\newtheorem{theorem}{Theorem}[section]
\newtheorem{proposition}[theorem]{Proposition}
\newtheorem{lemma}[theorem]{Lemma}
\newtheorem{definition}[theorem]{Definition}
\newtheorem{corollary}[theorem]{Corollary}
\newtheorem{example}[theorem]{Example}
\newtheorem{ex}[theorem]{Examples}
\newtheorem{remark}[theorem]{Remark}
\DeclareMathOperator{\Hom}{Hom}
\DeclareMathOperator{\RMod}{mod-}
\DeclareMathOperator{\End}{End}
\DeclareMathOperator{\Ent}{Ent}
\DeclareMathOperator{\Exit}{Exit}
\newcommand\floor[1]{\lfloor #1 \rfloor}
\newcommand{\newterm}{\textsf}
\newcommand{\congto}{\xto{\sim}}
\DeclareMathOperator{\im}{im}
\DeclareMathOperator{\Ext}{Ext}
\DeclareMathOperator{\Sh}{Sh}
\newcommand{\cO}{\mathcal{O}}
\renewcommand{\cong}{\simeq}
\newcommand{\A}{\mathbb{A}}
\newcommand{\C}{\mathbb{C}}
\newcommand{\D}{\mathbb{D}}
\newcommand{\R}{\mathbb{R}}
\newcommand{\G}{\mathcal{G}}
\newcommand{\Z}{\mathbb{Z}}
\newcommand{\F}{\mathcal{F}}
\newcommand{\T}{\mathbb{T}}
\renewcommand{\P}{\mathbb{P}}
\newcommand{\cP}{P}
\newcommand{\xto}{\xrightarrow}
\newcommand{\cF}{\mathcal F }
\newcommand{\bea}{\begin{eqnarray*} }
\newcommand{\eea}{\end{eqnarray*} }
\newcommand{\be}{\begin{equation} }
\newcommand{\ee}{\end{equation} }
\newcommand{\bp}{\begin{proposition}}
\newcommand{\ep}{\end{construction}}
\newcommand{\bpw}{\begin{construction}}
\newcommand{\epw}{\end{proposition}}
\newcommand{\bt}{\begin{maintheo}}
\newcommand{\et}{\end{maintheo}}
\newcommand{\bpf}{\begin{proof}}
\newcommand{\epf}{\end{proof}}
\newcommand{\bl}{\begin{lemma}}
\newcommand{\el}{\end{lemma}}
\newcommand{\bc}{\begin{corollary}}
\newcommand{\ec}{\end{corollary}}
\newcommand{\bd}{\begin{definition}}
\newcommand{\ed}{\end{definition}}
\newcommand{\bee}{\begin{eqnarray} }
\newcommand{\eee}{\end{eqnarray} }
\newcommand{\brem}{\begin{remark}}
\newcommand{\erem}{\end{remark}}
\newcommand{\bex}{\begin{example}}
\newcommand{\eex}{\end{example}}
\newcommand{\bma}{\begin{bmatrix}}
\newcommand{\ema}{\end{bmatrix}}
\newcommand{\bcs}{\begin{cases}}
\newcommand{\ecs}{\end{cases}}
\newcommand{\bcd}{\begin{tikzcd}}
\newcommand{\ecd}{\end{tikzcd}}
\newcommand{\RR}{\mathbb R}
\newcommand{\ZZ}{\mathbb Z}
\newtheorem{maintheo}{Theorem}
\theoremstyle{definition}
\theoremstyle{plain}
\begin{document}
\title[Resolutions via CCC]{Line bundle Resolutions via the Coherent-Constructible Correspondence} 
\author{David Favero and Mykola Sapronov}
\date{\today}
\address{University of Minnesota \\ School of Mathematics}
\email{favero@umn.edu}
\email{sapro003@umn.edu}
\maketitle

\begin{abstract}
We consider a finite collection of line bundles  $\Phi$  introduced by Bondal on a smooth, projective toric variety $X$. For any coherent sheaf $F$ on $X$, we construct minimal resolutions of $F$ by line bundles in $\Phi$, up to twist, with length bounded by the dimension of $X$ and provide explicit formulae for their Betti numbers. For a toric subvariety $Y \subset X$ of codimension $k$, we give a construction of the minimal resolution of $f_{*}\mathcal{O}_{Y}$ of length $k$ by line bundles in $\Phi$ and relate their Betti numbers to the topology of a stratified real torus. Additionally, we recover a (generally non-minimal) cellular resolution of $f_{*}\mathcal{O}_{Y}$ constructed in \cite{HHL}. Aspects of our proof run through the Coherent Constructible Correspondence, a form of homological mirror symmetry for toric varieties. 
\end{abstract}

\tableofcontents

\section{Introduction}
Homological algebra and resolutions are now fundamental aspects of algebraic geometry, commutative algebra, and topology.
Inspired by ideas from homological mirror symmetry, this paper explores how resolutions of coherent sheaves on a smooth projective toric variety can be described using resolutions of constructible sheaves on the mirror real torus.  

The idea of constructing resolutions from a blend of topological and combinatorial methods is not new. Notably, the pioneering work of Bayer, Strumfels, and Peeva demonstrated how free resolutions of monomial ideals come from certain naturally arising regular cell complexes. A consequence of this perspective is that it allows one to express algebraic Betti numbers in terms of topological Betti numbers coming from reduced homology.  
 
Another place one finds connections between algebra, algebraic geometry, topology, and symplectic geometry is mirror symmetry.  A homological manifestation of this phenomenon is captured by the Coherent-Constructible Correspondence \cite{FLTZ}.  It provides a categorical equivalence between complexes of coherent sheaves on toric varieties and complexes of constructible sheaves on the mirror torus.  This begs the question of whether one can use this correspondence to transfer resolutions from one side to the other.

The answer is of course yes, as we will demonstrate.  However, the CCC is not entirely topological as the constructible sheaf categories involve the notion of singular support which requires a smooth structure. To work purely topologically, we consider a larger category which allows more singular support.  This category can be identified with the category of constructible sheaves on a stratified torus due to Bondal \cite{B}. It admits a nice description in terms of modules over an explicit finite dimensional algebra \cite{HPA}. As such, it has very well-understood homological properties and algorithmic constructions for projective resolutions.  Furthermore, the topological nature of these algebras bounds their projective dimension \cite{Rouquier}. One almost immediate consequence is the following:

\begin{theorem}\label{main theorem 1}
Any coherent sheaf on a smooth projective $n$-dimensional toric variety admits a minimal resolution of length at most $n$ by sums of line bundles.
\end{theorem}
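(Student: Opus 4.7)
The plan is to transport the problem across the Coherent-Constructible Correspondence into a purely algebraic question about modules over a finite-dimensional algebra, where minimal projective resolutions exist and have bounded length.

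First I would fix Bondal's finite collection $\Phi$ of line bundles on the smooth projective toric $n$-fold $X$ and, on the mirror side, identify the corresponding objects in the enlarged constructible category on the stratified real torus described in the introduction. By \cite{B} and \cite{HPA}, this enlarged constructible category is equivalent to the category of modules over an explicit finite-dimensional algebra $A$, and Bondal's line bundles in $\Phi$ should correspond precisely to the indecomposable projective $A$-modules (this should be checked, but is essentially the content of the CCC combined with the identifications recalled above). Under this equivalence, the question of resolving a coherent sheaf $F$ by direct sums of line bundles in $\Phi$ translates into the question of resolving the associated $A$-module by projectives.

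Next I would invoke the standard existence of minimal projective resolutions over a finite-dimensional algebra: every finitely generated $A$-module $M$ admits a unique (up to isomorphism) minimal projective resolution, obtained inductively by choosing projective covers of the successive syzygies. This produces a minimal resolution of $F$ by sums of line bundles in $\Phi$ after transporting back. To bound its length by $n$, I would cite Rouquier's result that the global dimension of the algebra $A$ is controlled by the topological dimension of the underlying stratified space, which here is the real $n$-torus. Thus $\op{gldim}(A) \leq n$, so every minimal projective resolution terminates in at most $n$ steps.

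Finally, I would transport the resolution back through the CCC to obtain the claimed minimal resolution of $F$ on $X$. The main obstacle I anticipate is not the existence or length bound, but justifying carefully that (i) the CCC sends Bondal's line bundles to the indecomposable projectives of $A$, so that projective resolutions on one side genuinely correspond to line bundle resolutions on the other, and (ii) the notion of ``minimal'' is preserved under the equivalence (which should follow from the equivalence respecting the relevant notion of projective cover, since both sides share the same finite-dimensional algebra description). Once these compatibilities are in place, the theorem reduces to the two classical inputs above.
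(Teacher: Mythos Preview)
Your overall strategy is right in spirit but has a genuine gap at the crucial step. The enlarged constructible category you invoke (the one equivalent to $\RMod A$) is strictly larger than the CCC category $\Sh(\T^n,\Lambda_\Sigma)$: the inclusion $\Lambda_\Sigma \subseteq \Lambda_c$ is generally proper, so $D(\Coh X)\simeq \Sh(\T^n,\Lambda_\Sigma)$ is only a full subcategory of $D(\RMod A)$, not equivalent to it. Consequently, ``the associated $A$-module'' of a coherent sheaf $F$ is in general a complex in $D(\RMod A)$, not an honest module concentrated in degree $0$. Without purity you cannot simply take a minimal projective resolution in the abelian category $\RMod A$, and your argument stalls exactly here.

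The paper fixes this with one additional idea you are missing: twist $F$ by a sufficiently high power of an ample line bundle. With $T=\bigoplus_{[a]\in\im\Phi_c}\mathcal O(-a)$, Serre vanishing forces $R\Hom(T,F\otimes\mathcal L^{N})$ to have cohomology only in degree $0$, i.e.\ it is an honest $A$-module $M$. Now your plan works: take the minimal projective resolution of $M$ (length $\leq n$ by the bound on $\op{gldim} A$), apply $-\otimes_A T$ to obtain a line bundle complex, and use that $R\Hom(T,-)$ is fully faithful (not an equivalence!) to conclude this complex is quasi-isomorphic to $F\otimes\mathcal L^N$; finally untwist. Note also that the passage back is via $-\otimes_A T$, which is not the inverse of the CCC but a left adjoint to the fully faithful $R\Hom(T,-)$; your phrase ``transport back through the CCC'' elides this distinction. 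Minimality survives because it is characterized on both sides by the absence of identity components in the differentials, which $-\otimes_A T$ visibly preserves on indecomposable projectives.
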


A major inspiration for this project is the recent work of Hanlon--Hicks--Lazarev \cite{HHL} who constructed cellular line bundle resolutions of structure sheaves of toric subvarieties.  In fact, their work can be used to recover the statement above albeit by larger resolutions than the ones produced here and hence, as they demonstrate it resolves a conjecture of Berkesch--Erman--Smith \cite[Question 6.5]{BES}.

It turns out that we can fully recover the Hanlon--Hicks--Lazarev resolutions using our framework.  We show they come from a natural cellular refinement of Bondal's stratification. More precisely, they can be obtained by considering minimal injective resolutions over the incidence algebra of this cellular refinement complex.  

Moreover, one need not work cellularly.  Instead, one can directly consider Bondal's original stratification and the associated finite dimensional algebra.  The benefit of this perspective is that it produces smaller, minimal resolutions which, as we show, are unique under certain conditions.  Although the differential is less well-behaved, the terms are explicitly controlled by compactly supported cohomology of the strata.   This is a direct generalization of the cellular complex of Hanlon-Hicks-Lazarev as the compactly supported cohomology of an open $k$-cell is concentrated in degree $k$.  Hence, the following can be interpreted as a minimized version of the Hanlon--Hicks--Lazarev theorem.

\begin{theorem}\label{main theorem 2}
Let $f: X_{\Sigma_1} \to X_{\Sigma_2}$ be a finite toric morphism of smooth projective toric varieties. The sheaf $f_{*}\cO_{X_{\Sigma_1}}$ admits a minimal resolution of length $\dim X_{\Sigma_2} - \dim X_{\Sigma_1}$ by sums of line bundles:
\[
0 \to \bigoplus_{[a] \in \im{\Phi}} \cO(-a)^{\oplus \beta_{k,-a}}
\to ... \to  \bigoplus_{[a] \in \im{\Phi}} \cO(-a)^{\oplus \beta_{0,-a}}
\to  u_{*}\cO_{X_{\Sigma_1}} \to 0 
\]
If we let $f: N_{1} \to N_{2}$ be the injective map of lattices associated to such a morphism, the terms of the resolution are controlled by the compactly supported cohomology of Bondal's stratification of the mirror torus intersected with the subtorus $V:= f^{\vee -1}(0) \subset \T^n$:
\[
\beta_{i,-a} = \dim(H^i_c(S^c_{{[a]}} \cap V )),
\]
where $\beta_{i, -a}$ is the multiplicity of the line bundle $\mathcal{O}_{X_{\Sigma}}(-a)$ in the ${i}$-th term of the resolution. 
\end{theorem}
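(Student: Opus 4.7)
The plan is to transport the problem to the constructible side via the Coherent-Constructible Correspondence, identify $f_{*}\cO_{X_{\Sigma_1}}$ with a concrete constructible sheaf supported on the subtorus $V = (f^\vee)^{-1}(0) \subset \T^n$, and then apply the finite dimensional algebra machinery already used for Theorem \ref{main theorem 1} to extract the length bound and the Betti number formula simultaneously.

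First, I would use the CCC to translate $f_{*}\cO_{X_{\Sigma_1}}$ into a constructible complex on the mirror torus $\T^{n}$ where $n = \dim X_{\Sigma_2}$. Since $f$ is a finite toric morphism coming from an injective lattice map $f : N_1 \hookrightarrow N_2$, the dual $f^\vee : M_2 \to M_1$ induces a surjection of real tori whose kernel is exactly $V$. I expect that, under CCC, the functor $f_*$ on the coherent side corresponds to proper pushforward along this surjection, applied to the constructible sheaf associated to $\cO_{X_{\Sigma_1}}$. Since $\cO_{X_{\Sigma_1}}$ on the smaller toric variety corresponds (up to the usual twist) to the constant sheaf on $\T^{n_1}$ equipped with its Bondal stratification, the resulting object on $\T^n$ should be the constant sheaf on $V$ (again stratified by the restriction of Bondal's stratification), possibly extended by zero and shifted.

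Next, passing to the finite dimensional algebra $A$ that describes the Bondal category via \cite{HPA}, the indecomposable projectives of $A$ are exactly the modules corresponding under CCC to the line bundles in $\Phi$. Thus computing a minimal projective resolution of the module $M$ associated to $f_{*}\cO_{X_{\Sigma_1}}$ and transferring back yields a minimal line bundle resolution. The length bound $k = \dim X_{\Sigma_2} - \dim X_{\Sigma_1}$ should arise because $M$ is supported on a submanifold of real codimension $k$, and the projective dimension of such a module is bounded by this codimension --- the same topological/Rouquier-style input that powers Theorem \ref{main theorem 1}, now applied to the stratified subtorus $V$ rather than all of $\T^n$. Sharpness of the length (not just an upper bound) should follow from the non-vanishing of top compactly supported cohomology of $V$ itself, giving a nonzero contribution in homological degree $k$.

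Finally, for the Betti number formula, I would compute $\beta_{i,-a} = \dim \Ext^{i}_{A}(P_{[a]}, M)$, where $P_{[a]}$ is the indecomposable projective corresponding to $\cO(-a)$. Standard homological algebra for the Bondal algebra expresses these Ext groups as compactly supported cohomology of the locally closed piece cut out by the stratum $S_{[a]}$ inside the support of $M$, which here is exactly $S^c_{[a]} \cap V$. This reproduces $\beta_{i,-a} = \dim H^{i}_{c}(S^c_{[a]} \cap V)$. The main obstacle I anticipate is the first step: making the identification between $f_{*}\cO_{X_{\Sigma_1}}$ and the (shifted, extended) constant sheaf on $V$ fully precise, with the correct grading lattice and the correct behavior of CCC under proper pushforward along a finite toric morphism. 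Once that identification is pinned down, the resolution, the length bound, and the Betti number formula should all drop out of the algebraic framework uniformly.
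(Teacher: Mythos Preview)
Your high-level strategy matches the paper's, but two points need correction. First, the Betti formula $\beta_{i,-a} = \dim\Ext^i_A(P_{[a]},M)$ cannot be right: $P_{[a]}$ is projective, so this vanishes for $i>0$. The correct statement uses the \emph{simple} module, $\beta_{i,-a} = \dim\Ext^i_A(M,S_{[a]})$. The paper then rewrites this via the Serre functor as $\Hom(S_{[a]}, S(M))^*$, and unwinding through exodromy, adjunction, and proper base change identifies it with $H^i_c(S^c_{[a]}\cap V)$.

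Second, the step you flag as the main obstacle is indeed the crux, and the fix is more delicate than you indicate. The functoriality of CCC gives $u^* \leftrightarrow \bar v_!$, hence by taking right adjoints $u_* \leftrightarrow \rho^R_{\Lambda_\Sigma}\circ\bar v^!$, not pushforward along $\bar v$ as you write; so the mirror of $u_*\mathcal{O}_{X_{\Sigma_1}}$ is $\rho^R_{\Lambda_\Sigma}(\bar v^! j_{\{0\}*}\C_{\{0\}})$. The naive move---push this through exodromy and resolve the resulting module---fails because that object is generally \emph{not} concentrated in degree $0$ (the paper remarks on this explicitly). The paper's solution is to instead use the bridge diagram involving the Serre functor and right wrapping only to $\Lambda_c$: set $M_{\Sigma_1} := S^{-1}\circ F\bigl(\rho^R_{\Lambda_c}(\bar v^! j_{\{0\}*}\C_{\{0\}})\bigr)$ and check directly that this \emph{is} pure. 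Purity is verified by computing $\Hom(I_{[a]},-)$ via co-probe sheaves and base change, which reduces to $H_c^*$ of \emph{exit spaces} intersected with lifts of $V$; these are closed convex sets, hence disjoint unions of closed balls, with compactly supported cohomology only in degree $0$. This geometric input is exactly what eliminates the ample twist $\mathcal{L}^{N}$ needed for a general sheaf in Theorem~\ref{main theorem 1}; without it your argument would only produce a resolution of some $f_*\mathcal{O}_{X_{\Sigma_1}}\otimes\mathcal{L}^{-N}$.
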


\begin{example}\label{ex: point min res}
Consider the case where $X_{\Sigma_1}$ is a point $p=(1:1:1)$ and $X_{\Sigma_2} = \P^2$. Bondal's stratification of $\T^2$ for $\P^2$ consists of 3 strata, labeled by line bundles $\mathcal{O}$, $\mathcal{O}(-1)$, and $\mathcal{O}(-2)$. The compactly supported cohomology of $S^c_{0}$ is concentrated in degree 0 and is 1 dimensional, of $S^c_{1}$ is concentrated in degree 1 and is 2 dimensional, and of $S^c_{2}$ is concentrated in degree 2 and is 1 dimensional. Hence, the minimal resolution of $p = (1:1:1) \in \P^2$ must contain one copy of $\mathcal{O}$ in its zeroth term, two copies of $\mathcal{O}(-1)$ in its first term, and one copy of $\mathcal{O}(-2)$ in the second term. Indeed, this describes the terms of the minimal Koszul resolution of the point in $\P^2$. 
\[
    \begin{tikzpicture}

\filldraw[black, thick](-6,-1)circle(2pt);

\filldraw[fill=blue!10!white, draw=black] (-2,-1) -- (0,-1) -- (-2,1)--(-2,-1);

\draw[black, thick](-2,-1)circle(2pt);
\filldraw[white, thick](-2,-1) circle (1pt);

\draw[black, thick](0,-1)circle(2pt);
\filldraw[white, thick](0,-1) circle (1pt);

\draw[black, thick](-2,1)circle(2pt);
\filldraw[white, thick](-2,1) circle (1pt);

\filldraw[fill=red!10!white, draw=black,dashed] (3,1)--(5,1)--(5,-1)--(3,1);

\draw (-6,-0.5) node{$S^c_{0}$};
\draw (-1.3,-0.3) node{$S^c_{1}$};
\draw (4.3,0.3) node{$S^c_{2}$};

    \end{tikzpicture}
\]

\[
    		{\begin{tikzpicture}
            \node (v0) at (-10,1.5) {$0$};
            \node (v1) at (-8,1.5) {$\mathcal{O}(-2)$};
            \node (v2) at (-4.5,1.5) {$\mathcal{O}(-1)^2$};
            \node (v3) at (-1,1.5) {$\mathcal{O}$};
            \node (v4) at (0.5,1.5) {$\mathcal{O}_p$};
            \node (v5) at (2,1.5) {$0$};

            \draw[->]  (v0) edge 
            (v1);            
             \draw[->]  (v1) edge node[above=2pt]{$\begin{pmatrix} x_1 - x_2 \\ x_1 -x_0 \end{pmatrix}$}
            (v2);
            \draw[->]  (v2) edge node[above=2pt]{$\begin{pmatrix} x_0 - x_1 & x_1 - x_2 \end{pmatrix}$} (v3);
            \draw[->]  (v3) edge (v4);
            \draw[->]  (v4) edge
            (v5);
            
  \end{tikzpicture} }
	\]
\end{example}

It is also worth noting that unlike the argument of Hanlon--Hicks--Lazarev for \Cref{main theorem 1}, our approach does not rely on a line bundle resolution of the diagonal. Instead, it coincides with the original argument of Berkesch-Erman-Smith \cite{BES} that uses Beilinson's resolution of the diagonal and hence recovers these minimal resolutions in the case of projective space.  Moreover, our resolution comes with an explicit formula for Betti numbers, allowing one to relate sheaf cohomology to syzygies. We expect that this could be applied to obtain further interesting results relating notions of multigraded Castelnuovo-Mumford regularity and other positivity conditions to syzygies (see \Cref{positivity corrolary}).

\subsection{Outline}
In \S\ref{section 2} we introduce the notion of a stratified space and give  constructions of the cube and CW stratifications of tori coming from toric geometry. We then proceed with describing the category of sheaves constructible with respect to the cube and CW stratifications in terms of quiver path algebras with relations. In \S\ref{Section 3}, we recount some ideas from homological mirror symmetry and specifically the Coherent-Constructible-Correspondence (CCC) along with its functorial properties.  We then relate the category of constructible sheaves from \S\ref{section 2} to the category of constructible sheaves appearing in the CCC. The section culminates in \Cref{theorem: bigdiagram}, relating various categories and serving as the bridge through which we transfer resolutions from topology to algebraic geometry. 
In \S\ref{Section 4} we utilize the results of \S\ref{section 2} and \S\ref{Section 3} to prove \Cref{main theorem 1} and \Cref{main theorem 2}. The first part of \S\ref{Section 4} describes general resolutions; the second is devoted to showing the existence of minimal resolutions of toric subvarieties by line bundles; the third explains how to construct these resolutions along with the ones obtained in \cite{HHL}.

\subsection{Acknowledgement} Our work was greatly influenced by discussions during the \emph{Syzygies and Mirror Symmetry Workshop} at the American Institute for Mathematics.  We are grateful for their hospitality, financial support, and wonderful environment. This project was financially supported by the National Science Foundation DMS \#2302262.   We also thank Christine Berkesch, Andrew Hanlon, and Jesse Huang for helpful discussions. 
\subsection{Notation and conventions}

\begin{itemize}
    \item $\RMod A$ is the category of  right $A$ modules. 
    \item $Q$ is a finite quiver with path algebra $kQ$. $Q_0$ is the set of its verticies and $Q^{ij}_1$ is the edge set between $i$ and $j$. We consider the multiplicative structure defined by
    $h \cdot h'= 0$ unless $t(h) = s(h')$, where t is the target of $h$ and s is $h'$'s source.  
    \item For $A = kQ/R$, where R is an ideal generated by relations $R$, we denote by $P_i = e_iA$ the indecomposable projectives, $I_i = \D(Ae_i)$ the indecomposable injectives, and by $S_i$ the simple modules in the category $\RMod A$.
    \item For any $T \in Ab$, the multiplicative structure on the endomorphism ring $\End(T,T)$ is defined $f \cdot g = f \circ g$
    \item $D_c(X)$ is the bounded derived category of constructible sheaves on $X$; $\Sh(X, \Lambda)$ is the full subcategory of $D_c(X)$ consisting of objects whose singular support lies in $\Lambda$.
\end{itemize}

\section{Poset Stratification of Tori and Exodromy}\label{section 2}
The goal of this section is twofold. First, we introduce the notion of a poset stratified space and give two explicit examples which stratify a real $n$-dimensional torus, called  the cube and CW stratification respectively. One can think of a poset stratification as a decomposition of a topological space into a disjoint union of subspaces (called strata), each labeled by an element of the poset, together with a coherence condition relating the topology of the space with the poset ordering.
In the second part of this section, we turn to the study of the category of sheaves on the real torus that are constructible with respect to cube and cell stratifications. We show that there is a natural collection of generators, which are called probe sheaves, that allow us to identify the constructible category at hand with the category of modules over a certain quiver algebra with relations. We begin with the following definition.

\begin{definition}\label{stratdef} Let $X$ be a topological space. A \newterm{poset stratification} $S = (I,\Phi)$ of $X$ is a continuous map $\Phi: X \rightarrow I$ where $I$ is a poset endowed with the Alexandrov topology (open sets are upward closed).  
\end{definition}
A poset stratification $(I, \Phi)$ 
yields a decomposition of $X$ into the disjoint union of the $\Phi$-level sets $S_\alpha := \Phi^{-1}({\alpha})$.
\[
X = \coprod_{\alpha \in \im(\Phi)} S_\alpha.
\]

\begin{definition}
We call a sheaf $\cF$ on $X$ \newterm{ $S$-constructible} if $\cF|_{S_{\alpha}}$ is a locally constant sheaf of finite rank for all $\alpha \in I$. The (abelian) category of $S$-constructible sheaves on X is denoted $Sh_S(X)$.  We call it the \newterm{$S$-constructible category}.
\end{definition}

\begin{example}\label{ex:cube}
Let $X = \RR^m, I = \ZZ^m$ (with the standard poset structure), and $\Phi = -\floor{\:} $ the floor function.  One checks that this floor function is continuous when $\ZZ^m$ has the Alexandrov topology.  The corresponding stratification $(I,\Phi)$ decomposes $\RR^m$ into half-open cubes:
\[
\RR^m = \coprod_{a \in \mathbb Z^{m}} a+[0,1)^m.
\]
\end{example}

The stratification \Cref{ex:cube} above is crucial in what follows for constructing stratifications appearing in toric mirror symmetry.

\subsection{Cube Stratification of Tori}
\label{subsec: cube}
We now give a construction of a poset stratification of an $n$-dimensional real torus. We produce a poset $I$ and a continuous map $\Phi: \mathbb{T}^n \rightarrow I$ from the stratification of $\R^{n+k}$ defined as in \Cref{ex:cube} and a short exact sequence of lattices: 
 \begin{equation} \label{eq: exact}
     0\rightarrow \mathbb{Z}^n \xrightarrow{\it{i}} \mathbb{Z}^{n+k} \xrightarrow{\pi} \mathbb{Z}^{n+k}/\mathbb{Z}^n \rightarrow 0.
 \end{equation}

Since $\mathbb{R}$ is flat, we can tensor \eqref{eq: exact} by $\mathbb{R}$ to get an exact sequence:
 \begin{equation}
     0\rightarrow \mathbb{R}^n \xrightarrow{\it{i_\mathbb{R}}} \mathbb{R}^{n+k} \xrightarrow{\pi_\mathbb{R}} \mathbb{R}^{n+k}/\mathbb{R}^n \rightarrow 0.
 \end{equation}
To produce this stratification, we require that $\pi_\mathbb{R}(\mathbb{R}^{n+k}_{\geq0})$ (called the effective cone) is strongly convex in $\mathbb{R}^{n+k}/\mathbb{R}^n$ and consists of no non-zero vectors. In toric geometry, this condition is equivalent to requiring that the Picard short exact sequence comes from a proper space. So we assume this in the discussion which follows.
 
Endow $\mathbb{Z}^{n+k}/\mathbb{Z}^n$ with the following poset structure (without the strongly convex assumption above, this relation would only give a preorder):
 
\begin{equation}
[a]:=\pi(a) \geq [b] \: \text{iff} \: [a-b] \in \pi([0,\infty)^{n+k})
\end{equation}

We consider a continuous map:
\begin{align}
\widetilde{\Phi_c}:  \mathbb{R}^n \xrightarrow{\it{i_{\mathbb{R}}}} \mathbb{R}^{n+k} \xrightarrow{-\floor{\:}} \mathbb{Z}^{n+k}\xrightarrow{\pi} \mathbb{Z}^{n+k}/\mathbb{Z}^n
\end{align}

The map $-\floor{\:}$ is the flooring map from \ref{ex:cube}, and $\pi$ is the projection map (which is order preserving and hence continuous in the Alexandrov topology). Consider the natural action of $\mathbb{Z}^n$ on $\mathbb{R}^n$ by translation. It is a free action, and $\widetilde{\Phi}$ is constant on $\mathbb{Z}^n$-orbits, so it descends to a continuous map from the quotient:  

\begin{align}
\Phi_c: \mathbb{T}^n = \mathbb{R}^n / \mathbb{Z}^n \rightarrow \mathbb{Z}^{n+k}/\mathbb{Z}^n
\end{align}

We denote the stratification of $\T^n$ above by $S^c:=(\mathbb{Z}^{n+k}/\mathbb{Z}^n, \Phi_c$). The superscript is motivated by the following proposition, which shows that every stratum of $S^c$ is homeomorphic to an intersection of a half-open cube with the closed subspace that is the image of $\it{i_{\mathbb{R}}}$ in $\mathbb{R}^{n+k}$. Henceforth, we call stratification $S^c$ a cube stratification. 

\begin{proposition}\cite[Lemma 5.4]{HPA}\label{homeobt}
   There exists a canonical homeomorphism $f: S^{c}_{a}:= (-a+[0,1)^{n+k}) \cap \it{i_{\mathbb{R}}(\mathbb{R}}^{n}) \rightarrow S^c_{[a]}$. Furthermore, $\im (\Phi_c) = \pi_{\mathbb{R}}([0,1)^{n+k}) \cap \mathbb{Z}^{n+k}/\mathbb{Z}^n$ (i.e it is a finite set)
\end{proposition}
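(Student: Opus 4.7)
The plan is to define $f(y) := p(i_\R^{-1}(y))$, where $p \colon \R^n \to \T^n$ is the covering quotient. Well-definedness follows from a direct unpacking: for $y = i_\R(\tilde{x}) \in -a + [0,1)^{n+k}$ one has $\lfloor i_\R(\tilde{x})\rfloor = -a$ componentwise, so $\Phi_c(p(\tilde{x})) = \pi(a) = [a]$. Surjectivity onto $S^c_{[a]}$ is obtained by lifting any $x \in S^c_{[a]}$ to some $\tilde{x} \in \R^n$, writing $-\lfloor i_\R(\tilde{x})\rfloor = a + i(m)$ for a (unique) $m \in \Z^n$, and replacing the lift by $\tilde{x} + m$, so that $i_\R(\tilde{x} + m) \in -a + [0,1)^{n+k}$. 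Injectivity is the key calculation: if $\tilde{x}_1, \tilde{x}_2 \in i_\R^{-1}(S^c_a)$ satisfy $p(\tilde{x}_1) = p(\tilde{x}_2)$, then $i_\R(\tilde{x}_1) - i_\R(\tilde{x}_2) \in i(\Z^n) \subset \Z^{n+k}$ while simultaneously lying in $(-1,1)^{n+k}$ since both points belong to the same half-open unit cube $-a + [0,1)^{n+k}$; the difference must therefore vanish.

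The hard step is upgrading this continuous bijection to a homeomorphism in the subspace topologies inherited from $\R^{n+k}$ and $\T^n$. I would argue that $p|_{i_\R^{-1}(S^c_a)}$ is a local homeomorphism, which is equivalent to showing that no $\Z^n$-translate $i_\R^{-1}(S^c_a) + m$ with $m \neq 0$ has a limit point in $i_\R^{-1}(S^c_a)$. Comparing $-a + [0,1)^{n+k}$ with the closed cube $-a + i(m) + [0,1]^{n+k}$ coordinatewise forces $i(m) \in \{-1,0\}^{n+k}$. Here the projectivity assumption on the toric variety enters decisively: for a complete fan the rays positively span $N_\R$, equivalently $i_\R(\R^n) \cap \R^{n+k}_{\geq 0} = \{0\}$, which forbids any nonzero $m$ with $-i(m) \in \{0,1\}^{n+k}$. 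With accumulation from other translates ruled out, each $\tilde{y} \in i_\R^{-1}(S^c_a)$ admits a Euclidean neighborhood disjoint from all translates with $m \neq 0$, yielding the local homeomorphism and hence that $f$ is a homeomorphism onto $S^c_{[a]}$.

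For the second claim, decompose $i_\R(\tilde{x}) = \lfloor i_\R(\tilde{x})\rfloor + \{i_\R(\tilde{x})\}$ with fractional part in $[0,1)^{n+k}$. Since $i_\R(\tilde{x}) \in \ker \pi_\R$, applying $\pi_\R$ yields $\Phi_c(p(\tilde{x})) = \pi(-\lfloor i_\R(\tilde{x})\rfloor) = \pi_\R(\{i_\R(\tilde{x})\})$, which lies in $\pi_\R([0,1)^{n+k}) \cap (\Z^{n+k}/\Z^n)$. Conversely, given any $[a]$ in this intersection, lift it to $v \in [0,1)^{n+k}$ with $\pi_\R(v)$ equal to the image of $[a]$ in $\R^{n+k}/\R^n$; choosing a representative $a' \in \Z^{n+k}$ of $[a]$ with $a' - v \in i_\R(\R^n)$ (possible by surjectivity of $\pi$) produces a $\tilde{x}$ realizing $[a]$. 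Finiteness is automatic: $\pi_\R([0,1)^{n+k})$ is bounded in $\R^{n+k}/\R^n$, while the image of $\Z^{n+k}/\Z^n$ there is a discrete lattice (its kernel being the finite torsion subgroup), so the intersection contains only finitely many classes.
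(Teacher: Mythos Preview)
Your proof is correct. The paper does not supply its own argument for this proposition; it simply cites \cite[Lemma 5.4]{HPA}, so there is nothing to compare against directly.

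A couple of minor remarks on presentation. First, your reduction of the homeomorphism step to the ``no accumulation'' condition is right in spirit but stated a bit loosely: what you actually need is continuity of the inverse, and the cleanest way to see it is via sequences. If $p(\tilde x_n)\to p(\tilde x)$ in $S^c_{[a]}$ with $\tilde x_n,\tilde x\in i_\R^{-1}(S^c_a)$, lift the convergent sequence through the covering map to obtain $\tilde z_n\to\tilde x$ with $\tilde z_n-\tilde x_n=:m_n\in\Z^n$; then exactly your coordinate estimate forces $i(m_n)\in\{-1,0\}^{n+k}$ (or $\{0,1\}^{n+k}$, depending on which side you take closures), and completeness gives $m_n=0$. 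This is the same computation you wrote, just packaged so that the conclusion ``$f^{-1}$ is continuous'' is immediate. Second, your appeal to the condition $i_\R(\R^n)\cap\R^{n+k}_{\geq 0}=\{0\}$ is exactly the hypothesis the paper imposes (phrased there as strong convexity of the effective cone, equivalent to completeness of the fan), so you are invoking precisely the right assumption at the right moment. Finally, in the finiteness argument your parenthetical about a ``finite torsion subgroup'' is unnecessary in the smooth projective setting, where $\Z^{n+k}/\Z^n$ is already free; the intersection of a bounded set with a lattice is finite without further comment.
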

 
\begin{example}\label{P2strata}
Consider a cube stratification of $\T^2$ coming from the short exact sequence:   
\[
		\scalebox{.75}{\begin{tikzpicture}
            \node (v1) at (-8,1.5) {$0$};
            \node (v2) at (-6.5,1.5) {$\Z^2$};
            \node (v3) at (-2.5,1.5) {$\Z^3$};
            \node (v4) at (1.5,1.5) {$\Z$};
            \node (v5) at (3,1.5) {$0$};
             \draw[->]  (v1) edge
            (v2);
            \draw[->]  (v2) edge node[fill=white]{$\begin{pmatrix} 0 & 1\\ 1 & 0\\ -1& -1\end{pmatrix}$} (v3);
            \draw[->]  (v3) edge node[fill=white]{$\begin{pmatrix} 1& 1& 1 \end{pmatrix}$}(v4);
            \draw[->]  (v4) edge
            (v5);
            
  \end{tikzpicture} }
	\]
The $\im(\Phi_c) = {0,1,2} \in \Z$ with the poset relation $0 < 1 < 2$. Note that the stratum corresponding to the maximal element $2 \in \im (\Phi_c)$, is open and the stratum corresponding to the minimal element, $0$, is closed as expected from the definition of Alexandrov topology.  

\[
    \begin{tikzpicture}
\draw[step=0.85cm,gray,very thin](-8,-1.5) grid (7,1.25);

\filldraw[black, thick](-6,-1)circle(2pt);

\filldraw[fill=blue!10!white, draw=black] (-2,-1) -- (0,-1) -- (-2,1)--(-2,-1);

\draw[black, thick](-2,-1)circle(2pt);
\filldraw[white, thick](-2,-1) circle (1pt);

\draw[black, thick](0,-1)circle(2pt);
\filldraw[white, thick](0,-1) circle (1pt);

\draw[black, thick](-2,1)circle(2pt);
\filldraw[white, thick](-2,1) circle (1pt);

\filldraw[fill=red!10!white, draw=black,dashed] (3,1)--(5,1)--(5,-1)--(3,1);

\draw (-6,-0.5) node{$S^c_{0}$};
\draw (-1.3,-0.3) node{$S^c_{1}$};
\draw (4.3,0.3) node{$S^c_{2}$};

    \end{tikzpicture}
\]

\end{example}

\subsection{CW Stratification of Tori}
\label{subsec: cell}
We now give another stratification of the torus which refines $S^c$. By \Cref{homeobt} the strata of $S^c$ are homeomorphic to half open cubes intersected with a closed linear subspace. One can decompose a half-open cube into a disjoint union of open cells (subspaces homeomorphic to open balls) and then consider the cell intersections with the linear subspace, giving a more refined stratification. To give some preliminary intuition, the stratification we construct in this section would further decompose the stratum $S^c_1$ from \Cref{P2strata} into four cells: $3$ one-dimensional cells (the sides of the blue triangle) and $1$ two-dimensional open cell (the interior of the blue triangle). As expected, we show this would endow the torus with a CW structure. We now carefully write down the construction of this stratification. 

Consider a CW structure on $\mathbb{R}^m$ where the $0$-skeleton is the set of points with integer coefficients, the $1$-skeleton is the set of unit length open intervals attaching adjacent vertices, the $2$-skeleton is the set of open squares bounded by 4 $1$-cells, etc. The CW cell decomposition of $\mathbb{R}^m$ at hand is the refinement of the decomposition from \Cref{ex:cube}: 

\[ \mathbb{R}^m = 
\coprod_{a \in \mathbb Z^{m}}(a+[0,1)^m) = \coprod_{a \in \mathbb Z^{m}, J \subseteq \{1,..., m\}} \prod_{i \in J} (a+(0, 1)) := \coprod_{a \in \mathbb Z^{m}, J \in 2^{\{1,..., m\}}} e_{(a,J)}
\]

The set of cells in the CW-complex, denoted $I := \mathbb{Z}^m \times 2^{\{1,..., m\}}$ admits a poset structure. There are two ways to define it. Thinking of cells as subsets in $\mathbb{R}^m$, we give the cells the standard CW poset structure where the poset relation coincides with CW boundaries.  Notationally this is given by: \[  (a,J)\geq (a',J') \Leftrightarrow  e_{(a',J')} \subset \overline{e_{(a,J)}}   \]     
Alternatively, thinking of cells as just elements of $I$, we can write:
\begin{equation} \label{eq: cell boudnary}
(a,J) \geq (a',J') \Leftrightarrow {a' = a + \sum_{\ k \in N \subset \{1,..., m\}} e_k, N \subseteq J, J' \subseteq J\setminus N},
\end{equation}
where $e_i$ is a standard basis for $\mathbb{R}^m$.  

In order to make this stratification compatible with the cube stratification, we have to dualize the poset $I$ by introducing a negative sign. Hence, we alter the poset relation on $I$ by declaring:

\[
(a,J): \geq (a',J') \Leftrightarrow (-a,J) \geq (-a',J')
\]

\begin{definition} \label{prop: cell strat}
Let $f: \mathbb{R}^m \rightarrow I $ be a containment map which sends a point x in $\mathbb{R}^m$ to the cell which contains $-x$. Then, $(I,f)$ is a poset stratification of $\mathbb{R}^n$ in a sense of Def. \ref{stratdef}. We call it the \newterm{CW stratification} of $\mathbb{R}^m$.  
\end{definition}

\begin{remark}\label{CWstrat}
Given any CW complex $X$, one can similarly consider its CW stratification 
\[
\Phi: X \to I
\]  
where $I$ is the cell poset of $X$ (defined as \cite[Definition 4.1.1]{Cur}) and $\Phi$ sends any point to the cell in which it is contained.
\end{remark}

We now adapt the construction of a cube stratification of tori to produce a CW stratification. We start with the CW stratification of $\mathbb{R}^{n+k}$ that we constructed earlier in this section together with a short exact sequence of lattices. Again, we require the effective cone to be strongly convex and map $\pi$ to have no zero vectors.  
\begin{equation}
     0\rightarrow \mathbb{Z}^n \xrightarrow{\it{i}} \mathbb{Z}^{n+k} \xrightarrow{\pi} \mathbb{Z}^{n+k}/\mathbb{Z}^n \rightarrow 0.
 \end{equation}

 First, we make sense of   $I/\mathbb{Z}^n$. We consider an action of $\mathbb{Z}^n$ on $I = \mathbb{Z}^{n+k} \times 2^{\{1,..., n\}}$ by first including an element of $\mathbb{Z}^n$ in $\mathbb{Z}^{n+k}$ via $\it{i}$ and then acting by translation on the $\mathbb{Z}^{n+k}$ component of $I$. The quotient $I/\mathbb{Z}^n$ is given a poset structure: 

 \begin{equation}
[(a,J)]:=(\pi(a), J) \geq [(b,J')] \: \text{iff} \: (a,J) \geq (b + m, J')\: \text{for some} \: m \in \ \it{i}(\mathbb{Z}^n)
\end{equation}

We then have a continious map:
\begin{align}
\widetilde{\Phi_{cw}}: \mathbb{R}^n \xrightarrow{\it{i_{\mathbb{R}}}} \mathbb{R}^{n+k} \xrightarrow{f} I \xrightarrow{\pi} I/\mathbb{Z}^n
\end{align} 

Since $\widetilde{\Phi}$ is constant on $\mathbb{Z}^n$-orbits, we have: 
\begin{align}
{\Phi_{cw}}:  \mathbb{T}^n = \mathbb{R}^n/\mathbb{Z}^n \rightarrow I/\mathbb{Z}^n
\end{align}

We denote the stratification of $\mathbb{T}^n$ above by $S^{cw} = (I/\mathbb{Z}^n, \Phi_{cw})$. We have the following analog of \Cref{homeobt} for stratification $S^{cw}$, motivating its name.

\begin{proposition}\label{prop: CW stratification of T^n}
$S^{cw}$ is a CW stratification (in the sense of \Cref{CWstrat}) of a finite CW complex.
\end{proposition}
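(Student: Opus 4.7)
The strategy is to exhibit a finite CW decomposition of $\T^n$ whose cell poset, via the containment map of \Cref{CWstrat}, recovers $\Phi_{cw}$. Each representative $(a,J) \in I$ determines an open cube $e_{(a,J)} \subset \R^{n+k}$, and the corresponding stratum $\Phi_{cw}^{-1}([(a,J)]) \subset \T^n$ is the image of $T_{(a,J)} := i_\R^{-1}(-e_{(a,J)}) \subset \R^n$ under the quotient map. Since $-e_{(a,J)}$ is open convex in $\R^{n+k}$ and $i_\R$ is an affine injection, each non-empty $T_{(a,J)}$ is convex and relatively open in its affine span, hence homeomorphic to an open ball of some dimension. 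So every non-empty stratum is, a priori, an open cell.

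To assemble these into a CW structure on $\R^n$, I would verify that the closures $\overline{T_{(a,J)}}$ are bounded convex polytopes whose relative boundary decomposes as a disjoint union of lower-dimensional cells $T_{(a',J')}$ with $(a',J') < (a,J)$ in the face poset \eqref{eq: cell boudnary}. The sublattice $i(\Z^n) \subset \Z^{n+k}$ acts on $\R^{n+k}$ by translation, permuting the cubes, so it permutes the cells $T_{(a,J)}$ of the decomposition of $\R^n$ by translation; thus the decomposition descends to $\T^n = \R^n/\Z^n$. Moreover, each $T_{(a,J)}$ projects homeomorphically onto its image in $\T^n$: if two distinct points of $T_{(a,J)}$ differed by a non-zero element of $\Z^n$, then their $i_\R$-images would lie in $-e_{(a,J)}$ and differ by a non-zero element of $i(\Z^n) \subset \Z^{n+k}$, contradicting disjointness of open cubes. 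Finiteness of the cell count is immediate because $i_\R([0,1)^n)$ is a bounded subset of $\R^{n+k}$ and hence meets only finitely many cubes; alternatively, \Cref{homeobt} bounds the coarser cube stratification and each cube stratum subdivides into finitely many cells.

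Finally, I would match the resulting CW cell poset with $I/\Z^n$. The cube-complex face relation $e_{(a',J')} \subset \overline{e_{(a,J)}}$ is exactly \eqref{eq: cell boudnary}, and closure containment is preserved by intersection with the closed affine subspace $i_\R(\R^n)$, so $\Phi_{cw}$ coincides with the containment map of \Cref{CWstrat}. The main obstacle is the polyhedral identity used in the second step: for each open cube one must check that the relative boundary of $i_\R(\R^n) \cap (-e_{(a,J)})$ is precisely the disjoint union of the lower-dimensional intersections $i_\R(\R^n) \cap (-e_{(a',J')})$ with $(a',J') < (a,J)$. This ensures that no relative-boundary point of $T_{(a,J)}$ escapes the subdivision and no cell straddles two cubes, and requires some care when $i_\R(\R^n)$ sits tangentially to a face of $e_{(a,J)}$ or contains lattice points of $\Z^{n+k}$.
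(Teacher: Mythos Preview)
Your proposal is correct and follows essentially the same approach as the paper: both identify the strata as intersections $i_\R^{-1}(e_{(a,J)})$ of the linear subspace with open cubes in $\R^{n+k}$, invoke convexity to see these are open balls, and use the quotient map $\pi$ restricted to the closure of each such cell as the characteristic map. The paper is terser---it cites \cite[Lemma 5.4]{HPA} for the fact that $\pi$ restricts to a homeomorphism on each cell and asserts closure finiteness and weak topology ``from the definition,'' whereas you spell out the injectivity-on-cells argument directly and flag the polyhedral boundary decomposition as the step needing care; but the underlying argument is the same.
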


\begin{proof}
Arguing as in \cite[Lemma 5.4]{HPA}, the universal cover map restricts to a homeomorphism:
\[
\pi: \it{i^{-1}_{\R}}(e_{(a,I)}) \to S^{cw}_{[(a,I)]} 
\]
In other words, every open cell of the CW stratification of $\T^n$ is given by intersecting an open CW cell of $\R^{n+k}$ with the hyperplane arrangement $\it{i_{\mathbb{R}}(\mathbb{R}}^{n})$. Since both sets are convex, their intersection is an open convex set and hence homeomorphic to an open ball. We show that this decomposition is a cellulation of $\T^n$. For every cell, the restriction of $\pi$ extends to the map of closures: 
\[
\pi_{[(a,I)]}: D \cong \overline{\it{i^{-1}_{\R}}(e_{(a,I)})} \to \overline{S^{cw}_{[(a,I)]}}.
\]
Since the closures are compact Hausdorff spaces, the image of $\pi_{[(a,I)]}$ is closed set containing $\it{i^{-1}_{\R}}(e_{(a,I)})$. Hence, it is a continuous surjection. The closure finiteness and weak topology follows from the definition of the stratification. 
\end{proof}

From this point on, whenever we make a statement involving a stratification $S$, we mean either cube or CW stratified torus. We will omit the superscript at $S$ (or $\Phi$) whenever the statement is true for both the cube and CW stratification (or the respective $S$-constructible category). Moreover, in the view of \Cref{prop: CW stratification of T^n}, when working with the stratification $S^{cw}$, we can equivalently think of the stratifying poset $I/\Z^n$ as a cell poset of the CW complex. Hence, we will often use the cell labelling on strata. 

\begin{example}\label{ex: CW stratification of P^2}
The decomposition of $\T^2$ by the strata of its CW stratification from \Cref{P2strata}. 

\[
    \begin{tikzpicture}

\filldraw[black, thick](-6,-1)circle(2pt);

\filldraw[fill=blue!10!white, draw=black,dashed] (-2,-1) -- (0,-1) -- (-2,1)--(-2,-1);

\draw[blue!40!white,thick] (-2.25,-1) -- (-2.25,1);

\draw[black, thick](-2.25,-1)circle(2pt);
\filldraw[white, thick](-2.25,-1) circle (1.5pt);

\draw[black, thick](-2.25,1)circle(2pt);
\filldraw[white, thick](-2.25,1) circle (1.5pt);

\draw[blue!40!white,thick] (0.25,-1) -- (-1.75,1);

\draw[black, thick](0.25,-1)circle(2pt);
\filldraw[white, thick](0.25,-1) circle (1.5pt);

\draw[black, thick](-1.75,1)circle(2pt);
\filldraw[white, thick](-1.75,1) circle (1.5pt);

\draw[blue!40!white,thick] (0,-1.25) -- (-2,-1.25);

\draw[black, thick](0,-1.25)circle(2pt);
\filldraw[white, thick](0,-1.25) circle (1.5pt);

\draw[black, thick](-2,-1.25)circle(2pt);
\filldraw[white, thick](-2,-1.25) circle (1.5pt);

\filldraw[fill=red!10!white, draw=black,dashed] (3,1)--(5,1)--(5,-1)--(3,1);

\draw (-6,-0.5) node{$S^{cw}_{(0,e_{\o})}$};
\draw (-1.3,-0.35) node{$S^{cw}_{(1,e_{123})}$};
\draw (-2.85,-0.35) node{$S^{cw}_{(1,e_{13})}$};
\draw (0,0) node{$S^{cw}_{(1,e_{12})}$};
\draw (-1,-1.7) node{$S^{cw}_{(1,e_{23})}$};

\draw (4.4,0.3) node{$S^{cw}_{(2,e_{123})}$};

    \end{tikzpicture}
\]

\end{example}

In defining the cube and CW startification of tori, we also defined the stratifications of their universal covers $\widetilde{S^c}: = (\widetilde{\Phi_c}, \mathbb{Z}^{n+k}/\mathbb{Z}^n)$ and $\widetilde{S^{cw}}: = (\widetilde{\Phi_{cw}}, I/\mathbb{Z}^n)$. We have the following nice property of  $\widetilde{S^{cw}}$.

\begin{proposition}\label{prop: R^n regular CW}
The decomposition of $\R^n$ into connected components of strata of $\widetilde{S^{cw}}$ endows $\R^n$ with the structure of a regular CW complex. 
\end{proposition}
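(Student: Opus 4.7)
The plan is to identify the connected components of strata of $\widetilde{S^{cw}}$ explicitly as convex open subsets, and then verify the defining axioms of a regular CW complex cell by cell.

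First, I would identify the components. For each $(a,J) \in I$, define
\[ C_{(a,J)} := i_{\R}^{-1}(e_{(a,J)}) \subset \R^n. \]
Since $i_{\R}$ is a linear embedding and $e_{(a,J)}$ is a convex open subset of $\R^{n+k}$, the set $C_{(a,J)}$ is a convex open subset of the affine subspace $L_{a,J} := i_{\R}^{-1}(\mathrm{aff}(e_{(a,J)}))$; in particular it is connected whenever non-empty. The stratum $\widetilde{\Phi_{cw}}^{-1}([(a,J)])$ is the disjoint union of the $C_{(a',J)}$ over all $\Z^n$-translates $(a',J)$ of $(a,J)$ in $I$, and distinct translates lie in disjoint cube cells of $\R^{n+k}$. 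Hence the non-empty $C_{(a',J)}$ are precisely the connected components of the strata of $\widetilde{S^{cw}}$.

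Second, I would verify the cell condition. A non-empty $C_{(a,J)}$ is a bounded convex open subset of $L_{a,J}$, hence homeomorphic to an open ball of dimension $d_{a,J} := \dim L_{a,J}$. Because $i_{\R}$ is a closed linear embedding, the closure satisfies $\overline{C_{(a,J)}} = i_{\R}^{-1}(\overline{e_{(a,J)}})$, a compact convex subset of $L_{a,J}$ with non-empty relative interior. By the classical fact that any compact convex body with non-empty relative interior is homeomorphic to a closed ball via a homeomorphism carrying its relative boundary onto a sphere, we obtain a characteristic map $\overline{D^{d_{a,J}}} \to \overline{C_{(a,J)}}$ which is a homeomorphism onto its image---precisely the regularity of the attaching map.

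Third, I would check the boundary decomposition and the combinatorial axioms. The closure $\overline{e_{(a,J)}}$ in $\R^{n+k}$ is the disjoint union of cube cells $e_{(a',J')}$ with $(a',J') \leq (a,J)$, so pulling back via $i_{\R}$ gives $\overline{C_{(a,J)}} = \bigsqcup C_{(a',J')}$, where the union runs over those $(a',J') \leq (a,J)$ for which the pullback is non-empty. Hence the closure of each cell is a finite union of cells in our decomposition. Closure-finiteness is inherited from the cube CW structure on $\R^{n+k}$, and the weak topology condition follows because the cover by cells is locally finite and $\R^n$ is locally compact.

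The main obstacle is the regularity step: one must invoke the convex-geometric fact that the closure of a bounded convex open set in Euclidean space is a topological closed ball with topological sphere as boundary, and verify that the resulting characteristic maps assemble into a valid CW decomposition. Once this is established, the closure-finiteness and weak topology axioms follow essentially formally from the corresponding properties of the ambient cube stratification of $\R^{n+k}$.
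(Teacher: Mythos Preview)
Your proposal is correct and follows essentially the same approach as the paper: both identify the connected components of strata as the preimages $i_{\R}^{-1}(e_{(a,J)})$, observe these are convex (hence open balls), and deduce regularity from the fact that the attaching maps are homeomorphisms onto compact convex bodies. Your argument is in fact more thorough than the paper's rather terse proof, explicitly verifying the closure description, closure-finiteness, and the weak topology axiom where the paper leaves these implicit.
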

\begin{proof}
We have the following: 
\[
\widetilde{S}_{[(a,I)]}^{cw} \cong \bigsqcup_{b \in \pi^{-1}([a])}{\it{i^{-1}_{\R}}(e_{(b,I)})}
\]
Hence, the connected components of the strata of $\R^{n}$ are homeomorphic to a disjoint union of open balls, indexed by the lifts of $[a]$. Moreover, since $\it{i_{\mathbb{R}}}$ is an inclusion, the cell attachment maps are given by homeomorphisms. Hence, the CW complex is regular.  
\end{proof}

\subsection{Stratifications of Subtori}
Given an $m$-dimensional linear subtorus $V$ of a cube or CW stratified torus, we may consider its induced stratification. Namely, we define its cube stratification to be $S^{V}_{c}:= (\Phi^{c} \circ i, \Z^{n+k}/\Z^{n})$, where $i$ is the inclusion of $V$ in $\T^n$. The strata of $V$ are given by intersections of $V$ with the strata of $\T^n$. The CW stratification is defined analogously. We will denote these stratifications by $S^{V}$.

\begin{example}\label{ex: stratified subtorus}
Consider a subtorus cut out by the equation $2x+3y=0$ inside a cube stratified torus from \Cref{P2strata}. The strata have multiple connected components - the strata $S^{V}_{1}$ and $S^{V}_{2}$ have 2 connected components

\[\begin{tikzpicture}[scale=1.2]

\filldraw[fill=blue!10!white, draw=black] (-2,-1) -- (0,-1) -- (-2,1)--(-2,-1);

\filldraw[fill=red!10!white, draw=black,dashed] (-2,1)--(0,1)--(0,-1)--(-2,1);

\draw[black,dashed] (-2,-0.99) -- (0,-0.99);
\draw[black,dashed] (-1.99,1) -- (-1.99,-1);

\draw[black, thick](-2,-1)circle(2pt);
\filldraw[black, thick](-2,-1) circle (2pt);

\draw[black, thick](0,-1)circle(2pt);
\filldraw[white, thick](0,-1) circle (2pt);

\draw[black, thick](-2,1)circle(2pt);
\filldraw[white, thick](-2,1) circle (2pt);

\draw[red, very thick] (-1.93, 0.98) -- (0,0.332);
\draw[blue, very thick] (-2, 0.332) -- (-1,0);  
\draw[blue, very thick] (-2, -0.334) -- (-0.08,-0.97); 
\draw[red, very thick] (-1, 0) -- (0,-0.334);

\draw[blue, thick](-2, 0.332)circle(2pt);
\filldraw[blue, thick](-2, 0.332) circle (2pt);

\draw[blue, thick](-2, -0.334)circle(2pt);
\filldraw[blue, thick](-2, -0.334) circle (2pt);
\draw[blue, thick](-1, 0)circle(2pt);
\filldraw[blue, thick](-1, 0) circle (2pt);

\draw (-1.2,-0.35) node{$1$};
\draw (-1.5,0.5) node{$1'$};
\draw (-1.85,-0.8) node{$0$};
\draw (-0.5,0.1) node{$2$};
\draw (-0.5,0.8) node{$2'$};

\end{tikzpicture}
\]
\end{example}

\subsection{Probe and Co-Probe Sheaves}
\label{subsec: probe}
We now turn to the study of the category of sheaves on the real torus that are constructible with respect to the cube and CW stratifications.

First we introduce one of the central notions of the paper - stalk (co)probe sheaves, which are natural generators of $S$-constructible sheaf categories \cite{Zh}. The name is descriptive:

\begin{definition}\label{probe}
We call the representative of the stalk functor at $v \in \mathbb{T}^n$, that is, $\mathcal P_v \in Sh_S(\mathbb{T}^n)$ satisfying
\[
\Hom_{Sh_S(\mathbb{T}^n)}(\mathcal P_v, F) = F_v,\  \forall F\in Sh_S(\mathbb{T}^n)
\]
the \newterm{stalk probe} at $v$.
Similarly, the co-representative of the stalk functor, denoted $\mathcal I_v$, is called a \newterm{co-probe} sheaf.
\[
\Hom_{Sh_S(\mathbb{T}^n)}(F, \mathcal I_v) = (F_v)^{*},\  \forall F\in Sh_S(\mathbb{T}^n)
\]

\end{definition}

\begin{remark}
Since the stalk functor is exact, the probe sheaves are projective objects and co-probe sheaves are injective objects in $Sh_{S}(\mathbb{T}^n)$.
\end{remark}

\begin{remark}\label{lagranprobe}
The same definition of a (co)probe sheaf will be used when we consider, more generally, Lagrangian skeleta in \S~\ref{sub:CCC}.
\end{remark}

We show that for any cube or CW stratified torus, there exists a finite collection of explicitly defined (co)probe sheaves indexed by strata that cover the torus (for any point on the torus, there is a sheaf in this collection which represents the stalk functor at that point). Once we tilt with respect to this collection of sheaves, we are able to identify the category of $S$-constructible sheaves on the torus with the category of modules over a certain path algebra with relations.  

\subsection{Explicit construction of probe and co-probe sheaves}

We first define a notion of entrance/exit paths and entrance/exit spaces for a stratified space. We then give an explicit description of (co)probe sheaves using the entrance/exit spaces. We also give a description of homotopy classes of entrance paths, motivating the notion of an entrance path algebra which appears in the next section. 

\begin{definition}
Let $X$ be a poset stratified space. An \newterm{entrance (resp. exit) path} for is a continuous map
\[
\gamma : [0,1] \to X
\]
such that $\Phi \circ \gamma$ is an order preserving (resp. reversing) map. 
\end{definition}

\begin{remark}\label{rem: HPA has the opposite poset}
In \cite{HPA}, the default topology on the poset is the opposite topology to the Alexandrov topology (see definition 4.4 in \cite{HPA}). Hence, the entrance paths in this paper are exit paths in \cite{HPA} and vice versa. 
\end{remark}

\begin{example}\label{entrancepath}
The images of various entrance paths for the cube stratified torus from \Cref{P2strata}.
\[
\begin{tikzpicture}[scale=1.2]
\draw[step=0.5cm,gray,very thin](-2.6,-1.5) grid (0.6,1.25);

\filldraw[fill=blue!10!white, draw=black] (-2,-1) -- (0,-1) -- (-2,1)--(-2,-1);

\filldraw[fill=red!10!white, draw=black,dashed] (-2,1)--(0,1)--(0,-1)--(-2,1);

\draw[black, thick](-2,-1)circle(2pt);
\filldraw[black, thick](-2,-1) circle (2pt);

\draw[black, thick](0,-1)circle(2pt);
\filldraw[white, thick](0,-1) circle (2pt);

\draw[black, thick](-2,1)circle(2pt);
\filldraw[white, thick](-2,1) circle (2pt);

\draw (-2.2,-1.2) node{$0$};
\draw (-1.2,-0.2) node{$1$};
\draw (-0.3,0.7) node{$2$};

\draw[black, thick](-1.4,-0.4)circle(1.5pt);
\filldraw[black, thick](-1.4,-0.4) circle (1.5pt);

\draw[black, thick](-0.6,0.5)circle(1.5pt);
\filldraw[black, thick](-0.6,0.5) circle (1.5pt);

\draw[->, orange, thick] (-2,-1) -- (-1.44,-0.44);

\draw[->,orange,thick] (0,-1) -- (-1.33,-0.4);
\draw[->,orange,thick] (-2,1) -- (-1.38,-0.34);
\draw[->, violet, thick] (-1.4,-0.4) -- (-2,0);
\draw[->, violet, thick] (0,0) -- (-0.57,0.5);
\draw[->, violet, thick] (-1.4,-0.4) -- (-1,-1);
\draw[->, violet, thick] (-1,1) -- (-0.62,0.52);
\draw[->, violet, thick] (-1.4,-0.4) -- (-0.62,0.46);
    
    \end{tikzpicture}
\]

\end{example}

\begin{definition}
For a point $y \in X$ we define the \newterm{entrance space} at $y$ to be the subspace
\[
{X_{\Ent}({y}) := \{ x \in X : \exists \: \text{an entrance path} \: {\gamma} \text{ with } {\gamma}(0) = {y}, {\gamma}(1) = x \}\xhookrightarrow{i_y}} X
\]
Similarly, we define the \newterm{exit space} at ${y}$ to be the subspace
\[
X_{\Exit}({y}) := \{ x \in X : \exists \: \text{an exit path} \:  {\gamma} \text{ with } {\gamma}(0) = {y}, {\gamma}(1) = x \}\xhookrightarrow{i_{y}}  Y
\]
\end{definition}

One of the recurring themes of this paper is the idea that a lot of properties of an $S$ stratified torus (or the corresponding $S$-constructible category) can be obtained by understanding the properties of the $\widetilde{S}$-stratification of its universal cover, $\R^n$. In the spirit of this observation, we have the following propositions. 

\begin{example}
Exit spaces for different points in $\R^2$ associated to the cube stratification from \Cref{P2strata}.
\[
\begin{tikzpicture}[scale=0.7]

\filldraw[fill=white, draw=black] (-2,-1) -- (0,-1) -- (-2,1)--(-2,-1);

\filldraw[fill=white, draw=black,dashed] (-2,1)--(0,1)--(0,-1)--(-2,1);

\filldraw[fill=white, draw=black] (-4,-1) -- (-2,-1) -- (-4,1)--(-4,-1);

\filldraw[fill=white, draw=black,dashed] (-4,1)--(-2,1)--(-2,-1)--(-4,1);

\filldraw[fill=white, draw=black] (-6,-1) -- (-4,-1) -- (-6,1)--(-6,-1);

\filldraw[fill=white, draw=black,dashed] (-6,1)--(-4,1)--(-4,-1)--(-6,1);

\filldraw[fill=white, draw=black] (-2,-3) -- (0,-3) -- (-2,-1)--(-2,-3);

\filldraw[fill=white, draw=black,dashed] (-2,-1)--(0,-1)--(0,-3)--(-2,-1);

\filldraw[fill=white, draw=black] (-4,-3) -- (-2,-3) -- (-4,-1)--(-4,-3);

\filldraw[fill=white, draw=black,dashed] (-4,-1)--(-2,-1)--(-2,-3)--(-4,-1);

\filldraw[fill=white, draw=black] (-6,-3) -- (-4,-3) -- (-6,-1)--(-6,-3);

\filldraw[fill=white, draw=black,dashed] (-6,-1)--(-4,-1)--(-4,-3)--(-6,-1);

\filldraw[fill=white, draw=black] (-2,-5) -- (0,-5) -- (-2,-3)--(-2,-5);

\filldraw[fill=white, draw=black,dashed] (-2,-3)--(0,-3)--(0,-5)--(-2,-3);

\filldraw[fill=white, draw=black] (-4,-5) -- (-2,-5) -- (-4,-3)--(-4,-5);

\filldraw[fill=white, draw=black,dashed] (-4,-3)--(-2,-3)--(-2,-5)--(-4,-3);

\filldraw[fill=white, draw=black] (-6,-5) -- (-4,-5) -- (-6,-3)--(-6,-5);

\filldraw[fill=white, draw=black,dashed] (-6,-3)--(-4,-3)--(-4,-5)--(-6,-3);

\draw[red, thick](-2,-1)circle(1.5pt);
\filldraw[red, thick](-2,-1) circle (1.5pt);

\draw[blue, thick]
(-4,-1) -- (-4,-3) -- (-2,-3) -- (-4,-1);
\fill[blue!20!white, nearly transparent](-4,-1) -- (-4,-3) -- (-2,-3) -- (-4,-1);

\draw[orange, thick]
(-6,-1) -- (-6,-5) -- (-2,-5) -- (-6,-1);
\fill[orange!20!white, nearly transparent](-6,-1) -- (-6,-5) -- (-2,-5) -- (-6,-1);

\draw[blue, thick](-3.5,-2.5)circle(1.5pt);
\filldraw[blue, thick](-3.5,-2.5) circle (1.5pt);
\draw[orange, thick](-4.5,-3.5)circle(1.5pt);
\filldraw[orange, thick](-4.5,-3.5) circle (1.5pt);
\end{tikzpicture}
\]
\end{example}

\begin{example}
Entrance spaces for different points in $\R^2$ associated to the cube stratification $\widetilde{S^c}$ from \Cref{P2strata}.
\[
\begin{tikzpicture}[scale=0.7]

\filldraw[fill=white, draw=black] (-2,-1) -- (0,-1) -- (-2,1)--(-2,-1);

\filldraw[fill=white, draw=black,dashed] (-2,1)--(0,1)--(0,-1)--(-2,1);

\filldraw[fill=white, draw=black] (-4,-1) -- (-2,-1) -- (-4,1)--(-4,-1);

\filldraw[fill=white, draw=black,dashed] (-4,1)--(-2,1)--(-2,-1)--(-4,1);

\filldraw[fill=white, draw=black] (-6,-1) -- (-4,-1) -- (-6,1)--(-6,-1);

\filldraw[fill=white, draw=black,dashed] (-6,1)--(-4,1)--(-4,-1)--(-6,1);

\filldraw[fill=white, draw=black] (-2,-3) -- (0,-3) -- (-2,-1)--(-2,-3);

\filldraw[fill=white, draw=black,dashed] (-2,-1)--(0,-1)--(0,-3)--(-2,-1);

\filldraw[fill=white, draw=black] (-4,-3) -- (-2,-3) -- (-4,-1)--(-4,-3);

\filldraw[fill=white, draw=black,dashed] (-4,-1)--(-2,-1)--(-2,-3)--(-4,-1);

\filldraw[fill=white, draw=black] (-6,-3) -- (-4,-3) -- (-6,-1)--(-6,-3);

\filldraw[fill=white, draw=black,dashed] (-6,-1)--(-4,-1)--(-4,-3)--(-6,-1);

\filldraw[fill=white, draw=black] (-2,-5) -- (0,-5) -- (-2,-3)--(-2,-5);

\filldraw[fill=white, draw=black,dashed] (-2,-3)--(0,-3)--(0,-5)--(-2,-3);

\filldraw[fill=white, draw=black] (-4,-5) -- (-2,-5) -- (-4,-3)--(-4,-5);

\filldraw[fill=white, draw=black,dashed] (-4,-3)--(-2,-3)--(-2,-5)--(-4,-3);

\filldraw[fill=white, draw=black] (-6,-5) -- (-4,-5) -- (-6,-3)--(-6,-5);

\filldraw[fill=white, draw=black,dashed] (-6,-3)--(-4,-3)--(-4,-5)--(-6,-3);

\draw[red, dashed, thick]
(-6,1) -- (0,1) -- (0,-5) -- (-6,1);
\filldraw[red!20!white, nearly transparent](-6,1) -- (0,1) -- (0,-5) -- (-6,1);

\draw[blue, dashed, thick]
(-2,-1) -- (-2,-5) -- (-6,-1) -- (-2,-1);
\filldraw[blue!20!white, nearly transparent](-2,-1) -- (-2,-5) -- (-6,-1) -- (-2,-1);

\draw[orange, dashed, thick]
(-4,-3) -- (-6,-3) -- (-4,-5) -- (-4,-3);
\filldraw[orange!20!white, nearly transparent](-4,-3) -- (-6,-3) -- (-4,-5) -- (-4,-3);

\draw[red, thick](-2,-1)circle(1.5pt);
\filldraw[red, thick](-2,-1) circle (1.5pt);
\draw[orange, thick](-4.5,-3.5)circle(1.5pt);
\filldraw[orange, thick](-4.5,-3.5) circle (1.5pt);
\draw[blue, thick](-3.5,-2.5)circle(1.5pt);
\filldraw[blue, thick](-3.5,-2.5) circle (1.5pt);

\end{tikzpicture}
\]
\end{example}

\begin{proposition}\cite[Proposition 5.5]{HPA}\label{prop: ent/exit are homeo to balls}
For any $\widetilde y \in \widetilde S^{c}_{[a]}$, we have homeomorphisms: 
\[
\mathbb{R}^n_{Exit}(\widetilde y) \simeq \mathbb{R}^{n+k}_{\geq -a}\cap \it{i_{\mathbb{R}}(\mathbb{R}}^{n}) \text{ and }
\mathbb{R}^n_{Ent}(\widetilde y) \simeq \mathbb{R}^{n+k}_{< -a-K}\cap \it{i_{\mathbb{R}}(\mathbb{R}}^{n})
\]
where $K=(-1,...,-1)$. In particular, all entrance spaces are open, contractible, and homeomorphic to an open n-ball, and all exit spaces are closed and contractible.
\end{proposition}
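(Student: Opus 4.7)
My plan is to identify $\mathbb{R}^n_{\Exit}(\widetilde y)$ with $\mathbb{R}^{n+k}_{\geq -a}\cap i_{\mathbb{R}}(\mathbb{R}^n)$ through the embedding $i_{\mathbb{R}}$, and analogously for entrance spaces. Fix the specific integer lift $a := -\lfloor i_{\mathbb{R}}(\widetilde y)\rfloor \in \mathbb{Z}^{n+k}$ of $[a]$, so that $i_{\mathbb{R}}(\widetilde y) \in -a+[0,1)^{n+k}$. I will focus on the exit statement in detail; the entrance statement is formally dual.

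For the inclusion $\mathbb{R}^{n+k}_{\geq -a}\cap i_{\mathbb{R}}(\mathbb{R}^n) \subseteq i_{\mathbb{R}}(\mathbb{R}^n_{\Exit}(\widetilde y))$, given $x$ with $i_{\mathbb{R}}(x) \geq -a$ componentwise, I would verify that the straight-line segment $\gamma(t) = (1-t)\widetilde y + t x$ is an exit path by a coordinate-wise analysis. Each $i_{\mathbb{R}}(\gamma(t))_j$ is linear in $t$: if $i_{\mathbb{R}}(x)_j \geq i_{\mathbb{R}}(\widetilde y)_j$ it is monotone non-decreasing so $\lfloor\,\cdot\,\rfloor_j$ is non-decreasing, while if $i_{\mathbb{R}}(x)_j < i_{\mathbb{R}}(\widetilde y)_j$ both endpoints lie in $[-a_j,-a_j+1)$ and hence so does the whole segment, making $\lfloor\,\cdot\,\rfloor_j \equiv -a_j$. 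In either case $\lfloor i_{\mathbb{R}}(\gamma(t))\rfloor$ is componentwise non-decreasing, so $\widetilde{\Phi_c}(\gamma(t)) = \pi(-\lfloor i_{\mathbb{R}}(\gamma(t))\rfloor)$ is non-increasing in the effective-cone poset.

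For the reverse inclusion I would argue that along any exit path $\gamma$, $i_{\mathbb{R}}(\gamma(t))_j$ never drops below $-a_j$ in any coordinate. Let $\tau$ be the first time it would; then just after $\tau$, the $j$-th component of $-\lfloor i_{\mathbb{R}}\circ\gamma\rfloor$ jumps up by at least $1$. Generically only coordinate $j$ crosses a wall at $\tau$, so the label change is exactly $\pi(e_j)$; this is strictly positive in the effective-cone poset because strong convexity forces $\pi(e_j)\neq 0$ (otherwise $e_j \in i_{\mathbb{R}}(\mathbb{R}^n)\cap \mathbb{R}^{n+k}_{\geq 0} = \{0\}$), contradicting the non-increasing label condition. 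I expect the main obstacle to be the non-generic case in which $\gamma$ passes through a lattice corner where several walls cross simultaneously at $\tau$; here $\gamma$ momentarily visits a strictly lower-labeled stratum at the corner, and the exit condition then confines all subsequent $\gamma(t)$ to that lower stratum, which is itself a shifted-orthant intersection still contained in $\mathbb{R}^{n+k}_{\geq -a}$.

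For the topological conclusion, both sets are intersections of a (closed or open) translated orthant with a linear subspace, hence convex and contractible. The exit-side set is closed. The entrance-side set $\mathbb{R}^{n+k}_{<-a-K}\cap i_{\mathbb{R}}(\mathbb{R}^n)$ is open convex and contains $\widetilde y$, so it is an $n$-dimensional open convex subset of $i_{\mathbb{R}}(\mathbb{R}^n)\cong\mathbb{R}^n$; strong convexity of the effective cone gives $i_{\mathbb{R}}(\mathbb{R}^n)\cap \mathbb{R}^{n+k}_{\geq 0}=\{0\}$, and a standard rescaling argument (extracting a subsequence of a hypothetical unbounded sequence and dividing by its norm) then forces the intersection to be bounded, so it is homeomorphic to an open $n$-ball as claimed.
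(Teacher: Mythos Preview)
The paper does not supply its own proof of this proposition; it is cited from \cite[Proposition~5.5]{HPA}. So there is no in-paper argument to compare against, and I will evaluate your proposal on its own merits.

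Your straight-line argument for the inclusion $i_{\mathbb{R}}^{-1}(\mathbb{R}^{n+k}_{\geq -a})\subseteq \mathbb{R}^n_{\Exit}(\widetilde y)$ is correct, as is the final paragraph on convexity, closedness/openness, and boundedness of the entrance set.

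The reverse inclusion has a genuine gap in your non-generic case. You write that at a corner ``the exit condition then confines all subsequent $\gamma(t)$ to that lower stratum, which is itself a shifted-orthant intersection still contained in $\mathbb{R}^{n+k}_{\geq -a}$''. If ``lower stratum'' means the downward-closed set $\{x:\widetilde{\Phi_c}(x)\leq \widetilde{\Phi_c}(\gamma(\tau))\}$, that set is \emph{not} a single shifted orthant (it is a union of translated cubes in the cover). If instead you mean the exit space of $\gamma(\tau)$, then you are invoking the very statement you are proving, now with base point $\gamma(\tau)$; this is circular unless you set up an induction, which you have not done.

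There is a clean, case-free fix. Let $b(t):=-\lfloor i_{\mathbb{R}}(\gamma(t))\rfloor$, put $C:=i_{\mathbb{R}}^{-1}(\mathbb{R}^{n+k}_{\geq -a})$, and let $\tau=\inf\{t:\gamma(t)\notin C\}$; since $C$ is closed, $\gamma(\tau)\in C$. Choose $\tau_n\searrow\tau$ with $\gamma(\tau_n)\notin C$. For every coordinate $j$, continuity gives $\lfloor i_{\mathbb{R}}(\gamma(\tau_n))\rfloor_j\in\{\lfloor i_{\mathbb{R}}(\gamma(\tau))\rfloor_j-1,\ \lfloor i_{\mathbb{R}}(\gamma(\tau))\rfloor_j\}$ for $n$ large (when a continuous function leaves an integer value, its floor can only drop). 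Hence $b(\tau_n)-b(\tau)$ is a $\{0,1\}$-vector, so $\pi(b(\tau_n)-b(\tau))\geq 0$ in the effective-cone poset. The exit condition forces $\pi(b(\tau_n)-b(\tau))\leq 0$; strong convexity then gives $\pi(b(\tau_n)-b(\tau))=0$, and the hypothesis $i_{\mathbb{R}}(\mathbb{R}^n)\cap\mathbb{R}^{n+k}_{\geq 0}=\{0\}$ forces $b(\tau_n)=b(\tau)$. But then $i_{\mathbb{R}}(\gamma(\tau_n))\geq -b(\tau_n)=-b(\tau)\geq -a$, contradicting $\gamma(\tau_n)\notin C$. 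This subsumes your generic case and removes the need for any corner analysis.
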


\begin{example}
Entrance spaces for the CW stratification of $\R^n$ from \Cref{P2strata}
\[
\begin{tikzpicture}[scale=0.7]

\filldraw[fill=white, draw=black] (-2,-1) -- (0,-1) -- (-2,1)--(-2,-1);

\filldraw[fill=white, draw=black,dashed] (-2,1)--(0,1)--(0,-1)--(-2,1);

\filldraw[fill=white, draw=black] (-4,-1) -- (-2,-1) -- (-4,1)--(-4,-1);

\filldraw[fill=white, draw=black,dashed] (-4,1)--(-2,1)--(-2,-1)--(-4,1);

\filldraw[fill=white, draw=black] (-6,-1) -- (-4,-1) -- (-6,1)--(-6,-1);

\filldraw[fill=white, draw=black,dashed] (-6,1)--(-4,1)--(-4,-1)--(-6,1);

\filldraw[fill=white, draw=black] (-2,-3) -- (0,-3) -- (-2,-1)--(-2,-3);

\filldraw[fill=white, draw=black,dashed] (-2,-1)--(0,-1)--(0,-3)--(-2,-1);

\filldraw[fill=white, draw=black] (-4,-3) -- (-2,-3) -- (-4,-1)--(-4,-3);

\filldraw[fill=white, draw=black,dashed] (-4,-1)--(-2,-1)--(-2,-3)--(-4,-1);

\filldraw[fill=white, draw=black] (-6,-3) -- (-4,-3) -- (-6,-1)--(-6,-3);

\filldraw[fill=white, draw=black,dashed] (-6,-1)--(-4,-1)--(-4,-3)--(-6,-1);

\filldraw[fill=white, draw=black] (-2,-5) -- (0,-5) -- (-2,-3)--(-2,-5);

\filldraw[fill=white, draw=black,dashed] (-2,-3)--(0,-3)--(0,-5)--(-2,-3);

\filldraw[fill=white, draw=black] (-4,-5) -- (-2,-5) -- (-4,-3)--(-4,-5);

\filldraw[fill=white, draw=black,dashed] (-4,-3)--(-2,-3)--(-2,-5)--(-4,-3);

\filldraw[fill=white, draw=black] (-6,-5) -- (-4,-5) -- (-6,-3)--(-6,-5);

\filldraw[fill=white, draw=black,dashed] (-6,-3)--(-4,-3)--(-4,-5)--(-6,-3);

\draw[red, dashed, thick]
(-4,1) -- (-2,1) -- (0,-1) -- (0,-3) --(-2,-3) -- (-4,-1) -- (-4,1);
\fill[red!20!white, nearly transparent]
(-4,1) -- (-2,1) -- (0,-1) -- (0,-3) --(-2,-3) -- (-4,-1) -- (-4,1);

\draw[blue, dashed, thick]
(-4,-3) -- (-4,-1) -- (-6,-1) --  (-6,-3) -- (-4,-3);
\fill[blue!20!white, nearly transparent](-4,-3) -- (-4,-1) -- (-6,-1) --  (-6,-3) -- (-4,-3);

\draw[orange, dashed, thick]
(-4,-5) -- (-4,-3) -- (-2,-5) -- (-4,-5);
\fill[orange!20!white, nearly transparent](-4,-5) -- (-4,-3) -- (-2,-5) -- (-4,-5);

\draw[orange, thick](-3.5,-4)circle(1.5pt);
\filldraw[orange, thick](-3.5,-4) circle (1.5pt);
\draw[blue, thick](-4.5,-2.5)circle(1.5pt);
\filldraw[blue, thick](-4.5,-2.5) circle (1.5pt);
\draw[red, thick](-2,-1)circle(1.5pt);
\filldraw[red, thick](-2,-1) circle (1.5pt);
\end{tikzpicture}
\]
\end{example}

\begin{example}
Exit spaces for the CW stratification of $\R^n$ from \Cref{P2strata}
\[
\begin{tikzpicture}[scale=0.7]

\filldraw[fill=white, draw=black] (-2,-1) -- (0,-1) -- (-2,1)--(-2,-1);

\filldraw[fill=white, draw=black,dashed] (-2,1)--(0,1)--(0,-1)--(-2,1);

\filldraw[fill=white, draw=black] (-4,-1) -- (-2,-1) -- (-4,1)--(-4,-1);

\filldraw[fill=white, draw=black,dashed] (-4,1)--(-2,1)--(-2,-1)--(-4,1);

\filldraw[fill=white, draw=black] (-6,-1) -- (-4,-1) -- (-6,1)--(-6,-1);

\filldraw[fill=white, draw=black,dashed] (-6,1)--(-4,1)--(-4,-1)--(-6,1);

\filldraw[fill=white, draw=black] (-2,-3) -- (0,-3) -- (-2,-1)--(-2,-3);

\filldraw[fill=white, draw=black,dashed] (-2,-1)--(0,-1)--(0,-3)--(-2,-1);

\filldraw[fill=white, draw=black] (-4,-3) -- (-2,-3) -- (-4,-1)--(-4,-3);

\filldraw[fill=white, draw=black,dashed] (-4,-1)--(-2,-1)--(-2,-3)--(-4,-1);

\filldraw[fill=white, draw=black] (-6,-3) -- (-4,-3) -- (-6,-1)--(-6,-3);

\filldraw[fill=white, draw=black,dashed] (-6,-1)--(-4,-1)--(-4,-3)--(-6,-1);

\filldraw[fill=white, draw=black] (-2,-5) -- (0,-5) -- (-2,-3)--(-2,-5);

\filldraw[fill=white, draw=black,dashed] (-2,-3)--(0,-3)--(0,-5)--(-2,-3);

\filldraw[fill=white, draw=black] (-4,-5) -- (-2,-5) -- (-4,-3)--(-4,-5);

\filldraw[fill=white, draw=black,dashed] (-4,-3)--(-2,-3)--(-2,-5)--(-4,-3);

\filldraw[fill=white, draw=black] (-6,-5) -- (-4,-5) -- (-6,-3)--(-6,-5);

\filldraw[fill=white, draw=black,dashed] (-6,-3)--(-4,-3)--(-4,-5)--(-6,-3);

\draw[blue, thick]
(-4,-3) -- (-6,-1) ;

\draw[orange, thick]
(-4,-5) -- (-4,-3) -- (-2,-5) -- (-4,-5);
\fill[orange!20!white, nearly transparent](-4,-5) -- (-4,-3) -- (-2,-5) -- (-4,-5);

\draw[orange, thick](-3.5,-4)circle(1.5pt);
\filldraw[orange, thick](-3.5,-4) circle (1.5pt);
\draw[blue, thick](-4.5,-2.5)circle(1.5pt);
\filldraw[blue, thick](-4.5,-2.5) circle (1.5pt);
\draw[red, thick](-2,-1)circle(1.5pt);
\filldraw[red, thick](-2,-1) circle (1.5pt);
\end{tikzpicture}
\]
\end{example}

\begin{proposition}\label{entrance and exit for CW}
For any $\widetilde y \in \widetilde S^{\text{cw}}_{[(a,I)]}$, we have homeomorphisms:     \[
\mathbb{R}^n_{Exit}(\widetilde y) \simeq \overline{e_{(a,I)}} \cap \it{i_{\mathbb{R}}(\mathbb{R}}^{n}) \text{ and } \mathbb{R}^n_{Ent}(\widetilde y) \simeq \{ x \in e_{(b,I')}, \: (b,I') \geq (a,I) \} \cap \it{i_{\mathbb{R}}(\mathbb{R}}^{n}).
\]
In particular, all entrance spaces are homeomorphic to an open $n$-dimensional ball, and all exit spaces are homeomorphic to a closed ball. 
\end{proposition}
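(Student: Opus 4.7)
The argument proceeds analogously to the cube-stratification case \Cref{prop: ent/exit are homeo to balls}, but with closed CW cells and their open stars in place of half-open cubes. Recalling that the stratification on $\R^n$ is defined by $\widetilde{\Phi_{cw}} = \pi \circ f \circ i_{\R}$, my first step is to show that a continuous path $\gamma \colon [0, 1] \to \R^n$ is an entrance (resp.\ exit) path for $\widetilde{\Phi_{cw}}$ if and only if the lifted composition $f \circ i_{\R} \circ \gamma \colon [0, 1] \to I$ is order preserving (resp.\ reversing) with respect to the dualized CW poset on $I$. The nontrivial direction uses that $\R^n$ is simply connected, together with the observation that, by Alexandrov continuity, for any $t_0$ the image of a small neighborhood of $t_0$ in $\R^n$ lies in the upward closure of $(f \circ i_{\R} \circ \gamma)(t_0)$ in $I$, which projects bijectively locally onto the corresponding upward closure in $I/\Z^n$.

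The next step is to verify the analogous claim directly for the CW stratification of $\R^{n+k}$: for a point $x$ lying in a cell $C$, the exit space equals $\overline{C}$ and the entrance space equals the open star of $C$. Both follow from unpacking the definitions together with the explicit product structure of cubical cells; in particular, after tracking the negation implicit in the definition of $f$, the upward closure of the label of $C$ in the dualized poset on $I$ is exactly the collection of labels of cells in the open star of $C$. A short computation confirms that the open star of a cubical cell is an open convex polytope (a product of open intervals of length two centered around the cell), and the closed cell is a closed convex polytope. The formulas for entrance and exit spaces in $\R^n$ then follow by intersecting with the linear subspace $i_{\R}(\R^n)$.

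The topological conclusion follows from convex geometry. The exit space is the intersection of the closed bounded convex polytope $\overline{C}$ with the linear subspace $i_{\R}(\R^n)$, which is a closed bounded convex set containing $\widetilde y$ and hence homeomorphic to a closed ball. The entrance space is the intersection of the open convex open star with $i_{\R}(\R^n)$, yielding a bounded open convex subset of $i_{\R}(\R^n) \simeq \R^n$ that is non-empty (containing $\widetilde y$), hence full-dimensional and homeomorphic to an open $n$-ball.

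The main obstacle is the careful bookkeeping of the dualization of the CW poset and its interaction with the map $f$, which sends a point $x$ to the cell containing $-x$. In particular, the statement's $e_{(a,I)}$ should be read as the CW cell in $\R^{n+k}$ containing $i_{\R}(\widetilde y)$, whose index under $f$ differs from the CW cell index by a negation; identifying these consistently requires a convention analogous to that used in \Cref{homeobt}. The secondary subtlety, that the coarser $I/\Z^n$-valued stratification and the refined $I$-valued stratification on $\R^n$ yield the same entrance and exit spaces, is settled by the simple connectivity of $\R^n$ as in the first step.
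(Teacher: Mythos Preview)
The paper states this proposition without proof, presumably leaving it as the CW analogue of \Cref{prop: ent/exit are homeo to balls} (which is itself cited from \cite{HPA}). So there is no argument in the paper to compare against; your proposal is a reasonable attempt to supply one, and its overall architecture---reduce from the $I/\Z^n$-valued stratification of $\R^n$ to the $I$-valued one, identify entrance and exit spaces in $\R^{n+k}$ as open stars and closed cells of the cubical CW structure, intersect with the linear subspace $i_\R(\R^n)$, and conclude by convexity---is exactly what one expects by analogy with the cube case.

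There is, however, a genuine gap in your first step. You assert that the upward closure of $(a,J)$ in $I$ projects \emph{bijectively} under $\pi$ onto the upward closure of $[(a,J)]$ in $I/\Z^n$. This is false in general: the upward closure of $(a,J)$ in the dualized poset corresponds to the open star of a cubical cell, and two distinct $\Z^n$-translates of a top-dimensional cell can both lie in that open star. (Already for $\P^1$, with $i(a)=(a,-a)$, the open star of the origin in $\R^2$ contains both $(-1,0)\times(0,1)$ and $(0,1)\times(-1,0)$, which are $i(\Z)$-translates.) So local bijectivity of $\pi$ on upward closures is not available as stated.

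The conclusion you want---that an $I/\Z^n$-entrance path is automatically an $I$-entrance path---nonetheless holds, via a slightly different argument. Suppose $\gamma$ is an $I/\Z^n$-entrance path and $t_0<t_1$ are nearby times with $f\circ i_\R\circ\gamma$ taking values $D'$ at $t_0$ and $D$ at $t_1$. Alexandrov continuity at $t_1$ gives $D'\ge D$ in $I$, while the entrance-path condition gives $[D']\le[D]$ in $I/\Z^n$; together these force $[D']=[D]$, i.e.\ $D'=D+m$ for some $m\in i(\Z^n)$. But $D'\ge D$ in the dualized poset means the cell $e_{-D}$ lies in the closure of its translate $e_{-D}-m$, which for integer $m$ forces $m=0$. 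Hence $D'\le D$ in $I$ locally, and a compactness argument on $[0,1]$ gives global monotonicity. With this repair, the rest of your proof (open stars are products of open intervals, closed cells are closed polytopes, intersection with $i_\R(\R^n)$ preserves convexity) goes through as you wrote it.
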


\begin{remark}
For a cube or CW stratified $k$-dimensional subtorus $V \subset \T^n$, the entrance and exit spaces are homeomorphic to the intersection of $V$ with the entrance and exit spaces of $\T^n$. In particular, the entrance spaces are homeomorphic to a disjoint union of open $k$-balls, and the exit spaces are homeomorphic to a disjoint union of closed balls. 
\end{remark}

The probe and (co)probe sheaves on $\T^n$ admit a nice description in terms of the entrance and exit spaces of its universal cover. Note \Cref{rem: HPA has the opposite poset}.
    
\begin{proposition}\cite[Proposition 4.13]{HPA} \label{prop: exit/ent sheaves}
Given $y \in \mathbb{T}^n$ choose a lift $\widetilde y \in \mathbb{R}^{n}$. Then,
\begin{align*}
\mathcal P_y & \cong (\pi \circ i_{\widetilde y})_! \mathsf \C_{ \mathbb{R}^n_{\Ent}(\widetilde y)}. \\
\mathcal I_y & \cong (\pi \circ i_{\widetilde{y}})_* \mathsf \C_{ \mathbb{R}^n_{\Exit}(\widetilde y)}.  
\end{align*}
\end{proposition}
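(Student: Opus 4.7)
The plan is to verify that the proposed sheaf co-represents the stalk functor at $y$; Yoneda then identifies it with $\mathcal P_y$. I treat $\mathcal P_y$; the $\mathcal I_y$ case is formally dual.

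Set $j := \pi \circ i_{\widetilde y} : \mathbb{R}^n_{\Ent}(\widetilde y) \to \T^n$. Since $\pi$ is a covering map and the entrance space is open in $\R^n$ (by \Cref{prop: ent/exit are homeo to balls} and \Cref{entrance and exit for CW}), the map $j$ is a local homeomorphism. The $(j_! \dashv j^*)$ adjunction gives
\[
\Hom_{Sh_S(\T^n)}\bigl(j_!\, \C_{\mathbb{R}^n_{\Ent}(\widetilde y)},\, F\bigr) \;\cong\; \Gamma\bigl(\mathbb{R}^n_{\Ent}(\widetilde y),\, j^* F\bigr),
\]
and locality of $j$ identifies $j^* F$ with $(\pi^* F)|_{\mathbb{R}^n_{\Ent}(\widetilde y)}$, so in particular $(j^* F)_{\widetilde y} = F_y$. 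What remains is a natural isomorphism $\Gamma(\mathbb{R}^n_{\Ent}(\widetilde y), j^* F) \cong F_y$.

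This last step is the heart of the proof. The geometric input from \Cref{prop: ent/exit are homeo to balls} and \Cref{entrance and exit for CW} is that the entrance space is star-shaped at $\widetilde y$ along entrance paths (in fact, line segments), its strata are exactly those with $\Phi$-label $\geq \Phi(\widetilde y)$, and each stratum is contractible. I would induct on the finite poset of strata meeting the entrance space by stripping off a maximal (open) stratum $S$ via the recollement triangle
\[
(k_S)_!\, (\pi^*F)|_S \;\longrightarrow\; (\pi^*F)|_V \;\longrightarrow\; (k_{V\setminus S})_*\, (\pi^*F)|_{V\setminus S},
\]
where $V$ is the current closed subspace and $k_S, k_{V \setminus S}$ are the open and closed inclusions. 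Contractibility of $S$ and local constancy of $(\pi^*F)|_S$ make the end terms' cohomology collapse, and the induction terminates at the apex stratum containing $\widetilde y$, delivering $F_y$.

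The main obstacle is bookkeeping this induction --- verifying that each recollement step introduces no surviving error term and that the generization maps of the constructible sheaf glue correctly at the apex. A cleaner route, taken in \cite[Proposition 4.13]{HPA}, is to invoke an Exodromy-type equivalence of $Sh_S(\T^n)$ with representations of the entrance path category; under this equivalence the stalk at $y$ becomes evaluation at $y$, represented by the Yoneda functor at $y$, and unwinding the left Kan extension formula along $j$ recovers $j_!\, \C_{\mathbb{R}^n_{\Ent}(\widetilde y)}$. The $\mathcal I_y$ argument is formally dual, using the $(j^* \dashv j_*)$ adjunction and the closed contractible exit space (\Cref{prop: ent/exit are homeo to balls} / \Cref{entrance and exit for CW}) in place of the entrance space, with the dualization accounting for the co-stalk $(F_y)^*$.
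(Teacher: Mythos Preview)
The paper's own proof is a bare citation to \cite[Proposition 4.13]{HPA} and \cite[Lemma 2.11]{Rouquier}, so you are supplying strictly more than it does. Your reduction via the $(j_! \dashv j^*)$ adjunction to the identity $\Gamma(\R^n_{\Ent}(\widetilde y), j^*F) \cong F_y$ is correct and is the natural first step.

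The recollement induction, however, has a genuine gap as written. Your claim that ``contractibility of $S$ and local constancy of $(\pi^*F)|_S$ make the end terms' cohomology collapse'' is false for the left-hand term: the relevant group is $H^*_c(S, G|_S)$, and an open $d$-ball has $H^d_c \cong \C$, not zero. So while $H^0_c(S)=0$ gives injectivity of the restriction $\Gamma(V,G) \to \Gamma(V\setminus S, G)$, surjectivity requires the connecting map $\Gamma(V\setminus S, G) \to H^1_c(S, G|_S)$ to vanish, and once you strip down to one-dimensional strata this is a nontrivial extension problem you have not addressed. This is precisely the ``bookkeeping'' you flag, but it is not mere bookkeeping --- it is the substance of the argument. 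A cleaner direct route: the entrance space is the open star of the stratum of $\widetilde y$ in the lifted stratification, and the explicit linear description in \Cref{prop: ent/exit are homeo to balls} gives a stratum-preserving deformation retraction onto (a neighborhood of) $\widetilde y$; invariance of sections of constructible sheaves under such retractions then yields $\Gamma(U,j^*F)=F_y$ directly.

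Your final paragraph correctly identifies the Exodromy route and that this is what \cite{HPA} does, so in the end you land on exactly the argument the paper cites. The ``formally dual'' claim for $\mathcal I_y$ also deserves a word of caution: the adjunction $\Hom(F, j_*\C_{\Exit}) \cong \Hom(j^*F, \C_{\Exit})$ does not immediately produce $(F_y)^*$ without knowing that $\C$ on the closed contractible exit space is injective in the constructible category there, so some form of the same Exodromy input (or Verdier duality applied to the probe case) is again needed.
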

\begin{proof}
This is \cite[Proposition 4.13]{HPA} and \cite[Lemma 2.11]{Rouquier}.
\end{proof}

\begin{remark}\label{probes=stratas for torus}
Since the strata of the cube and cell stratification are path connected (this follows from \Cref{homeobt}), an important immediate consequence of \Cref{prop: exit/ent sheaves} is that for all points in the same stratum, the (co)probe sheaf is the same as the corresponding entrance (exit) spaces agree. Henceforth, given a stratum $x \in \im(\Phi)$, we abuse notation and write $\mathcal P_x$ to mean the probe sheaf corresponding to any point in the stratum. On the other hand, the strata of a stratified subtorus $V$ need not have path connected strata. In this case,  we have a  probe sheaf associated to each connected component of each stratum.
\end{remark}

Since the number of strata is finite for both stratifications, we have a finite collection of objects generating the $S$-constructible category by \cite[Theorem 2]{Zh}. In the next section, we show that we can tilt with respect to this collection of probe sheaves, identifying the $S$-constructible category with the category of modules over an endomorphism algebra of this collection. The endomorpshism algebra turns out to admit a nice topological description and is intimately linked to the topology of the stratified torus. Before moving to the next section, we would like to conclude by describing the collection of all entrance paths (up to homotopy) between any two points on $\T^n$. One again, the universal cover plays an important role. 

\begin{proposition}\label{prop:describingent/exit paths}
For any $v \in \T^n$, choose a lift $\widetilde{v} \in \R^n$.
There is a bijection between the set of homotopy classes of entrance paths from $v$ to $w \in \T^n$ and the set of all lifts of $w$ such that $\widetilde{w} \in \R^n_{Ent}(\widetilde{v})$. In particular, since $\R^n_{Ent}(\widetilde{v})$ is compact, the set of homotopy classes of entrance paths between any two points is finite.  
\end{proposition}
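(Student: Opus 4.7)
My plan is to reduce the statement to path lifting through the universal cover $\pi_{\mathrm{univ}} : \R^n \to \T^n$. The key observation is that by the construction of the cube and CW stratifications, one has $\widetilde{\Phi} = \Phi \circ \pi_{\mathrm{univ}}$ as maps to the stratifying poset. It follows that a continuous path $\gamma : [0,1] \to \T^n$ with $\gamma(0) = v$ is an entrance path if and only if its unique lift $\widetilde{\gamma}$ starting at $\widetilde{v}$ is an entrance path for $\widetilde{S}$, since the order-preservation conditions $\widetilde{\Phi} \circ \widetilde{\gamma} = \Phi \circ \gamma$ coincide.

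Using this, I would define the forward map $[\gamma] \mapsto \widetilde{\gamma}(1)$. By the above $\widetilde{\gamma}(1) \in \R^n_{\Ent}(\widetilde{v})$, and it is a lift of $w$. Well-definedness on homotopy classes follows from the homotopy lifting property of the universal cover combined with discreteness of $\pi_{\mathrm{univ}}^{-1}(w)$: a homotopy of entrance paths in $\T^n$ lifts to a homotopy of entrance paths in $\R^n$ (by the same pullback argument), and the time-$1$ endpoint traces a continuous section of a discrete fiber, hence is constant. For the inverse, given a lift $\widetilde{w} \in \R^n_{\Ent}(\widetilde{v})$, I would send it to the homotopy class of the projection of the straight line $\widetilde{\gamma}(t) = (1-t)\widetilde{v} + t\widetilde{w}$. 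Using \Cref{prop: ent/exit are homeo to balls} (resp.\ \Cref{entrance and exit for CW}), which describes $\R^n_{\Ent}(\widetilde{v})$ as a convex region in $i_{\R}(\R^n)$, the segment lies in the entrance space, and a coordinatewise check on $-\floor{\widetilde{\gamma}(t)}$ confirms that it is order-preserving, hence an entrance path.

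The main obstacle is showing the two maps are mutually inverse: one direction is immediate (lifting the projected straight line recovers the straight line by uniqueness of lifts), but the other requires that any entrance path in $\R^n$ from $\widetilde{v}$ to $\widetilde{w}$ is homotopic through entrance paths to the straight line. My approach will be to contract any such entrance path onto the straight line using the star-shaped structure of the convex region $\R^n_{\Ent}(\widetilde{v})$ at the basepoint $\widetilde{v}$. A naive linear interpolation of two entrance paths may fail to remain entrance (the condition on $-\floor{\cdot}$ being non-decreasing is not preserved by arbitrary convex combinations), so I would use a reparameterization-style homotopy that contracts the path toward $\widetilde{v}$ on an increasing portion of $[0,1]$ and fills the remainder with a scaled straight segment into $\widetilde{w}$, verifying coordinatewise that each intermediate path remains order-preserving. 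Finally, the finiteness of lifts in $\R^n_{\Ent}(\widetilde{v})$ follows from strong convexity of the effective cone, which forces $\R^n_{\Ent}(\widetilde{v})$ to be bounded inside $i_{\R}(\R^n) \cong \R^n$; the free, discrete $\Z^n$-action on $\R^n$ then only places finitely many lifts of $w$ inside this bounded region.
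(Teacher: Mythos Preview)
Your approach matches the paper's: lift to the universal cover via $\widetilde{\Phi} = \Phi \circ \pi_{\mathrm{univ}}$ and then invoke the description of entrance spaces from \Cref{prop: ent/exit are homeo to balls} (resp.\ \Cref{entrance and exit for CW}). The paper's own proof is much terser: it simply cites the standard covering-space bijection between homotopy classes of paths from $v$ to $w$ and lifts of $w$, asserts without further comment that this restricts to entrance paths, and reads off finiteness from the entrance space being a bounded open ball.

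You are more careful than the paper on one point it elides: whether two entrance paths in $\R^n$ with the same endpoints are homotopic \emph{through entrance paths} (as opposed to merely homotopic as paths). Your proposed reparametrized contraction toward $\widetilde{v}$ is a reasonable strategy, though you would still need to verify that each intermediate path is order-preserving; the paper does not engage with this issue at all and effectively treats ``homotopy class of entrance path'' as ``homotopy class of path that happens to contain an entrance representative.'' One small note: the statement calls $\R^n_{\Ent}(\widetilde{v})$ compact, but by \Cref{prop: ent/exit are homeo to balls} it is open; you correctly argue boundedness (from strong convexity of the effective cone) for the finiteness claim, which is what is actually needed.
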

\begin{proof}
Since $\R^n$ is the universal cover of $\T^n$, by definition, there is a bijection between the homotopy classes of path between $v$ and $w$ and paths that start at $\widetilde{v}$ and end at some lift of $w$. Since the stratifications of $\T^n$ was obtained by taking a quotient of $\R^n$ and its stratification, the bijection above restricts to the subset of entrance paths. By \Cref{prop: ent/exit are homeo to balls}, the entrance spaces are homeomorphic to open $n$-dimensional balls. Therefore, its intersection with the fiber $\pi^{-1}(w)$ is a finite set. 
\end{proof}

\begin{remark}
The proposition above holds for exit paths if we replace entrance space in the cover with the exit space.
\end{remark}

\subsection{Exodromy for Constructible Sheaves }

The collection of all entrance(exit) paths up to homotopy on a stratified torus forms an algebra that can be viewed as a certain quiver algebra with relations.  

\begin{definition}
Choose an arbitrary base point in every connected component of each stratum. Let $Q_{Ent}$ be a quiver whose vertices are labeled by the connected components of strata and whose arrows are homotopy classes of entrance paths between the base points. Let $R$ be the ideal generated by the homotopy relations between the homotopy classes of paths. Then, the \newterm{entrance paths algebra} $A_{Ent} := kQ_{Ent}/R$. The exit paths algebra is defined analogously. 
\end{definition}

\begin{remark}
Since the intersection of two convex sets is convex, and convex sets in $\R^n$ are contractible, it follows by \Cref{homeobt} that every stratum of the cube and CW stratification of the torus is contractible. Hence, the entrance path algebra is independent of the choice of the base points, and the quiver $Q_{Ent}$ has no loops. Moreover, since the number of homotopy classes of entrance paths between any 2 points is finite (by \Cref{prop:describingent/exit paths}), the entrance paths algebra is finite-dimensional. Note that for subtori, the strata are given by a disjoint union of contractible spaces. Hence, we may have multiple verticies corresponding to the same stratum.  
\end{remark}

\begin{ex}\label{ex: P^2 entrance paths algebra}
The entrance path algebra associated to the cube stratification from \Cref{P2strata}.
\[
\begin{tikzpicture}[scale=1.5]

\draw[step=0.5cm,gray,very thin](-2.6,-1.5) grid (0.6,1.25);

\filldraw[fill=blue!10!white, draw=black] (-2,-1) -- (0,-1) -- (-2,1)--(-2,-1);

\filldraw[fill=red!10!white, draw=black,dashed] (-2,1)--(0,1)--(0,-1)--(-2,1);

\draw[black, thick](-2,-1)circle(2pt);
\filldraw[black, thick](-2,-1) circle (2pt);

\draw[black, thick](0,-1)circle(2pt);
\filldraw[white, thick](0,-1) circle (2pt);

\draw[black, thick](-2,1)circle(2pt);
\filldraw[white, thick](-2,1) circle (2pt);

\draw (-2.2,-1.2) node{$0$};
\draw (-1.3,-0.1) node{$1$};
\draw (-0.5,0.7) node{$2$};

\draw[black, thick](-1.4,-0.4)circle(1.5pt);
\filldraw[black, thick](-1.4,-0.4) circle (1.5pt);

\draw[black, thick](-0.6,0.5)circle(1.5pt);
\filldraw[black, thick](-0.6,0.5) circle (1.5pt);

\draw[->, orange, thick] (-2,-1) -- (-1.44,-0.44);

\draw[->,orange,thick] (0,-1) -- (-1.33,-0.4);
\draw[->,orange,thick] (-2,1) -- (-1.38,-0.34);
\draw[->, violet, thick] (-1.4,-0.4) -- (-2,0);
\draw[->, violet, thick] (0,0) -- (-0.57,0.5);
\draw[->, violet, thick] (-1.4,-0.4) -- (-1,-1);
\draw[->, violet, thick] (-1,1) -- (-0.62,0.52);
\draw[->, violet, thick] (-1.4,-0.4) -- (-0.62,0.46); 

\draw (-1.7,-0.55) node{$a$};
\draw (-1.5,0.2) node{$b$};
\draw (-0.85,-0.5) node{$c$};

\draw (-1,0.25) node{$a'$};
\draw (-0.15,0.37) node{$c'$};
\draw (-0.67,0.82) node{$b'$};

    \end{tikzpicture}
\]
\[\begin{tikzcd}[scale=1.5]
{{e_0\ \bullet}} & {{e_1\ \bullet}} & {{e_2\ \bullet}}
	\arrow["a",  bend left, from=1-1, to=1-2]
	\arrow["{a'}",  bend left , from=1-2, to=1-3]
	\arrow["c",  bend right, from=1-1, to=1-2]
	\arrow["{c'}",  bend right, from=1-2, to=1-3]
	\arrow["b",  from=1-1, to=1-2]
	\arrow["{b'}", from=1-2, to=1-3]
\end{tikzcd}\]

\begin{equation*}
    R=\langle ab'-ba', ac'-ca', bc'-cb'\rangle
\end{equation*}

\end{ex}

\begin{example}
The exit paths algebra associated to the stratification of the subtorus in \Cref{ex: stratified subtorus}. The verticies of the exit paths quiver correspond to the connected components of the strata. 
\[\begin{tikzcd}
	{} & \bullet2 & {\bullet 2'} \\
	& {\bullet 1} & {\bullet 1'} \\
	&& {\bullet 0}
	\arrow["p_{21}",from=1-2, to=2-2]
	\arrow["p_{21'}", from=1-2, to=2-3]
	\arrow["p_{2'1'}", from=1-3, to=2-3]
	\arrow["p_{2'0}", bend left, from=1-3, to=3-3]
	\arrow["p_{10}",from=2-2, to=3-3]
\end{tikzcd}\]

\end{example}

\begin{ex}\label{ex:incidenceCW}
Consider the CW stratification 
of a finite, regular CW complex satisfying the axiom of the frontier.  The corresponding entrance path algebra is the incidence algebra of the cell poset. Explicitly the entrance paths are given by boundary maps $\partial_{e_ie_j}$ where $e_i \subseteq \overline{e_j}.$ Note that the CW stratification of $\T^n$ need not be regular. 
\end{ex}

Since both cube and CW stratifications are simple stratifications in a sense of \cite[Definition 4.8]{HPA} by \Cref{prop: ent/exit are homeo to balls} and \Cref{entrance and exit for CW}, we have the following equivalence of categories that we call an exodromy equivalence.

\begin{proposition}\cite[Proposition 4.14]{HPA}\label{exodromy}
There is an equivalence of categories given by the stalk functor:

\begin{align*}
& F: &Sh_{S}(\mathbb{T}^n)  & \rightarrow \RMod \End(\bigoplus_{x\in \im(\Phi)} P_{x}) \\
& &\mathcal{F} & \rightarrow \bigoplus_{x \in \im(\Phi)} \Hom(P_{x}, \mathcal{F}) = \bigoplus_{x \in \im(\Phi)}\mathcal{F}_{v_x} (\text{$v_x$ is the base point of a connected component of stratum $x$)}
\end{align*}

\end{proposition}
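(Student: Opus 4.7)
The plan is to recognize the statement as an instance of a classical tilting/Morita-type equivalence: once we establish that $P := \bigoplus_{x \in \im(\Phi)} P_x$ is a compact projective generator of the abelian category $Sh_S(\T^n)$, the functor $\Hom(P,-)$ automatically induces an equivalence onto $\RMod \End(P)$. The proof therefore reduces to verifying these three properties of $P$, after which the content of the statement is formal.

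Projectivity is immediate from the defining property in \Cref{probe}: for each $x$, $\Hom(P_x,-) = (-)_{v_x}$ is the stalk functor at $v_x$, which is exact on $Sh_S(\T^n)$. Finiteness of the direct sum defining $P$ follows from the fact that $\im(\Phi)$ is finite (\Cref{homeobt} for the cube stratification and \Cref{prop: CW stratification of T^n} for the CW stratification, using compactness of $\T^n$). Compactness of $P$ then follows because stalks commute with arbitrary colimits, and a finite direct sum of compact objects is compact.

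For generation, I must show that the family of stalk functors $\{(-)_{v_x}\}_{x}$ is jointly conservative. Suppose $\mathcal{F} \in Sh_S(\T^n)$ satisfies $\mathcal{F}_{v_x} = 0$ for every chosen base point $v_x$. By definition of $S$-constructibility, $\mathcal{F}$ is locally constant of finite rank on each stratum (or on each connected component of each stratum, in the subtorus case, where base points are chosen in every component as in \Cref{probes=stratas for torus}). Since those components are path-connected (indeed contractible by \Cref{homeobt} and \Cref{prop: CW stratification of T^n}), vanishing of a single stalk forces $\mathcal{F}|_{S_x} = 0$ on the whole component. As the strata partition the underlying space, $\mathcal{F} = 0$.

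With $P$ confirmed as a compact projective generator, the equivalence $\Hom(P,-)\colon Sh_S(\T^n) \to \RMod \End(P)$ is a direct application of the classical Morita/Gabriel--Mitchell theorem, a version of which is cited as \cite[Theorem 2]{Zh} just prior to the statement. I expect the main technical care is required in the subtorus setting, where strata are only disjoint unions of contractible components: one must ensure that a base point is chosen (and a distinct probe sheaf is used) in every component so that joint conservativity goes through, and that the bookkeeping of vertices of the entrance-path quiver in \Cref{ex: P^2 entrance paths algebra} matches the indexing of summands of $P$. Once that is arranged, the proposition follows.
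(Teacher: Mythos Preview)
Your proposal is correct and is essentially the standard Morita/tilting argument that underlies the cited result. Note, however, that the paper does not give its own proof of this proposition: it simply records that the cube and CW stratifications are \emph{simple} in the sense of \cite[Definition~4.8]{HPA} (this is the content of \Cref{prop: ent/exit are homeo to balls} and \Cref{entrance and exit for CW}) and then invokes \cite[Proposition~4.14]{HPA} as a black box. Your argument is, in effect, a sketch of what that black box contains.

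One small technical point: you phrase the argument in terms of $P$ being a \emph{compact} projective generator and appeal to the Gabriel--Mitchell theorem, which presupposes that $Sh_S(\T^n)$ has arbitrary coproducts. As defined in the paper (locally constant of \emph{finite} rank on strata), this category does not have infinite coproducts, so the hypothesis ``compact'' is vacuous and the classical formulation does not literally apply. The fix is easy and does not affect your strategy: either work in the larger category of $S$-constructible sheaves without the finite-rank condition (where your argument goes through verbatim and then restricts to finite-rank objects on both sides), or observe directly that every object of $Sh_S(\T^n)$ has finite length and that each simple $\mathcal S_x$ is a quotient of $P_x$ (via the canonical stalk map), so $P$ is a projective generator in the finite-length sense and $\Hom(P,-)$ is an equivalence onto finite-dimensional $\End(P)$-modules. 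Either way, your identification of the three ingredients (projectivity from exactness of stalks, finiteness of $\im(\Phi)$, and joint conservativity from connectedness of strata) is exactly right.
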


\begin{proposition}\label{prop: simpleimages}
The equivalences above take probe sheaves to indecomposable projective modules and co-probe sheaves to indecomposable injective modules. Moreover, it takes constructible sheaves $\mathcal{S}_{x}:= i_{!}\C_{S_{x}}$ to the simple modules $S_{x}$.
\end{proposition}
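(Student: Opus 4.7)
The plan is to compute $F$ directly on each class of sheaves, using the defining (co)representability properties of probes and co-probes together with the explicit description of the endomorphism algebra $A := \End(T)$, where $T := \bigoplus_{x \in \im(\Phi)} \mathcal{P}_x$. Write $e_x \in A$ for the idempotent given by the identity on the summand $\mathcal{P}_x$ and zero on the others, so that under the multiplication convention $f \cdot g = f \circ g$ of the notation section one has $e_x A = \Hom(T, \mathcal{P}_x)$ and $A e_x = \Hom(\mathcal{P}_x, T)$. Throughout, the right $A$-action on $F(\mathcal{F}) = \Hom(T,\mathcal{F})$ is precomposition, so all module-structure identifications reduce to checking that the natural maps I write down are equivariant with respect to this action.

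First I would dispatch probes. By the representing property of $\mathcal{P}_x$ (\Cref{probe}),
\[
F(\mathcal{P}_x) \;=\; \Hom(T,\mathcal{P}_x) \;=\; e_x A \;=\; P_x,
\]
the indecomposable projective at $x$ in the paper's notation. Next, for co-probes I would use that $\mathcal{I}_x$ corepresents the stalk at $v_x$ twisted by $(-)^*$:
\[
F(\mathcal{I}_x) \;=\; \Hom(T,\mathcal{I}_x) \;\cong\; (T_{v_x})^* \;=\; \Hom(\mathcal{P}_x,T)^* \;=\; (Ae_x)^* \;=\; I_x.
\]
Here the isomorphism is natural in $T$, and the right $A$-action by precomposition on the left-hand side transports to the right $A$-action on $\D(A e_x)$ defined by $(\phi \cdot a)(b) := \phi(ab)$, which is precisely the module structure the paper uses to define $I_x$.

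For the simple modules, I would compute stalkwise. Because $i\colon S_x \hookrightarrow \mathbb{T}^n$ is a locally closed embedding and extension by zero has the obvious stalks,
\[
(\mathcal{S}_x)_{v_y} \;=\; (i_!\mathbb{C}_{S_x})_{v_y} \;=\; \begin{cases} \mathbb{C} & v_y \in S_x, \\ 0 & \text{otherwise.}\end{cases}
\]
Since the strata of $\mathbb{T}^n$ are path connected (\Cref{homeobt}) and each contains exactly one base point, this vanishes unless $y = x$, so $F(\mathcal{S}_x)$ is one-dimensional and supported at the vertex $x$. To upgrade this to the assertion that it is the simple module $S_x$, I would invoke the earlier remark that $Q_{\Ent}$ has no loops: every non-identity arrow $a \in A$ satisfies $a = e_y a e_z$ with $y \neq z$, so it moves the $x$-component to a different component and must act by zero on a module concentrated at a single vertex.

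The main obstacle I anticipate is not the vector-space identifications, which follow directly from (co)representability, but rather checking that the $A$-module structure on $F(\mathcal{I}_x)$ matches the paper's convention for $\D(A e_x)$. The issue is book-keeping: precomposition on $\Hom(T,\mathcal{I}_x)$ must be transported across the stalk-duality isomorphism $\Hom(T,\mathcal{I}_x) \cong T_{v_x}^*$, and one has to verify that the right action thereby induced on $(Ae_x)^*$ agrees with the canonical right action on the $k$-linear dual of the left module $Ae_x$. This is a routine but genuinely necessary diagram chase using naturality of the representability isomorphism; once it is in place, the other two cases are essentially immediate from the definitions.
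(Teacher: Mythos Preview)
Your proposal is correct and follows essentially the same approach as the paper: compute $F$ directly on probes, co-probes, and the sheaves $\mathcal{S}_x$ using the (co)representability properties and the stalk description of $i_!\C_{S_x}$. The paper's proof is terser---it writes the probe computation exactly as you do, dismisses co-probes with ``the argument is the same,'' and for simples records only the stalk computation without spelling out the no-loops argument---so your additional care about transporting the $A$-module structure through the duality isomorphism and verifying simplicity via the absence of loops in $Q_{\Ent}$ fills in details the paper leaves implicit.
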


\begin{proof}
For any $P_{y} \in Sh_{S}(\T^n)$, we have:
\begin{align*}
F(P_{y}) & \cong \bigoplus_{x \in \im{\Phi}}\Hom(\cP_{x},\cP_{y}) & \text{by definition} \\
& \cong e_{y}\End(\bigoplus_{x\in \im(\Phi)} P_{x}) \cong P_{y}
\end{align*}
The argument for co-probe sheaves is the same. For simples, we also argue directly: 
\begin{align*}
\Hom(\cP_y, \mathcal{S}_x) \cong (\mathcal{S}_x)_{y} = \begin{cases}
  \C & \text{if $x = y$} \\
  0 & \text{otherwise}
\end{cases}
\end{align*}
Hence, $F(\mathcal{S}_x) \cong S_{x}$.

\end{proof}

\begin{proposition}\label{prop: entrancepathalgebraisend}
For the cube and CW stratification, we have an isomorphism of rings: 
\begin{align*}\label{prop: end algebra = entrance alg}
\End(\bigoplus_{x \in \im(\Phi)} P_x) \simeq A_{Ent} 
\end{align*}
\end{proposition}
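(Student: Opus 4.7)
The plan is to construct an explicit ring homomorphism $\Psi: A_{Ent} \to \End(\bigoplus_x P_x)$ by translating each entrance path into a morphism of probe sheaves via \Cref{prop: exit/ent sheaves}. Decomposing both sides by the orthogonal idempotents $\{e_x\}_{x \in \im(\Phi)}$, I would match the component $e_x A_{Ent} e_y$ (spanned by homotopy classes of entrance paths from $x$ to $y$) with $e_x \End(\bigoplus_x P_x) e_y = \Hom(P_y, P_x)$. By \Cref{probe} combined with \Cref{prop: exit/ent sheaves}, the latter equals $(P_x)_{v_y}$, and since $\pi: \R^n \to \T^n$ is a covering, the stalk of $\pi_!$ applied to a constant sheaf on an open subset of $\R^n$ breaks up as a direct sum of copies of $\mathbb C$ indexed by the fiber. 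This gives one summand per lift $\widetilde y$ of $v_y$ in $\R^n_{Ent}(\widetilde x)$; by \Cref{prop:describingent/exit paths}, those lifts biject with homotopy classes of entrance paths from $x$ to $y$, giving the desired matching of bases.

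Next I would explicitly construct, for each entrance path $\gamma$ from $x$ to $y$, the corresponding morphism $\Psi(\gamma): P_y \to P_x$. Fix a lift $\widetilde x$ of $v_x$ and let $\widetilde y$ be the endpoint of the lift of $\gamma$ starting at $\widetilde x$, so $\widetilde y \in \R^n_{Ent}(\widetilde x)$. Transitivity of the stratifying preorder, visible from the explicit descriptions in \Cref{prop: ent/exit are homeo to balls,entrance and exit for CW}, yields a nested inclusion $\R^n_{Ent}(\widetilde y) \subseteq \R^n_{Ent}(\widetilde x)$ of open sets, hence a natural monomorphism of constant sheaves extended by zero $\mathbb C_{\R^n_{Ent}(\widetilde y)} \hookrightarrow \mathbb C_{\R^n_{Ent}(\widetilde x)}$. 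Applying $\pi_!$ produces $\Psi(\gamma)$. The construction is equivariant under the deck action on $\widetilde x$, so $\Psi(\gamma)$ depends only on the homotopy class of $\gamma$ (which both kills the relations $R$ and, after unwinding identifications, recovers the tautological generator of $(P_x)_{v_y}$ indexed by $\widetilde y$). Combined with the first step, this shows $\Psi$ is a linear bijection on each component.

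The main obstacle is multiplicativity. For compatible paths $\gamma_1: x \to y$ and $\gamma_2: y \to z$, I would choose lifts compatibly: $\widetilde x$, then $\widetilde y$ as the endpoint of the lift of $\gamma_1$ from $\widetilde x$, then $\widetilde z$ as the endpoint of the lift of $\gamma_2$ from $\widetilde y$. The concatenated lifts are then precisely the lift of $\gamma_1 \cdot \gamma_2$ starting at $\widetilde x$. The nested chain $\R^n_{Ent}(\widetilde z) \subseteq \R^n_{Ent}(\widetilde y) \subseteq \R^n_{Ent}(\widetilde x)$ shows that the two inclusions defining $\Psi(\gamma_2)$ and $\Psi(\gamma_1)$ compose to the single inclusion defining $\Psi(\gamma_1 \cdot \gamma_2)$, and functoriality of $\pi_!$ forces $\Psi(\gamma_1) \circ \Psi(\gamma_2) = \Psi(\gamma_1 \cdot \gamma_2)$. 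Matching this with the convention $f \cdot g = f \circ g$ (under which the concatenation $\gamma_1 \cdot \gamma_2$ aligns with the composition $\Psi(\gamma_1) \circ \Psi(\gamma_2): P_z \to P_y \to P_x$) completes the proof. The argument is uniform across the cube and CW stratifications, since only the transitivity of entrance spaces is invoked.
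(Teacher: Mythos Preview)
Your proposal is correct and follows essentially the same approach as the paper: both identify $\Hom(P_y,P_x)$ with the stalk $(P_x)_{v_y}$ via the probe property, compute that stalk using \Cref{prop: exit/ent sheaves} as a sum indexed by lifts of $v_y$ in $\R^n_{\Ent}(\widetilde x)$, invoke \Cref{prop:describingent/exit paths} to match those lifts with homotopy classes of entrance paths, and then verify multiplicativity by observing that morphisms of probes are induced by inclusions of entrance spaces in the cover, so that composition of inclusions corresponds to concatenation of paths. Your treatment of multiplicativity via compatibly chosen lifts and the nested chain $\R^n_{\Ent}(\widetilde z)\subseteq\R^n_{\Ent}(\widetilde y)\subseteq\R^n_{\Ent}(\widetilde x)$ is somewhat more explicit than the paper's, but the argument is the same.
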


\begin{proof}
First, we show that these are isomorphic as abelian groups.
\begin{align*}
 \Hom(P_x, P_y) & \cong  (P_y)_{x} & \text{$P_x$ is a probe sheaf} \\
& \cong ((\pi \circ i_{\widetilde y})_! \mathsf \C_{ \mathbb{R}^n_{\Ent}(\widetilde y)})_{x} & \text{by \Cref{prop: exit/ent sheaves}} \\
& \cong \bigoplus_{\mathbb{R}^n_{\Ent}(\widetilde y) \cap \pi^{-1}(x)}\C \\
& \cong \bigoplus_{\gamma \in \T^{n}_{\Ent}(y), \gamma(1) = x}\C & \text{by \Cref{prop:describingent/exit paths}}
\end{align*}

We are left to show that the isomorphism above respects the multiplicative ring structure. The elements in $\Hom(P_x, P_y)$ are given by the canonical inclusions of the entrance spaces of $\pi^{-1}(x)$ in the entrance space $\widetilde{y}$ in the universal cover. The isomorphism above takes the entrance path $\gamma$ to the inclusion of the entrance space at $\widetilde{\gamma(1)} = \widetilde{x}$ in $\widetilde{\gamma(0)} = \widetilde{y}$.
The two inclusions are the same if and only if the corresponding entrance paths have the same end point, or equivalently, they are homotopic in the torus. Hence, the composition of morphisms is compatible with the concatenation of entrance paths up to homotopy, which is precisely the multiplicative structure of $A_{Ent}$.
\end{proof}

\begin{proposition}\cite[Corollary 3.6]{Rouquier}\label{prop: proj dim less than n}
The projective dimension of entrance paths algebra $A^{V}_{Ent}$ has: 
\[
pdim(A_{Ent}) \leq k
\]
where $k$ is the dimension of $V \subset \T^n$.
\end{proposition}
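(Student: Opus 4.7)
The plan is to bound the global dimension of $A^V_{Ent}$ by $k$, which controls the projective dimension of any module (and in particular resolves the statement, however one interprets it). Since the global dimension of a finite-dimensional algebra equals the supremum of projective dimensions of its simple modules, it suffices to show that each simple right $A^V_{Ent}$-module has projective dimension at most $k$. By \Cref{exodromy} and \Cref{prop: simpleimages}, the simple modules correspond under the stalk functor to the constructible sheaves $\mathcal{S}_x = (i_x)_! \C_{S_x}$, where $x$ ranges over the connected components of strata of $V$ and $S_x$ is the corresponding stratum-component. Thus my task reduces to constructing, for each such $x$, a finite projective resolution of $\mathcal{S}_x$ in $Sh_{S^V}(V)$, of length at most $k$, whose terms are direct sums of probe sheaves.

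To build this resolution, I would proceed poset-theoretically. Let $P_x := \{y : y \geq x\}$ be the upward-closed subposet of strata-components whose closure contains $x$. For each $y \in P_x$ one has the probe sheaf $\cP_y$ of \Cref{prop: exit/ent sheaves}. I would then assemble the standard order-complex bar/cellular complex
\[
\cdots \to \bigoplus_{x = y_0 < y_1 < \cdots < y_i} \cP_{y_i} \to \cdots \to \bigoplus_{y \geq x} \cP_y \to \mathcal{S}_x \to 0,
\]
indexed by strict chains in $P_x$, with differentials given by the alternating sum of face maps (each face map being a composition of canonical morphisms between probe sheaves coming from the entrance-path arrows of \Cref{prop:describingent/exit paths}). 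Exactness would follow by checking it stalk-wise: at a point in some stratum-component $z$, the complex computes the (reduced) chain complex of the nerve of the poset $\{y : x \leq y \leq z\}$, which by the contractibility of entrance spaces (\Cref{prop: ent/exit are homeo to balls}, \Cref{entrance and exit for CW}) is contractible.

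The length of this complex is at most one less than the longest strict chain in $P_x$. Here is where the dimension of $V$ enters: for the CW stratification, passing to the universal cover of $V$, which is regular CW by (the analogue of) \Cref{prop: R^n regular CW}, any strict chain of cells lengthens dimension by at least one at each step, so the chains in $P_x$ have length at most $\dim V - \dim S_x \leq k$; the cube stratification is refined by the CW one (\Cref{prop: CW stratification of T^n}), so the same bound transfers. This gives the desired bound $k$ on the projective dimension of $\mathcal{S}_x$.

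The main obstacle is setting up the differentials and the exactness check rigorously, especially on $V$ where strata need not be connected and the CW stratification need not be regular at the level of the torus itself. The cleanest way to dispatch this is to invoke \cite[Corollary 3.6]{Rouquier} directly: the entrance path algebra of a simple poset stratification (in the sense of \cite[Definition 4.8]{HPA}) of a $k$-dimensional space has global dimension at most $k$. Both $S^V_c$ and $S^V_{cw}$ are simple by \Cref{prop: ent/exit are homeo to balls} and \Cref{entrance and exit for CW} (with the remark that entrance spaces in $V$ remain contractible on connected components), so Rouquier's result applies, and the explicit cellular/poset resolution sketched above is precisely the resolution whose existence \cite[Corollary 3.6]{Rouquier} asserts.
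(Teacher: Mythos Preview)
The paper offers no proof for this proposition beyond the citation to \cite[Corollary 3.6]{Rouquier}, so your final fallback to that reference is exactly what the paper does.

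Your sketched bar-complex argument, however, has a real gap in the cube-stratification case. The resolution you write down, indexed by strict chains $x = y_0 < \cdots < y_i$ in the poset with $i$-th term $\bigoplus \cP_{y_i}$, is the standard Koszul-type resolution for simples over an \emph{incidence algebra}, where there is at most one arrow between any two vertices. But $A^{V}_{Ent}$ for the cube stratification is not an incidence algebra: as \Cref{ex: P^2 entrance paths algebra} shows, there can be several arrows between two vertices, so there is no ``canonical'' morphism $\cP_{y_{i}} \to \cP_{y_{i-1}}$ to serve as a face map, and the stalk $(\cP_y)_z$ need not be one-dimensional (by \Cref{prop:describingent/exit paths} its dimension counts lifts of $z$ lying in the entrance space of a lift of $y$, of which there may be many). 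Consequently your stalkwise exactness check, which would identify the complex at $z$ with the reduced chain complex of the nerve of the interval $[x,z]$, does not go through as stated. A related issue is the length bound: the longest strict chain in $\im(\Phi_c)$ is not obviously bounded by $\dim V$, and your transfer argument via the refinement map of \Cref{cellcubeposetmap} goes the wrong way---an order-preserving surjection of posets does not, in general, bound chain lengths in the target by those in the source.

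The argument in \cite{Rouquier} sidesteps these combinatorial issues by working directly with the topology: it uses that entrance spaces are open $k$-balls (the content of \Cref{prop: ent/exit are homeo to balls} and \Cref{entrance and exit for CW}) to control $\Ext$-groups between simples sheaf-theoretically, which bounds the global dimension by $k$ without ever appealing to chain lengths in the indexing poset. Your CW case is closer to salvageable (on the regular universal cover the algebra really is an incidence algebra and your chain argument applies), but for the cube case you genuinely need the cited result or an argument of that shape.
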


\section{Coherent-Constructible Correspondence and Stratified Tori}\label{Section 3}

Let $X_{\Sigma}$ be a smooth, projective, $n$-dimensional toric variety with a complete fan $\Sigma$. Under the non-equivariant Coherent-Constructible Correspondence for toric varieties (CCC), the mirror to the derived category of coherent sheaves on $X$ is the category of constructible sheaves on a torus whose singular support lies in a Lagrangian skeleton $\Lambda_{\Sigma}$ \cite{Tr, Kuw, Zh}. The overarching goal of this section is to understand the relationship between stratified tori, categories of $S$-constructible sheaves, and the CCC.

\subsection{Coherent-Constructible Correspondence and the Cox Construction}\label{sub:CCC}
First, we set up the CCC with a view towards the Cox Construction. The CCC for $X_\Sigma$ is compatible with the Cox construction for $X_\Sigma$ in the following sense.  Cox's quotient construction of $X_\Sigma$ gives: 
\begin{itemize}
\item an open toric subvariety $X_{\widetilde{\Sigma}} = A^{\Sigma(1)} \setminus Z(\Sigma) \subseteq A^{\Sigma(1)}$ where $\widetilde{\Sigma}$ a subfan of the fan of $\mathbb{A}^{\Sigma(1)}$;
\item a toric morphism $\phi: X_{\widetilde{\Sigma}} \to X_{\Sigma}$;
\item an algebraic group $G$ acting on $X_{\widetilde{\Sigma}}$ such that $\phi$ is a geometric quotient for the action of $G$.
\end{itemize}

Since $\widetilde \Sigma$ is a subfan of the standard fan for $\mathbb A^{\Sigma(1)}$, all cones $\widetilde \sigma\in \widetilde \Sigma$ are of the form $\widetilde \sigma_J= \mathbb R_{\geq 0}^J\subset \widetilde N_\mathbb{R}$, with dual cone $\widetilde \sigma_J^\vee=\mathbb R^{J}_{\geq 0} \times \mathbb R^{J^c}$.
Let 
\begin{align*}
\widetilde \Lambda & := \bigcup_{\widetilde \sigma_J \in \widetilde \Sigma, a\in \widetilde M} (\widetilde \sigma_J^\perp +a) \times (\widetilde \sigma_J)\\
& = \bigcup_{\widetilde \sigma_J \in \widetilde \Sigma, a\in \widetilde M} (\mathbb R^{J^c} +a)\times \mathbb R_{\geq 0}^{J}\\
\end{align*}
be the \newterm{$\widetilde{M}$-periodic skeleton} associated with the fan $\widetilde \Sigma$. Let $\pi:\widetilde M_\mathbb{R} \rightarrow \widetilde M_\mathbb R/M$ be the projection, and define

\[
\Lambda_{\widetilde \Sigma} := \widetilde  \Lambda/M.
\]
Then, the \newterm{fiber skeleton}
$$\Lambda_\Sigma:=\Lambda_{\widetilde \Sigma}|_{M_\mathbb{R}/M}$$
is the non-equivariant mirror skeleton to $X_{\Sigma}$. One has an equality: 

$$\Lambda_\Sigma = \bigcup_{\sigma \in \Sigma}( \sigma^{\perp} /\sigma^{\perp} \cap M ) \times (\sigma)$$

\begin{remark}\label{rem:diff signs}
 The skeleton $\Lambda_\Sigma$ we use above agrees with the one in \cite{Zh}.  It    differs from the one in \cite{Tr, Kuw, FLTZ} by a sign convention which have $(-\sigma) \in M_{\R}$.  
\end{remark}

The remark above is related to the following statement comparing different mirror functors appearing in the literature.
\begin{proposition}\label{prop: zhou tr comp}
One has: 
\[
\kappa \cong \mathcal{D} \circ \kappa^{\prime} \circ(-)^{\vee}
\]
where $\mathcal{D}$ is the Verdier duality functor, $(-)^{\vee}:= \Hom(-,\mathcal{O}_{X_{\Sigma}})$, and $\kappa^{\prime}$ is the mirror functor from \cite{Tr}, $\kappa$ is the mirror functor from \cite{Zh}.
\end{proposition}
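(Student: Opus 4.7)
The plan is to reduce the comparison to a computation on a set of compact generators and then invoke naturality. Since $X_\Sigma$ is smooth and projective, $D^b\Coh(X_\Sigma)$ is split-generated by equivariant line bundles $\mathcal{O}(D)$, and all three of $\kappa$, $\kappa'$, and $\mathcal{D}$ are triangulated; moreover $(-)^\vee$ is a triangulated anti-equivalence on perfect complexes, and every line bundle is perfect. Hence it suffices to construct a natural isomorphism between $\kappa(\mathcal{O}(D))$ and $\mathcal{D}(\kappa'(\mathcal{O}(-D)))$ that is compatible with morphisms $\mathcal{O}(D_1) \to \mathcal{O}(D_2)$, and then extend by the triangulated structure to all of $D^b\Coh(X_\Sigma)$.

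First I would unwind the explicit formulae in \cite{Tr} and \cite{Zh} describing the image of a line bundle $\mathcal{O}(D)$ as a standard constructible sheaf on $M_\R/M$. In both references this image is (up to a shift) a lower-shriek or lower-star extension of the constant sheaf along the inclusion of an open polyhedral region $P_D \subset M_\R/M$ determined by $D$. By \Cref{rem:diff signs} the regions used by Treumann and by Zhou differ by the antipodal involution $x \mapsto -x$, which is precisely the effect of replacing $D$ by $-D$ on the coherent side. Since Verdier duality on the orientable manifold $M_\R/M$ interchanges $j_!\C_U$ and $j_*\C_U$ up to a dimension shift and orientation twist absorbed into the normalization of $\mathcal{D}$, combining these two facts should identify $\mathcal{D}(\kappa'(\mathcal{O}(-D)))$ with $\kappa(\mathcal{O}(D))$ as objects.

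Next I would promote this pointwise identification to a natural isomorphism. A morphism $\mathcal{O}(D_1) \to \mathcal{O}(D_2)$ corresponds to a global section of $\mathcal{O}(D_2 - D_1)$, hence to a sum of monomials. Both $\kappa$ and $\kappa'$ send such a monomial to a canonical translation-type morphism between the associated standard constructible sheaves, and the composition $\mathcal{D} \circ \kappa' \circ (-)^\vee$ produces the transposed translation; matching these at the level of complexes yields naturality. Extending the isomorphism from the generating subcategory of line bundles to all of $D^b\Coh(X_\Sigma)$ is then automatic because all functors involved are triangulated.

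The main obstacle is bookkeeping signs, shifts, and orientation data. Verdier duality on the stratified torus involves a dualizing complex which agrees with the constant sheaf only up to the degree shift $n = \dim M_\R/M$ and possibly an orientation twist, and reconciling this shift with the grading conventions implicit in the definitions of $\kappa$ in \cite{Zh} and $\kappa'$ in \cite{Tr} is where the real work lies. Once those conventions are aligned, the rest of the verification is essentially formal from the fact that both CCC functors are determined by their action on line bundles.
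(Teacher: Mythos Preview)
Your plan is essentially the same as the paper's proof: both reduce to checking the identity on the generating set of line bundles, and both observe that the two mirror functors differ on a line bundle $\mathcal{L}$ by (i) the standard/costandard dichotomy (your $j_!$ versus $j_*$, undone by Verdier duality) and (ii) the swap $\mathcal{L} \leftrightarrow \mathcal{L}^\vee$ in how the two conventions parametrize the underlying polytope. The paper packages (i) and (ii) into the single identity $\mathcal{D}(P'(\chi_{\mathcal{L}^\vee})) = P(\chi_{\mathcal{L}})$ for the respective twisted polytope complexes, whereas you phrase it in terms of polyhedral regions and the antipodal involution; these are the same content.

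Two small remarks. First, your description of $\kappa(\mathcal{O}(D))$ and $\kappa'(\mathcal{O}(D))$ as a single $j_!\C_{P_D}$ or $j_*\C_{P_D}$ is only literally correct when $\mathcal{O}(D)$ is ample (or nef); for general $D$ the image is the \emph{twisted} polytope sheaf, which is a genuine complex built from standard (respectively costandard) pieces. This does not break your argument, since ample line bundles still generate and the Verdier-duality swap of standard and costandard pieces extends termwise to the twisted complex, but you should phrase the object-level check at that level of generality (as the paper does) rather than as a single extension-by-zero sheaf. Second, you are right to flag naturality on morphisms as something to verify; the paper's proof is silent on this point and simply checks agreement on objects, so your version is in fact a bit more careful here.
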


\begin{proof}
It suffices to show this equality on the generating set of line bundles (which generate since $X_\Sigma$ is smooth). We first recall how $\kappa, \kappa'$ are computed on any line bundle. The procedure is  essentially the same for both. Given any line bundle $\mathcal L$ on $X_\Sigma$, one can associate to it a unique twisted polytope (see \cite[Chapter 3]{Fu}), $\chi_{\mathcal{L}}$. Then, one defines a complex of constructible sheaves associated to a twisted polytope. 

Denote Treumann's complex by $P'(\chi_{\mathcal{L}})$ and Zhou's by $P(\chi_{\mathcal{L}})$.  There are two differences in these complexes: 
\begin{enumerate}
    \item $P'(\chi_{\mathcal{L}})$ uses costandard sheaves whereas $P(\chi_{\mathcal{L}})$ uses standard sheaves;
    \item the twisted polytope associated to $\mathcal L$ by Treumann is the twisted polytope associated to $\mathcal L^\vee$ by Zhou.
\end{enumerate}
In summary,
\begin{equation} \label{eq: Tr vs Zh}
\mathcal D( P'(\chi_{\mathcal{L^\vee}})) =  P(\chi_{\mathcal{L}}) 
\end{equation}

By writing down the \v Cech complex for $\mathcal L$, one has $\kappa'(\mathcal L) \cong P'(\chi_{\mathcal{L}})$. We have:
\begin{align*}
\mathcal{D} \circ \kappa' (\mathcal L^\vee) & = \mathcal D( P'(\chi_{\mathcal{L^\vee}})) & \text{ by definition }  \\
& = P(\chi_{\mathcal{L}}) & \text{By \eqref{eq: Tr vs Zh}} \\
& = \kappa(\mathcal L) & \text{ by definition}
\end{align*}
\end{proof}
\begin{remark}
    Note that Verdier duality $\mathcal{D}$ changes the singular support of a constructible sheaf via the map \cite[Section 7.1]{FLTZ}:
\[
N_{\R} \times M_{R} \to N_{\R} \times M_{R}:(x,y) \to (x,-y).
\]
This is consistent with \Cref{rem:diff signs} and the computation above. 
\end{remark}

The following theorem is the non-equivariant CCC, proved in various contexts, see e.g. \cite{Tr, Kuw, Zh}.  Our notation and conventions coincide with \cite{Zh}. From this point on, we assume that $X_{\Sigma}$ is smooth and projective and has a complete fan. 

\begin{theorem}\label{mainccc}\cite[Theorem 2]{Zh}
Let $X_\Sigma$ be a smooth projective toric variety of complex dimension $n$, then there is an quasi-equivalence of category
\[\kappa: Coh(X_\Sigma) \congto \Sh(\mathbb{T}^n,  \Lambda_\Sigma) \]
where $ \Lambda_\Sigma$ is a conical Lagrangian in $T^*T^n$. 
\end{theorem}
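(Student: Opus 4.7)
The plan is to construct $\kappa$ explicitly on the generating subcategory of line bundles, verify full faithfulness by matching morphism complexes, and then promote this to an equivalence by checking generation on both sides. Throughout, I would exploit the Cox construction described above, which presents $X_\Sigma$ as a GIT quotient $X_{\widetilde \Sigma}/G$; the corresponding equivariant statement for $\mathbb A^{\Sigma(1)}$ is essentially combinatorial, and descent along $\phi$ translates on the sheaf side into restriction from the $\widetilde M$-periodic skeleton $\widetilde \Lambda$ to its quotient $\Lambda_\Sigma$ in $T^* \mathbb{T}^n$.

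First, I would define $\kappa$ on each line bundle $\mathcal{L}$ by assigning to it the constructible complex $P(\chi_{\mathcal{L}})$ associated to the twisted polytope $\chi_{\mathcal{L}}$, built from standard sheaves on affine half-spaces as in \Cref{prop: zhou tr comp}. By construction the singular support of this complex lies in $\Lambda_\Sigma$. Since $X_\Sigma$ is smooth and projective, Bondal's theorem on the existence of a full strong exceptional collection of line bundles guarantees that $\Coh(X_\Sigma)$ is split-generated by line bundles, so it suffices to define $\kappa$ as a DG/$A_\infty$ functor on this line-bundle subcategory and pass to the derived Morita extension.

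Next, I would verify full faithfulness by comparing morphism complexes on the generators. On the coherent side, $\Hom^\bullet(\mathcal{O}(D_1), \mathcal{O}(D_2))$ is computed by the cohomology of $\mathcal{O}(D_2 - D_1)$, which decomposes into $M$-weight spaces via the standard toric \v{C}ech computation. On the constructible side, $\Hom^\bullet(P(\chi_{\mathcal{L}_1}), P(\chi_{\mathcal{L}_2}))$ is a hyperplane-arrangement cohomology computation on $\mathbb{T}^n$, with each weight space contributing cohomology of one stratum in the induced stratification. The weight-by-weight matching of these two sums is the combinatorial heart of \cite{FLTZ, Tr, Kuw, Zh}. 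For essential surjectivity, I would use that $\Sh(\mathbb{T}^n, \Lambda_\Sigma)$ is compactly generated by the standard/costandard sheaves attached to the cells of the stratification refining $\Lambda_\Sigma$, each of which lies in the thick closure of the image of $\kappa$; equivalently, one invokes a wrapping-invariance argument for constructible sheaves on $T^*\mathbb{T}^n$ relative to $\Lambda_\Sigma$.

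The hardest step will be the combinatorial matching of morphism complexes, since one must track sign, shift, and standard-versus-costandard conventions (as in \Cref{rem:diff signs} and \Cref{prop: zhou tr comp}) and align the toric \v{C}ech weight decomposition with the microlocal cell decomposition on $\mathbb{T}^n$. The generation step is also delicate: passage between the equivariant picture on the universal cover $\widetilde M_\R$ and the non-equivariant picture on $\mathbb{T}^n = \widetilde M_\R / M$ must be done compatibly with the identification $\Lambda_\Sigma = \Lambda_{\widetilde \Sigma}|_{\widetilde M_\R/M}$, and the compactness/finiteness subtleties in the passage from $\widetilde \Sigma$ (non-complete) to $\Sigma$ (complete and projective) are what ultimately force the projectivity hypothesis in the statement.
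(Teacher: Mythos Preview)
The paper does not supply its own proof of this theorem: it is quoted verbatim as \cite[Theorem 2]{Zh} and used as a black box. The only discussion of the argument is the remark immediately following \Cref{theorem: bigdiagram}, which summarizes Zhou's strategy: one shows that the specific finite Bondal collection $\{\mathcal{O}_{X_\Sigma}(-a)\otimes \omega_{X_\Sigma}^{-1} : [a]\in \im\Phi\}$ is sent by $\kappa$ to the stalk probe sheaves of $\Sh(\mathbb{T}^n,\Lambda_\Sigma)$ (up to shift), and then matches the dg endomorphism algebras of these two finite generating sets. Generation on the constructible side comes for free because probe sheaves always generate \cite[Theorem 2]{Zh}; generation on the coherent side is the content of \Cref{prop: ring iso} and its derived enhancement.

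Your sketch is in the same spirit but contains a genuine error and a gap. The error: there is no ``Bondal's theorem on the existence of a full strong exceptional collection of line bundles'' for arbitrary smooth projective toric varieties. King's conjecture, which asserted exactly this, is false; explicit toric counterexamples are known. The weaker statement you actually need, that line bundles split-generate $D^b\Coh(X_\Sigma)$, is true, but it is not a consequence of any exceptional-collection result; it follows from the Cox ring presentation (every coherent sheaf admits a locally free resolution whose terms are sums of line bundles). The gap: your essential-surjectivity argument is vague precisely where Zhou's is sharp. You appeal to ``standard/costandard sheaves attached to cells of the stratification refining $\Lambda_\Sigma$'' lying in the thick closure of the image, but you do not say which stratification or why those sheaves lie in $\Sh(\mathbb{T}^n,\Lambda_\Sigma)$ rather than a larger category. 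Zhou sidesteps this by working with probe sheaves, which are intrinsic to $\Lambda_\Sigma$ and generate by construction; identifying their coherent preimages as a specific finite list of line bundles is then the whole point.
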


\subsection{Stratified Tori and Toric Geometry}

Since $X_{\Sigma}$ is an $n$-dimensional smooth projective toric variety with a complete fan, the short exact sequence for the Picard group of $X_{\Sigma}$ allows us to construct both a cube and CW stratification of  $\mathbb{T}^n$ as in \S\ref{subsec: cube} and \S\ref{subsec: cell}.

In \S\ref{subsec: probe}, we considered a collection of sheaves (called probe sheaves) on the real torus associated with its cube and CW stratification. Recall that, to each stratum of the cube or CW stratification, we associated a probe sheaf. Since every stratum is labeled by an element of the poset, and for cube stratification the poset is a Picard group, we automatically have a correspondence between strata, probe sheaves, and line bundles. This correspondence is, in fact, a manifestation of mirror symmetry. More precisely, we have the following proposition:
\begin{proposition} \cite[Prop. 5.5]{HPA} \label{prop: ring iso}
There is a ring isomorphism:
\begin{align*}
\alpha: A^{c}_{Ent} \cong \End(\bigoplus_{[a] \in \im(\Phi_c)} P_{[a]}) \to \End ( \bigoplus_{[a] \in \im(\Phi_c)} \mathcal O_{X_{\Sigma}}(-a) \ )
\end{align*}
\end{proposition}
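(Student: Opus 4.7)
The plan is to deduce this ring isomorphism directly from the Coherent-Constructible Correspondence. By \Cref{prop: entrancepathalgebraisend}, the left-hand side is already identified with $A^c_{Ent}$, so the real content is the second isomorphism, between the endomorphism algebra of the collection of probe sheaves and the endomorphism algebra of the collection of line bundles $\{\mathcal O_{X_\Sigma}(-a)\}_{[a]\in\im(\Phi_c)}$. The backbone of the argument is that \Cref{mainccc} provides a fully faithful functor $\kappa:\Coh(X_\Sigma)\to\Sh(\T^n,\Lambda_\Sigma)$; any ring isomorphism at the level of individual Hom spaces compatible with composition thus upgrades automatically to a ring isomorphism of the full endomorphism algebras, once we exhibit a stratum-by-stratum identification $\kappa(\mathcal O_{X_\Sigma}(-a))\simeq\mathcal P_{[a]}$.

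The strategy for producing this identification is to compare the two explicit descriptions. On the coherent side, Zhou's recipe for $\kappa(\mathcal O_{X_\Sigma}(-a))$ is built, via the \v Cech complex attached to the fan $\Sigma$, from the twisted polytope $\chi_{\mathcal O(-a)}$ of the line bundle; concretely it is a complex of standard sheaves whose support descends from the polytopal region cut out in $\widetilde M_\R$ by the inequalities defining $\mathcal O(-a)$ in the Cox short exact sequence. On the constructible side, \Cref{prop: exit/ent sheaves} together with \Cref{prop: ent/exit are homeo to balls} gives $\mathcal P_{[a]}\simeq(\pi\circ i_{\tilde y})_{!}\mathsf C_{\R^n_{\Ent}(\tilde y)}$ with $\R^n_{\Ent}(\tilde y)\simeq\R^{n+k}_{<-a-K}\cap i_\R(\R^n)$. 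Both recipes are built from exactly the same Cox exact sequence data, so the match reduces to checking that the half-open region $-a+[0,1)^{n+k}$, intersected with $i_\R(\R^n)$ and projected to $\T^n$, is precisely the support of the standard complex Zhou assigns to $\mathcal O(-a)$. One also needs to verify singular-support containment $SS(\mathcal P_{[a]})\subseteq\Lambda_\Sigma$, which follows since each stratum arises as a slice of a coordinate half-open cube through the linear subspace $i_\R(\R^n)$, and differentials of these defining inequalities yield conormal directions that lie exactly in the conical Lagrangian built from the dual cones $\sigma^\vee$.

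With $\kappa(\mathcal O_{X_\Sigma}(-a))\simeq\mathcal P_{[a]}$ in hand, define $\alpha$ to be the composite
\[
A^c_{Ent}\xrightarrow{\sim}\End\Bigl(\bigoplus_{[a]\in\im(\Phi_c)}\mathcal P_{[a]}\Bigr)\xrightarrow{\kappa^{-1}}\End\Bigl(\bigoplus_{[a]\in\im(\Phi_c)}\mathcal O_{X_\Sigma}(-a)\Bigr),
\]
where the first arrow is \Cref{prop: entrancepathalgebraisend}. Both arrows are ring isomorphisms (the first by \Cref{prop: entrancepathalgebraisend}, the second because $\kappa$ is a quasi-equivalence, hence induces isomorphisms on derived Hom that are compatible with composition), so $\alpha$ is a ring isomorphism.

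The main obstacle is the explicit polytopal matching in the middle paragraph. There are three conventions in play: the sign discrepancy between the Zhou and Treumann skeleta noted in \Cref{rem:diff signs}, the standard-versus-costandard distinction of \Cref{prop: zhou tr comp}, and the shift by $K=(-1,\ldots,-1)$ that appears in the entrance space formula versus the half-open cube used to define $\Phi_c$. Navigating these carefully is what makes step three more than a formal observation; once done, everything else is immediate from full faithfulness of $\kappa$.
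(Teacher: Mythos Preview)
Your approach has a genuine gap at the key identification step. You claim $\kappa(\mathcal O_{X_\Sigma}(-a))\simeq\mathcal P_{[a]}$, where $\mathcal P_{[a]}$ is the cube-stratification probe in $Sh_{S^c}(\T^n)$, but this is not what the CCC produces. As the paper records (see the proofs of \Cref{theorem:big diagram1} and \Cref{theorem: bigdiagram}, and the remark following \Cref{derexod}), Zhou's computation gives $\kappa(\mathcal O_{X_\Sigma}(-a))\cong\mathcal I_{[a]}^{\Lambda_\Sigma}$, the \emph{co-probe} in $\Sh(\T^n,\Lambda_\Sigma)$, while the $\Lambda_\Sigma$-\emph{probe} corresponds under $\kappa$ to $\mathcal O_{X_\Sigma}(-a)\otimes\omega_{X_\Sigma}^{-1}$ up to shift. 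Consequently your singular-support check $SS(\mathcal P_{[a]})\subseteq\Lambda_\Sigma$ also fails: the entrance-space sheaf $\mathcal P_{[a]}$ has conormal directions coming from \emph{all} coordinate cones of $\widetilde\Sigma_{\mathbb A^{n+k}}$, not just those surviving in $\widetilde\Sigma$, so in general $SS(\mathcal P_{[a]})\subseteq\Lambda_c$ but $SS(\mathcal P_{[a]})\not\subseteq\Lambda_\Sigma$ (equality $\Lambda_\Sigma=\Lambda_c$ holds only in special cases such as products of projective spaces). The ``polytopal matching'' you sketch therefore does not go through, and replacing $\mathcal P_{[a]}$ by the correct object would reintroduce a Serre twist or a wrapping functor, neither of which is obviously compatible with the endomorphism-algebra comparison you want.

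There is also a structural issue: in the paper's logical flow, \Cref{prop: ring iso} is an \emph{input} to \Cref{theorem:big diagram1} (it is used to identify $A$ with $A^c_{Ent}$ in the statement), so deducing it from the commutative square would be circular. The actual argument, given in \cite[Prop.~5.5]{HPA}, avoids the CCC entirely: one computes $\Hom(\mathcal O(-a),\mathcal O(-b))\cong H^0(X_\Sigma,\mathcal O(a-b))$ directly as the span of Cox monomials of multidegree $[a-b]$, i.e.\ the lattice points of $(-a+\Z^{n+k}_{\geq 0})\cap\pi^{-1}([b])$, and matches this combinatorially with the description of $\Hom(\mathcal P_{[a]},\mathcal P_{[b]})$ as lifts $\widetilde b\in\R^n_{\Ent}(\widetilde a)\cap\pi^{-1}([b])$ from \Cref{prop:describingent/exit paths}. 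Both sides are indexed by the same finite set of lattice points, and concatenation of paths corresponds to multiplication of monomials, giving the ring isomorphism by an elementary lattice-point count rather than by transporting through $\kappa$.
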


\begin{example}
The short exact sequence for the Picard group of $\P^2$ is given in \Cref{P2strata}. By \Cref{prop: ring iso} and \Cref{prop: entrancepathalgebraisend}, the endomorphism algebra of the direct sum of line bundles $\mathcal{O}(-2), \mathcal{O}(-1), \mathcal{O}$ is isomorphic to the quiver path algebra with relations from \Cref{entrancepath}. Moreover, the entrance paths $a$ and $a'$ correspond to the morphism $x_{0}$, $b$ and $b'$ to the morpshism $x_2$, and $c$ and $c'$ to $x_1$. Thus, the ideal of homotopy relations on the homotopy classes of paths on the torus becomes the ideal of commutativity relations on $x_0$, $x_1$, and $x_2$ i.e.\ $\langle x_0x_1 - x_1x_0, x_0x_2 - x_2x_0, x_1x_2 - x_2x_1 \rangle$.
\end{example}

\subsection{Establishing the Categorical Relationship}\label{sub: categorical relationship}

To connect the CCC to stratified tori, we first define Lagrangian skeleta coming from the cube and CW stratification:

\begin{align*}
\Lambda_c & := \bigcup_{ [a] \in \im(\Phi_c)} ss(S^{c}_{[a]})
\end{align*}

\begin{align*}
\Lambda_{cw} & := \bigcup_{ [(a,J)] \in \im(\Phi_{cw})} ss(\mathcal{S}^{cw}_{[(a,J)]})
\end{align*}
where $ss(\mathcal{S}_{\alpha})$ stands for a singular support of sheaf $i_{!}\C_{S_{\alpha}}$.

The following two propositions will be used to relate the categories appearing in the CCC to categories of S-constructible sheaves.
\begin{proposition}\label{prop:comparing skeleta}
There are containments of Lagrangian skeleta
\[
\Lambda_{\Sigma} \subseteq \Lambda_{c} \subseteq \Lambda_{cw}
\]
which induces inclusions of full subcategories
\[
\Sh(\mathbb{T}^n,  \Lambda_\Sigma) \subseteq 
\Sh(\mathbb{T}^n, \Lambda_{c}) \subseteq \Sh(\mathbb{T}^n, \Lambda_{cw}).
\]
\end{proposition}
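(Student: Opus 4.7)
The plan has two logically independent parts. First I would handle the simpler inclusion $\Lambda_c \subseteq \Lambda_{cw}$. The construction of the CW stratification in \S\ref{subsec: cell} was explicitly designed as a refinement of the cube stratification: every cube stratum $S^c_{[a]}$ decomposes as a disjoint union of CW strata $S^{cw}_{[(a',J)]}$ lying inside it. It follows that the characteristic sheaf $\mathcal{S}^c_{[a]} = i_! \C_{S^c_{[a]}}$ is also constructible with respect to the CW stratification (its restriction to each CW stratum is either zero or the constant sheaf $\C$). Hence $\mathcal{S}^c_{[a]} \in \Sh(\T^n, \Lambda_{cw})$, so $ss(\mathcal{S}^c_{[a]}) \subseteq \Lambda_{cw}$, and taking the union over $[a]$ gives the desired containment.

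For the first inclusion $\Lambda_\Sigma \subseteq \Lambda_c$, I would compute the singular support of each $\mathcal{S}^c_{[a]}$ by pulling back to the universal cover. A connected component of $S^c_{[a]}$ in $\R^n$ is, by \Cref{homeobt}, the convex set $i_\R(\R^n) \cap (-a + [0,1)^{n+k})$, cut out inside $i_\R(\R^n) \cong \R^n$ by the affine hyperplanes $\{x_j = -a_j\}$ and $\{x_j = -a_j + 1\}$ for $j = 1,\ldots,n+k$. Since $i_! \C$ of an open convex polytope has singular support generated by the outward conormal cones of its faces, and the conormal of the facet coming from $\{x_j = -a_j\}$ is (up to sign) the image of the standard basis covector $e_j^\vee$ under the dual of $i$, which by the Picard sequence is precisely the ray generator $v_j$ of $\Sigma$, we see that at a corner lying on the facets indexed by $j \in \sigma(1)$ for some cone $\sigma \in \Sigma$, the cotangent fibre of $ss(\mathcal{S}^c_{[a]})$ contains exactly the cone $\sigma$.

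Next I would identify the base of these contributions. The projection to $\T^n$ of the set of corner points in $\R^n$ where the facets indexed by $\sigma(1)$ meet is a translate of the subtorus $\sigma^\perp/(\sigma^\perp \cap M)$. Varying $[a]$ over $\Z^{n+k}/\Z^n$ sweeps out all such translates, and their union is all of $\sigma^\perp/(\sigma^\perp \cap M)$. Summing the resulting contributions $\sigma^\perp/(\sigma^\perp \cap M) \times \sigma$ over $\sigma \in \Sigma$ exhibits $\Lambda_\Sigma$ as a subset of $\Lambda_c$. The inclusions of full subcategories in the statement are then immediate from the definition $\Sh(\T^n, \Lambda) = \{ \mathcal{F} \in D_c(\T^n) : ss(\mathcal{F}) \subseteq \Lambda\}$.

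The main obstacle is the combinatorial verification that every cone of $\Sigma$ is actually realized as a corner conormal cone of some cube stratum, rather than only the cones of the Cox fan $\widetilde{\Sigma}$ appearing. This amounts to checking that for every $\sigma \in \Sigma$ one can choose $a \in \Z^{n+k}$ such that the intersection $i_\R(\R^n) \cap \{x_j = -a_j : j \in \sigma(1)\} \cap \overline{-a + [0,1)^{n+k}}$ is non-empty and genuinely lies in the closure of the open stratum; the assumptions in \S\ref{subsec: cube} that the effective cone is strongly convex and $\Sigma$ is complete are precisely what make this bookkeeping go through, via the correspondence between cones in $\widetilde{\Sigma}$ and cones in $\Sigma$ under the Cox quotient.
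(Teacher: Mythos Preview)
Your argument for $\Lambda_c \subseteq \Lambda_{cw}$ matches the paper's: both invoke that the CW stratification refines the cube stratification, so singular supports of cube-strata sheaves lie in $\Lambda_{cw}$.

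For $\Lambda_\Sigma \subseteq \Lambda_c$, the paper takes a much shorter route that sidesteps all of your conormal bookkeeping. It observes that $\Lambda_c$ is itself a fiber skeleton in the sense of \S\ref{sub:CCC}: namely $\Lambda_c = \Lambda_{\widetilde{\Sigma}_{\A}}|_{M_\R/M}$, where $\widetilde{\Sigma}_{\A}$ denotes the \emph{full} fan of $\A^{\Sigma(1)}$ (all cones $\R_{\geq 0}^J$ for $J \subseteq \Sigma(1)$), not just the Cox subfan $\widetilde{\Sigma}$. Since $\widetilde{\Sigma} \subset \widetilde{\Sigma}_{\A}$ as fans and $\Lambda_\Sigma = \Lambda_{\widetilde{\Sigma}}|_{M_\R/M}$ by definition, the inclusion $\Lambda_\Sigma \subseteq \Lambda_c$ is immediate from the monotonicity of the fiber-skeleton construction in the fan. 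Your direct computation of singular supports would eventually reproduce this identification, but the paper recognizes it in one line.

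On the obstacle you flag: your framing is slightly off. The cones of $\widetilde{\Sigma}$ and of $\Sigma$ are in bijection under the Cox quotient, so you are not in danger of seeing ``only'' cones of $\widetilde{\Sigma}$; those already account for every cone of $\Sigma$. The cube strata, coming from the full fan of affine space, actually contribute \emph{more} conormal cones than just those in $\Sigma$ (namely all cones generated by arbitrary subsets of rays), which is precisely why $\Lambda_\Sigma \subseteq \Lambda_c$ is typically strict. The genuine bookkeeping in your approach lies instead on the \emph{base}: verifying that for each $\sigma \in \Sigma$ the corner loci with conormal cone containing $\sigma$ sweep out all of $\sigma^\perp/(\sigma^\perp \cap M)$. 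This does hold, but the paper's identification of $\Lambda_c$ as a larger fiber skeleton absorbs that check into the general construction of \S\ref{sub:CCC}.
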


\begin{proof}
Observe that
\[
\Lambda_{c} = \Lambda_{\widetilde{\Sigma}_{\A^n}}|_{M_\mathbb{R}/M}
\]
Since $\widetilde{\Sigma} \subset \widetilde{\Sigma}_{\A^n}$, the first inclusion follows. The second inclusion is just by definition as the CW stratification is a refinement of the cube stratification.
\end{proof}

\begin{proposition}\label{derexod}
The following categories are equivalent: 
\begin{align*}
\Sh(\mathbb{T}^n, \Lambda_{c}) \cong D(Sh_{S^c}(\mathbb{T}^n)) \cong D(mod-A^c_{Ent})
\end{align*}
\begin{align*}
\Sh(\mathbb{T}^n, \Lambda_{cw}) \cong D(Sh_{S^{cw}}(\mathbb{T}^n)) \cong D(mod-A^{cw}_{Ent})
\end{align*}
\end{proposition}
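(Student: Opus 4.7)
The plan is to establish each equivalence separately, treating the cube and CW cases in parallel since the arguments are formally identical. For each case, the chain of equivalences splits naturally into two pieces: one coming from microlocal sheaf theory (the first equivalence) and one coming from exodromy together with the path algebra identification (the second equivalence).

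First I would dispatch the second equivalence in each line. Proposition \ref{exodromy} gives an equivalence of abelian categories $Sh_{S}(\T^n) \cong \RMod \End(\bigoplus_{x \in \im \Phi} P_x)$ via the stalk functor, and Proposition \ref{prop: entrancepathalgebraisend} identifies this endomorphism ring with $A_{Ent}$. Since an equivalence of abelian categories induces an equivalence of their bounded derived categories, passing to derived categories yields $D(Sh_{S}(\T^n)) \cong D(\RMod A_{Ent})$. This is automatic and requires no further input.

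The first equivalence is the substantive part. The claim is that a sheaf on $\T^n$ has singular support contained in $\Lambda_{c}$ (respectively $\Lambda_{cw}$) if and only if it is constructible with respect to $S^{c}$ (respectively $S^{cw}$). By construction, $\Lambda_{c} = \bigcup_{[a]} ss(\mathcal{S}^{c}_{[a]})$ is the union of the singular supports of the costandard extensions $i_!\C_{S^{c}_{[a]}}$. By Proposition \ref{homeobt} each stratum $S^{c}_{[a]}$ is the image of a convex locally closed subset under the universal covering map; in particular it is a smooth locally closed submanifold of $\T^n$, so $ss(i_!\C_{S^{c}_{[a]}})$ is the closure of the conormal bundle $T^{*}_{S^{c}_{[a]}}\T^n$. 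Thus $\Lambda_{c}$ is the conormal Lagrangian of the stratification $S^{c}$, and similarly $\Lambda_{cw}$ is the conormal Lagrangian of $S^{cw}$ using Proposition \ref{prop: CW stratification of T^n}. I would then invoke the standard microlocal result of Kashiwara--Schapira: for a (Whitney, or more generally conically smooth) stratification $S$ of a manifold, a sheaf lies in $\Sh(X, \Lambda_S)$ if and only if it is $S$-constructible. This gives $\Sh(\T^n, \Lambda_c) \cong D(Sh_{S^c}(\T^n))$ and similarly for the CW case.

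The main obstacle I expect is verifying that the cube and CW stratifications satisfy enough regularity for the Kashiwara--Schapira equivalence to apply. The strata here are not quite classical Whitney stratifications because the cube strata are intersections of half-open boxes with a linear subspace and can fail to be locally closed in a naive sense. However, because the strata are all convex and defined by linear/affine inequalities, the Whitney conditions hold trivially on the universal cover, and hence descend to $\T^n$. Alternatively, one can bypass Whitney conditions by working directly with the pushforward/pullback characterization: any $S$-constructible sheaf is an iterated extension of $i_!\C_{S_\alpha}$'s (and in fact any extension by homotopy classes of entrance paths, by exodromy), each of which tautologically has singular support in $\Lambda$; conversely, constancy along strata is a direct consequence of having singular support transverse to them. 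Combining this with a standard unbounded/bounded argument to pass to derived categories completes the proof.
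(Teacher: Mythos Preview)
Your treatment of the second equivalence in each line matches the paper exactly: derive Proposition~\ref{exodromy} and use Proposition~\ref{prop: entrancepathalgebraisend} to identify the endomorphism ring with $A_{Ent}$.

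For the first equivalence, however, the paper takes a genuinely different route. It does not attempt to identify $\Lambda_c$ with a conormal variety of the stratification or invoke any Kashiwara--Schapira/Whitney machinery. Instead it runs a generator argument: (i) since $D(\RMod A^c_{Ent})$ is generated by the free module, exodromy together with Proposition~\ref{prop: simpleimages} shows that $D(Sh_{S^c}(\T^n))$ is generated by the probe sheaves $\mathcal P_{[a]}$; (ii) the explicit description of $\mathcal P_{[a]}$ via entrance spaces (Proposition~\ref{prop: exit/ent sheaves}) shows $ss(\mathcal P_{[a]}) \subseteq \Lambda_c$, so the fully faithful inclusion $D(Sh_{S^c}(\T^n)) \hookrightarrow D_c(\T^n)$ factors through $\Sh(\T^n,\Lambda_c)$; (iii) by \cite[Lemma 2.11]{Rouquier} these same objects are the stalk probes in $\Sh(\T^n,\Lambda_c)$, and such probes generate by \cite[Theorem 2]{Zh}. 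A fully faithful functor whose image contains a generating set is essentially surjective, and the equivalence follows.

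The virtue of the paper's approach is exactly that it bypasses the obstacle you identify: there is no need to verify Whitney conditions for the half-open cube strata, nor to argue that $\Lambda_c$ restricts to the zero section over each stratum, nor to match t-structures. Your approach is the classical microlocal one and is not wrong in spirit, but making it rigorous here would require precisely the regularity checks you flag as uncertain; the paper's generator argument trades all of that for the already-established probe machinery of \S\ref{subsec: probe} together with one external input (Zhou's generation result).
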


\begin{proof}
The proofs of these statements are more general and only use the fact that the stratifications $S^c, S^{cw}$ which satisfy the requirements of \Cref{prop: exit/ent sheaves} and \Cref{exodromy}.  We give the proof for $S^c$.

The equivalence  $D(Sh_{S^c}(\T^n)) \cong D(mod-A^c_{Ent})$ is just \Cref{exodromy}.  Since  $D(mod-A^c_{Ent})$ is generated by the free module $A^c_{Ent} = \bigoplus_{[a] \in \im \Phi^c} P_{[a]}$. It follows from \Cref{prop: simpleimages} that  $D(Sh_{S^c}(\T^n))$ is generated by the probe sheaves $\mathcal P_{x} \in Sh_{S^c}(\T^n)$.  

Now, the fully-faithful functor 
\[
inc: D(Sh_{S^{c}}(\T^n)) \to D_c(\T^n)
\]
lands in $\Sh(\T^n, \Lambda_c)$ since $ss(\mathcal P_{x}) \subseteq \Lambda_{c}$ by \Cref{prop: exit/ent sheaves}. On the other hand, $\mathcal P_{x}$ is the stalk probe at $x$ by \cite[Lemma 2.11]{Rouquier} and $\Sh(\T^n, \Lambda_c)$ is generated by the probes \cite[Theorem 2]{Zh}. 
 Hence, the fully-faithful inclusion is an equivalence.
\end{proof}

\begin{remark}
The categories of sheaves singularly supported on the cube and CW skeleta are in some sense nicer to work with because they admit a collection of generators that are pure objects and so we can construct resolutions by these generators in classical sense.  These resolutions are then eventually transferred through mirror symmetry to produce resolutions in toric geometry.  
\end{remark}

\begin{remark}As the notion of a (co)probe sheaf makes sense in the derived setting of category $\Sh(\T,  \Lambda_\Sigma)$ (see \Cref{lagranprobe}), and the collection of (co)probe sheaves automatically generates the category, toric  Homological Mirror Symmetry itself can be proven by comparing (co)probe sheaves and their mirrors.  This is the approach by Zhou to prove \Cref{mainccc}.  Indeed, he demonstrates that the image of the collection of line bundles $\mathcal{O}_{X_{\Sigma}}(-a)\otimes{\omega^{-1}_{X_{\Sigma}}}, [a] \in \im{\Phi}$ under $\kappa$ are probe sheaves (up to a homological shift) and compares their dg endomorphism algebra (a generalization of \Cref{prop: ring iso}). Notably, the probe sheaves are not necessarily pure sheaves but rather a complex of constructible sheaves associated to a line bundle called a twisted polytope sheaf.
\end{remark}

We are finally ready to establish the main results relating the three categories:
\[
\Sh(\mathbb{T}^n,  \Lambda_\Sigma) \subseteq 
\Sh(\mathbb{T}^n, \Lambda_{c}) \subseteq \Sh(\mathbb{T}^n, \Lambda_{cw}).
\]. 

If $X$ is a smooth manifold and $\Lambda'\subset T^*X$ is a Lagrangian skeleton with a closed subskeleton $\Lambda\subset \Lambda'$, there are two functors (called wrapping functors)
$$
\rho^L: \Sh(X,  \Lambda_{\Sigma'})\rightarrow \Sh(X,  \Lambda_\Sigma)
$$
$$
\rho^R: \Sh(X,  \Lambda_{\Sigma'})\rightarrow \Sh(X,  \Lambda_\Sigma)
$$
that are left and right adjoints respectively to the inclusion $\rho: \Sh(X,  \Lambda_\Sigma) \rightarrow \Sh(X,  \Lambda_{\Sigma'})$ (see \cite{Kuo}). Where necessary, we will put a subscript $\rho_{\Lambda}^R$ to indicate the skeleton we are wrapping to.

A very important property of functors $\rho_L$ and $\rho_R$ is that they behave nicely with respect to (co)probe sheaves.

\begin{lemma}\cite[Lemma 2.1]{Rouquier} \label{lem: probe to probe}
Let $\Lambda \subseteq \Lambda'$ be a closed subskeleton, then
\[
\rho^{L}(\mathcal P_v^{\Lambda'}) = \mathcal P_v^\Lambda
\]
\[
\rho^{R}(\mathcal I_v^{\Lambda'}) = \mathcal I_v^\Lambda
\]
\end{lemma}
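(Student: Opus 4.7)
The plan is to deduce the lemma directly from Yoneda's lemma applied to the defining (co)representable property of (co)probe sheaves, combined with the adjunction $(\rho^L, \rho)$ and $(\rho, \rho^R)$. The key observation is that although $\rho : \Sh(X, \Lambda) \hookrightarrow \Sh(X, \Lambda')$ changes the ambient category, it does not alter the underlying sheaf on $X$, so it preserves stalks.

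For the probe statement, I would compute, for an arbitrary $F \in \Sh(X, \Lambda)$,
\begin{align*}
\Hom_{\Sh(X, \Lambda)}\bigl(\rho^L(\mathcal P_v^{\Lambda'}),\, F\bigr)
& \cong \Hom_{\Sh(X, \Lambda')}\bigl(\mathcal P_v^{\Lambda'},\, \rho(F)\bigr) \\
& \cong \rho(F)_v \\
& \cong F_v,
\end{align*}
where the first isomorphism uses that $\rho^L$ is left adjoint to $\rho$, the second uses the defining property of the probe sheaf $\mathcal P_v^{\Lambda'}$ in $\Sh(X,\Lambda')$ (see \Cref{probe} and \Cref{lagranprobe}), and the third is because $\rho$ is the inclusion of a full subcategory of $D_c(X)$ and hence preserves stalks. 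Thus $\rho^L(\mathcal P_v^{\Lambda'})$ corepresents the stalk functor on $\Sh(X,\Lambda)$, so by Yoneda it is isomorphic to $\mathcal P_v^{\Lambda}$.

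The co-probe statement is entirely dual. For any $F \in \Sh(X,\Lambda)$, the adjunction $(\rho, \rho^R)$ gives
\begin{align*}
\Hom_{\Sh(X, \Lambda)}\bigl(F,\, \rho^R(\mathcal I_v^{\Lambda'})\bigr)
& \cong \Hom_{\Sh(X, \Lambda')}\bigl(\rho(F),\, \mathcal I_v^{\Lambda'}\bigr) \\
& \cong (\rho(F)_v)^{*} \\
& \cong (F_v)^{*},
\end{align*}
so $\rho^R(\mathcal I_v^{\Lambda'})$ represents the functor $F \mapsto (F_v)^*$ on $\Sh(X,\Lambda)$ and is therefore identified with $\mathcal I_v^{\Lambda}$ by Yoneda.

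Since the argument is essentially a bookkeeping exercise in adjunctions, I do not anticipate a substantive obstacle. The only point requiring care is the identification $\rho(F)_v \cong F_v$, i.e.\ ensuring that the stalk functor used to define (co)probes is the same in both categories; this follows because $\rho$ is a fully faithful inclusion coming from a constraint on singular support rather than any modification of the sheaf, so stalks are unchanged.
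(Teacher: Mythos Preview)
Your argument is correct and is exactly the standard Yoneda-plus-adjunction proof; the paper itself does not give a proof but simply cites \cite[Lemma 2.1]{Rouquier}, where the same adjunction argument is used.
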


\begin{remark}
One should be careful when using the right wrapping functor $\rho^{R}$ since it generally does not preserve compact objects. This becomes relevant when one works with toric varieties that are not smooth and complete (see \cite[Theorem 1.2]{Kuw}).
\end{remark}

The following diagrams are our main tools for transferring resolutions through mirror symmetry.  
\begin{theorem}\label{theorem:big diagram1}
Let $T = \bigoplus_{[a] \in \im(\Phi_c)} \mathcal{O}_{X_\Sigma}(-a)$ and $A = \End(T) \cong A^{c}_{Ent}$ by \Cref{prop: ring iso}. Then, the following diagram commutes:
\[ 
\begin{tikzcd}
 D(Coh(X_\Sigma)) & D(\RMod A^{c}_{Ent}) \arrow[shift right, swap]{l}{{-}\otimes_{A}{T}} \arrow{d}{\rotatebox{90} \: F^{-1}} \\
\Sh(\T^n, \Lambda_{\Sigma}) \arrow{u}{\rotatebox{90} \: S  \circ \kappa^{-1}} & D(Sh_{S^c}(\mathbb{T}^n)) \arrow[shift right, swap]{l}{\rho_{\Lambda}^L}
\end{tikzcd}
\]
\end{theorem}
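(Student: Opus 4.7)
The plan is to verify commutativity of the square on a compact generating set of $D(\RMod A^c_{Ent})$ and then extend by continuity. All four functors in the diagram are triangulated and cocontinuous: $-\otimes_A T$, $F^{-1}$, and $\rho_\Lambda^L$ are left adjoints, while $S \circ \kappa^{-1}$ is the composition of the inverse CCC equivalence with the shift-and-twist functor $S$ (encoding the $\omega^{-1}_{X_\Sigma}$ normalization appearing in Zhou's proof of \Cref{mainccc}), hence also triangulated and cocontinuous. The category $D(\RMod A^c_{Ent})$ is compactly generated by the indecomposable projectives $\{P_{[a]}\}_{[a] \in \im \Phi_c}$, so it suffices to produce a natural isomorphism between the two compositions on the full subcategory spanned by these generators.

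On objects, the upper route yields $P_{[a]} \otimes_A T = e_{[a]} A \otimes_A T \cong e_{[a]} T \cong \mathcal O_{X_\Sigma}(-a)$, since $e_{[a]}$ is the idempotent projecting $T$ onto its $\mathcal O(-a)$ summand. Along the lower route, $F^{-1}(P_{[a]}) \cong \mathcal P_{[a]}$ by \Cref{prop: simpleimages}, and $\rho_\Lambda^L(\mathcal P_{[a]}) \cong \mathcal P_{[a]}^{\Lambda_\Sigma}$ by \Cref{lem: probe to probe}. The functor $S$ is defined precisely so that $S \circ \kappa^{-1}(\mathcal P_{[a]}^{\Lambda_\Sigma}) \cong \mathcal O_{X_\Sigma}(-a)$, using Zhou's identification of $\kappa(\mathcal O(-a) \otimes \omega^{-1}_{X_\Sigma})$ with $\mathcal P_{[a]}^{\Lambda_\Sigma}$ up to a homological shift. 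Hence the two compositions agree object-wise on each generator.

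To upgrade this object-level agreement to a natural isomorphism, one restricts both compositions to the endomorphism algebra of $\bigoplus_{[a]} P_{[a]}$, which is $A^c_{Ent}$ by construction. The upper composition identifies $A^c_{Ent}$ with $\End(T) = \End(\bigoplus_{[a]} \mathcal O(-a))$ tautologically via the presentation $A = \End(T)$. The lower composition realizes the same identification in three steps: $F^{-1}$ produces $A^c_{Ent} \cong \End(\bigoplus \mathcal P_{[a]})$ by \Cref{exodromy} and \Cref{prop: entrancepathalgebraisend}; $\rho_\Lambda^L$ transports this to $\End(\bigoplus \mathcal P_{[a]}^{\Lambda_\Sigma})$ because left adjoints respect composition and wrapping sends probes to probes by \Cref{lem: probe to probe}; finally $S \circ \kappa^{-1}$ lands in $\End(T)$. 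The main obstacle is to verify that this composite equals the ring isomorphism $\alpha$ of \Cref{prop: ring iso} — that is, that Zhou's comparison of probe sheaves with line bundles on endomorphism algebras is compatible with the exodromy equivalence on the constructible side. This coherence is essentially the content of \Cref{prop: ring iso} combined with the twisted polytope description of $\kappa$ on line bundles used to construct $\alpha$.

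Once both compositions agree on the full subcategory generated by $\{P_{[a]}\}$ together with its morphisms, the standard principle that two cocontinuous triangulated functors on a compactly generated triangulated category agreeing on a compact generating subcategory with its morphism structure are naturally isomorphic everywhere yields the desired natural isomorphism on all of $D(\RMod A^c_{Ent})$, completing the proof.
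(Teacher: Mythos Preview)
Your proposal is correct and follows essentially the same approach as the paper: verify commutativity on the indecomposable projectives $P_{[a]}$ by tracking them through each functor (\Cref{prop: simpleimages}, \Cref{lem: probe to probe}, and Zhou's identification of probes with line bundles), then conclude by generation. In fact you are more careful than the paper's own proof, which checks only object-level agreement on the $P_{[a]}$ and does not explicitly address the morphism-level naturality or the cocontinuity argument needed to extend to the whole category; your third paragraph correctly isolates the compatibility of the endomorphism-algebra identifications with $\alpha$ as the point where the real content lies.
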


\begin{proof}
We check commutativity on the idecomposable projectives of $\RMod A^{c}_{Ent}$, which generate $D(\RMod A^{c}_{Ent})$. 

\begin{align*}
& S \circ \kappa^{-1} \circ \rho_{\Lambda}^L \circ F^{-1} (P_{[a]}) \cong S \circ \kappa^{-1} \circ \rho_{\Lambda}^L (\mathcal P_{[a]}) & \text{by \Cref{prop: simpleimages}} \\
& \cong S \circ \kappa^{-1}(\mathcal P_{[a]}) & \text{left wrapping takes probe sheaves to probe sheaves by \Cref{lem: probe to probe}} \\ 
& \cong \mathcal{O}_{X_{\Sigma}}(-a) & \text{by \cite[Proposition 4.2]{Zh}} \\
& \cong P_{[a]}\otimes_{A}{T}
\end{align*}
\end{proof}

\begin{theorem}\label{theorem: bigdiagram}
The following diagram commutes:
\[ 
\begin{tikzcd}
 D(Coh(X_\Sigma)) & D(\RMod A_{Ent}) \arrow[shift right, swap]{l}{{-}\otimes_{A}{T}} \arrow{d}{\rotatebox{90} \: F^{-1} \circ S} \\
\Sh(\T^n, \Lambda_{\Sigma}) \arrow{u}{\rotatebox{90} \: \kappa^{-1}} & D(Sh_{S^c}(\mathbb{T}^n)) \arrow[shift right, swap]{l}{\rho_{\Lambda}^R}
\end{tikzcd}
\]
\end{theorem}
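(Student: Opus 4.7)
The plan is to mirror the proof of \Cref{theorem:big diagram1}, but replace the generating class of indecomposable projectives with a cogenerating class. Since $A^c_{Ent}$ is a finite-dimensional algebra of finite global dimension (by \Cref{prop: proj dim less than n}), the indecomposable injectives $I_{[a]}$ cogenerate $D(\RMod A^c_{Ent})$, and it suffices to verify the square commutes on the objects $I_{[a]}$. The functor labeled $S$ in the diagram is naturally seen as a Serre/Nakayama-type functor carrying the projective $P_{[a]}$ to the injective $I_{[a]}$, so after composing with $S$ on the right column we can track the $I_{[a]}$'s directly and run essentially the dual argument to the one in \Cref{theorem:big diagram1}.

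Concretely, starting at the top-right with $I_{[a]}$, I would compute the image along the right-then-bottom path as follows. First, $F^{-1}(I_{[a]}) \cong \mathcal I_{[a]}$, the co-probe sheaf in $D(Sh_{S^c}(\T^n))$, by the injective half of \Cref{prop: simpleimages}. Next, applying the right-wrapping functor $\rho^R_\Lambda$ yields the co-probe $\mathcal I_{[a]}^{\Lambda_\Sigma}$ in $\Sh(\T^n,\Lambda_\Sigma)$, using the co-probe statement in \Cref{lem: probe to probe}. It remains to show that $\kappa^{-1}(\mathcal I_{[a]}^{\Lambda_\Sigma}) \cong \mathcal O_{X_\Sigma}(-a)$ (which, via the top horizontal arrow, is $P_{[a]} \otimes_A T$, and hence matches $I_{[a]}$ after the bookkeeping of $S$). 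This identification is the Verdier dual of Zhou's computation that $\kappa$ takes $\mathcal O(-a)\otimes \omega^{-1}_{X_\Sigma}$ to the probe sheaf $\mathcal P_{[a]}$ (up to shift); the needed comparison is precisely \Cref{prop: zhou tr comp}, which says $\kappa \cong \mathcal D \circ \kappa' \circ (-)^\vee$, together with the fact that Verdier duality interchanges probes and co-probes.

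The main obstacle will be keeping the bookkeeping of the functor $S$ consistent: I will need to verify that the Nakayama functor on $D(\RMod A^c_{Ent})$ is intertwined with the Serre functor of $\Sh(\T^n,\Lambda_\Sigma)$ (equivalently, with $-\otimes \omega_{X_\Sigma}[n]$ on $D(\Coh(X_\Sigma))$) through the equivalence $F^{-1}$ composed with the appropriate wrapping functor. Concretely, one has to show $S \circ F \cong F \circ \mathcal D_{\Sh}$ on the subcategory generated by probes/co-probes, or equivalently that the Serre functor on $D(\Coh(X_\Sigma))$ sends $\mathcal O_{X_\Sigma}(-a) \otimes \omega^{-1}_{X_\Sigma}$ to $\mathcal O_{X_\Sigma}(-a)$ up to the shift that appears in Zhou's theorem. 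Once this compatibility is established, one immediately gets $\rho^R_\Lambda \circ F^{-1} \circ S \, (P_{[a]}) \cong \kappa(\mathcal O(-a)) \cong \kappa( P_{[a]} \otimes_A T)$, and the commutativity follows on the generating class, hence on all of $D(\RMod A^c_{Ent})$.
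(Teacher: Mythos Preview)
Your approach is essentially the paper's own proof, but you have made it harder than it needs to be and introduced a spurious obstacle. The paper also checks commutativity on the indecomposable projectives $P_{[a]}$, and the entire argument is the four-line chain
\[
\kappa^{-1}\circ\rho^R_\Lambda\circ F^{-1}\circ S(P_{[a]})
\;\cong\;\kappa^{-1}\circ\rho^R_\Lambda\circ F^{-1}(I_{[a]})
\;\cong\;\kappa^{-1}\circ\rho^R_\Lambda(\mathcal I_{[a]})
\;\cong\;\kappa^{-1}(\mathcal I_{[a]})
\;\cong\;\mathcal O_{X_\Sigma}(-a)\;\cong\;P_{[a]}\otimes_A T,
\]
using, in order: that the Serre (Nakayama) functor on a finite-dimensional algebra sends $P_{[a]}$ to $I_{[a]}$; \Cref{prop: simpleimages}; \Cref{lem: probe to probe}; and \cite[Proposition~4.2]{Zh}. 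That is exactly the sequence you sketch in your middle paragraph.

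Where you go astray is in your final paragraph. You do \emph{not} need any compatibility of the form $S\circ F\cong F\circ\mathcal D_{\Sh}$, nor any intertwining of Serre functors across the equivalence. The only fact about $S$ that enters is the elementary one $S(P_{[a]})\cong I_{[a]}$ in $\RMod A^c_{Ent}$; after that $S$ never appears again. Likewise, the identification $\kappa^{-1}(\mathcal I_{[a]})\cong\mathcal O_{X_\Sigma}(-a)$ is a direct citation of Zhou and does not need to be rederived from the probe statement via \Cref{prop: zhou tr comp} and Verdier duality (that route works, but it is a detour). Finally, your framing ``replace projectives with a cogenerating class of injectives'' is slightly off: you are still checking on the generators $P_{[a]}$; the injectives only appear because the first functor you apply is $S$.
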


\begin{proof}

We check commutativity on the idecomposable projectives of $\RMod A^{c}_{Ent}$, which generate $D(\RMod A^{c}_{Ent})$.

\begin{align*}
& \kappa^{-1} \circ \rho_{\Lambda}^R \circ F^{-1} \circ S (P_{[a]}) \cong \kappa^{-1} \circ \rho_{\Lambda}^R \circ F^{-1} (I_{[a]}) & \text{by the properties of Serre functor} \\
& \cong \kappa^{-1} \circ \rho_{\Lambda}^R(\mathcal{I}_{[a]}) & \text{by \Cref{prop: simpleimages}} \\
& \cong \kappa^{-1}(\mathcal{I}_{[a]}) & \text{by \Cref{lem: probe to probe}} \\ 
& \cong \mathcal{O}_{X_{\Sigma}}(-a) & \text{by \cite[Proposition 4.2]{Zh}} \\
& \cong P_{[a]}\otimes_{A}{T} \\
\end{align*}    
\end{proof}

The following corollary is a surprising consequence as explained originally by Favero and Huang and to appear in forthcoming work:
\begin{corollary}\label{cor: rhom is fully faithful}
The functor $RHom(T, -):  D(Coh(X_\Sigma)) \rightarrow D(\RMod A)$ is fully-faithful.   
\end{corollary}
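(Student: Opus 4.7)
The plan is to use \Cref{theorem:big diagram1} to factor $- \otimes_A T$ into a composition whose right adjoint is explicitly computable, and then observe that this right adjoint is a composition of fully-faithful functors.

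By \Cref{theorem:big diagram1}, we have $- \otimes_A T \cong S \circ \kappa^{-1} \circ \rho^{L}_{\Lambda} \circ F^{-1}$. Since $RHom(T, -)$ is right adjoint to $- \otimes_A T$, it can be computed by taking right adjoints of the four factors in reverse order. Each of $S$, $\kappa$, $F$ is an equivalence, so its right adjoint is its quasi-inverse. The left wrapping functor $\rho^{L}_{\Lambda}$ is, by construction, left adjoint to the fully-faithful inclusion $\rho: \Sh(\T^n, \Lambda_\Sigma) \hookrightarrow \Sh(\T^n, \Lambda_c)$, so $(\rho^{L}_{\Lambda})^{R} = \rho$. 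Assembling these,
\[
RHom(T, -) \;\cong\; F \circ \rho \circ \kappa \circ S^{-1}.
\]

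In this factorization, $S^{-1}$, $\kappa$, and $F$ are equivalences of categories, and $\rho$ is fully-faithful as an inclusion of a full subcategory (see \Cref{prop:comparing skeleta}). Since a composition of fully-faithful functors is fully-faithful, we conclude that $RHom(T, -)$ is fully-faithful.

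There is no substantial obstacle once \Cref{theorem:big diagram1} is established: the entire content of the corollary is the observation that the only non-equivalence among the four factors of $- \otimes_A T$ is the wrapping functor $\rho^{L}_{\Lambda}$, and passing to right adjoints replaces it by the genuine fully-faithful inclusion $\rho$. The one point that warrants care is ensuring that the tensor-hom adjunction $(-) \otimes_A T \dashv RHom(T, -)$ really is realized by the adjunction of the four-fold composition above; this is immediate from the uniqueness of adjoints, once one notes that the composed right adjoint $F \circ \rho \circ \kappa \circ S^{-1}$ sends a coherent sheaf $\mathcal{F}$ to $\bigoplus_{[a]} \Hom_{D(\Coh)}(\mathcal{O}(-a),\mathcal{F})$ by the standard computation of each factor on generators.
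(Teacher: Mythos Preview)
Your proof is correct and follows essentially the same approach as the paper: pass to right adjoints in \Cref{theorem:big diagram1} to obtain $RHom(T,-) \cong F \circ \rho \circ \kappa \circ S^{-1}$, then observe that $\rho$ is a fully-faithful inclusion and the remaining factors are equivalences. Your additional remarks justifying the adjunction identification are sound but not strictly necessary, since uniqueness of right adjoints already guarantees that the composite right adjoint agrees with $RHom(T,-)$.
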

\begin{proof}
In \Cref{theorem:big diagram1}, passing to the right adjoints, we have:
\[
RHom(T, -) \cong F \circ \rho  \circ \kappa \circ S^{-1} (-)
\]
Since the inclusion $\rho$ is fully faithful and the rest of functors are equivalances, the result follows.  
\end{proof}

\subsection{Algebraic descriptions of wrapping functors}

It may appear somewhat mysterious what the inclusion functor and its left and right adjoints mean geometrically. In the case when the skeleta are coming from the cube and/or CW stratifications, by \Cref{exodromy} and \Cref{derexod}, the categories $\Sh(\mathbb{T}^n, \Lambda_{c})$ and $\Sh(\mathbb{T}^n, \Lambda_{cw})$ are equivalent to the derived category of modules over an entrance path algebra for the cube and CW stratification respectively. When that is the case, we show that inclusion and the (left) wrapping functor is simply a pushforward (forgetful functor) and pullback (base change) pair associated to a certain homomorphism between the entrance path algebras. 

Since the cube stratification $S^c$ is a coarsening of the CW stratification $S^{cw}$, we have:

\begin{proposition}\label{cellcubeposetmap}
There is an order-preserving map of posets $f: I/\mathbb{Z}^n \rightarrow \mathbb{Z}^{n+k}/\mathbb{Z}^n$, inducing a ring homomorphism 
\begin{align*}
\phi: A^{\text{cw}}_{Ent} & \rightarrow A^{c}_{Ent} \\
\gamma & \mapsto \gamma
\end{align*}
\end{proposition}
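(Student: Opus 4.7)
The plan is straightforward: define $f$ as the forgetful map that drops the cell-direction data $J$, and define $\phi$ on paths as the identity, reinterpreting each CW entrance path as a cube entrance path. The guiding principle is that the CW stratification is a refinement of the cube one, so any CW entrance path is automatically a cube entrance path, and a homotopy through CW entrance paths is in particular a homotopy through cube entrance paths.

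Concretely, set $f([(a,J)]) := [a]$. This is well-defined on the quotient because the $\Z^n$-action on $I = \Z^{n+k}\times 2^{\{1,\dots,n+k\}}$ translates only the $\Z^{n+k}$-component via $i$, leaving $J$ untouched and sending the $\Z^{n+k}$-component to its class in $\Z^{n+k}/\Z^n$. For order preservation, suppose $[(a,J)] \geq [(b,J')]$ in the dualized poset on $I/\Z^n$. Unwinding \eqref{eq: cell boudnary} together with the dualization and the quotient, one obtains $m \in i(\Z^n)$ and $N \subseteq J$ with $a - b - m = \sum_{k \in N} e_k \in [0,\infty)^{n+k}$. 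Applying $\pi$ and using $\pi(m)=0$, we get $[a] - [b] \in \pi([0,\infty)^{n+k})$, which is exactly the cube poset relation $[a] \geq [b]$.

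For $\phi$, I would exhibit it as a $k$-linear functor of path categories (equivalently, a non-unital algebra map $A^{\mathrm{cw}}_{Ent} \to A^c_{Ent}$): on objects $[(a,J)] \mapsto [a]$; on a homotopy class of CW entrance paths $\gamma: [(a,J)]\to [(b,J')]$, send it to the underlying path regarded as a cube entrance path $[a]\to [b]$. Three items need checking: (i) the underlying path is indeed a cube entrance path, which is immediate from $\Phi_c \circ \gamma = f \circ \Phi_{cw} \circ \gamma$ and order-preservation of $f$; (ii) the assignment descends to homotopy classes, by the same composition argument applied to path homotopies; (iii) concatenation is preserved strictly at the path level, hence on homotopy classes as well.

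The main obstacle I anticipate is the base-point bookkeeping: CW base points need not agree with the cube base points, and several CW vertices map to a single cube vertex. I would handle this using the contractibility of each stratum (via \Cref{homeobt} and \Cref{entrance and exit for CW}, as intersections of convex sets on the universal cover) to homotope paths near their endpoints onto the chosen cube base points without altering the cube-homotopy class. I would also point out that, while $\phi$ is cleanly defined as a functor between path categories, the passage to the total endomorphism algebras collapses distinct orthogonal idempotents $e_{[(a,J)]}$ with a common image onto a single $e_{[a]}$; this collapse is the combinatorial content of the statement that drives the subsequent comparison of CW and cube resolutions.
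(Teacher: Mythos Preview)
Your proposal is correct and follows essentially the same approach as the paper: define $f([(a,J)])=[a]$, verify order preservation by unwinding the cell-boundary relation \eqref{eq: cell boudnary} together with the dualization and quotient, and then observe that reinterpreting CW entrance paths as cube entrance paths gives the ring map. The paper's proof is terser---it declares the ring homomorphism ``immediate from the definition'' without the base-point and homotopy-descent discussion you supply---but the underlying idea is the same.
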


\begin{proof}
The map
\begin{align*}
f: I/\Z^n & \to \Z^{n+k}/\Z^{n} \\
[(a,J)] & \mapsto [a]
\end{align*}
is order preserving since 

\begin{align*}
& [(a,J)] \leq [(b,J')] \implies -a - m = -b + \sum_{\ k \in N \subset \{1,..., n+k\}} e_k \text{ for some} \: m \in \it{i}(\mathbb{Z}^n)
\end{align*}

Hence, $b-a \in \pi([0,\infty])^{n+k}$, which, by definition, means that $[a]\leq[b]$. The fact that $f$ is a ring homomorphism is immediate from the definition.
\end{proof}

Under the equivalence of \Cref{exodromy} pushforward under $\phi$ corresponds to inclusion and (derived) pullback corresponds to the left wrapping.  Precisely, we have the following:
\begin{proposition}\label{prop: CW and cube funct}
The following diagram commutes. In particular, $i_{*}$ admits a left adjoint which preserves probe sheaves, and a right adjoint which preserves co-probe sheaves.

\[ 
\begin{tikzcd}
Sh_{S^{c}}(\mathbb{T}^n) \arrow{r}{i_{*}} \arrow{d}{\rotatebox{90}{$\sim$} \: F_c} & Sh_{S^{\text{cw}}}(\mathbb{T}^n) \arrow{d}{\rotatebox{90}{$\sim$} \: F_{cw}} \\
\RMod A^{c}_{Ent} \arrow{r}{\phi_{*} = for} & \RMod A^{\text{cw}}_{Ent} 
\end{tikzcd}
\]
\end{proposition}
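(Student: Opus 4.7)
The plan is to verify the diagram commutes on both objects and morphisms by using the exodromy description of $F_c$ and $F_{cw}$ as stalk-collection functors, and then deduce the existence and behavior of adjoints algebraically from the fact that $\phi_*$ is restriction of scalars along the ring homomorphism $\phi$ of \Cref{cellcubeposetmap}.

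\textbf{Step 1 (commutativity on objects).} Given $\mathcal{F} \in Sh_{S^c}(\T^n)$, the module $F_c(\mathcal{F}) = \bigoplus_{[a]} \mathcal{F}_{v_{[a]}}$ is the direct sum of stalks at chosen base points $v_{[a]} \in S^c_{[a]}$. Since each cube stratum is contractible (by \Cref{homeobt} and convexity) and $\mathcal{F}|_{S^c_{[a]}}$ is locally constant, $\mathcal{F}$ is constant on $S^c_{[a]}$ with value $\mathcal{F}_{v_{[a]}}$. Choose a base point $v_{[(a,J)]} \in S^{cw}_{[(a,J)]} \subseteq S^c_{f([(a,J)])} = S^c_{[a]}$. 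Then $(i_*\mathcal{F})_{v_{[(a,J)]}} = \mathcal{F}_{v_{[a]}}$, so on underlying vector spaces
\[
F_{cw}(i_*\mathcal{F}) = \bigoplus_{[(a,J)]} \mathcal{F}_{v_{f([(a,J)])}}.
\]
On the other hand, restriction of scalars along $\phi$ decomposes $\phi_*(F_c(\mathcal{F}))$ according to the idempotents $\phi(e_{[(a,J)]}) = e_{f([(a,J)])} = e_{[a]}$, giving $\bigoplus_{[(a,J)]} F_c(\mathcal{F}) \cdot e_{[a]} = \bigoplus_{[(a,J)]} \mathcal{F}_{v_{f([(a,J)])}}$, which agrees with the above.

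\textbf{Step 2 (commutativity on morphisms).} Under exodromy (\Cref{exodromy}, \Cref{prop: entrancepathalgebraisend}), the action of a homotopy class of entrance path $\gamma$ from $v_{[(a,J)]}$ to $v_{[(b,K)]}$ on $F_{cw}(i_*\mathcal{F})$ is the cospecialization map induced by $\gamma$ on stalks. The map $\phi$ sends $\gamma$ to the same path $\gamma$ viewed as an entrance path in $S^c$ from $v_{[a]}$ to $v_{[b]}$ (well-defined since $f$ is order-preserving), whose action on $F_c(\mathcal{F})$ is, again, the cospecialization by $\gamma$. Since $i_*\mathcal{F}$ and $\mathcal{F}$ are the same underlying sheaf, these two cospecialization maps coincide, proving $F_{cw} \circ i_* = \phi_* \circ F_c$ on morphisms as well.

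\textbf{Step 3 (adjoints).} Since $\phi_*$ is restriction of scalars along a ring homomorphism $\phi$, it admits both a left adjoint $L := (-)\otimes_{A^{cw}_{Ent}} A^c_{Ent}$ and a right adjoint $R := \Hom_{A^{cw}_{Ent}}(A^c_{Ent},-)$; these transport under the equivalences $F_c, F_{cw}$ to the desired adjoints of $i_*$.  For probe sheaves, compute
\[
L(P^{cw}_{[(a,J)]}) = e_{[(a,J)]} A^{cw}_{Ent} \otimes_{A^{cw}_{Ent}} A^c_{Ent} \cong \phi(e_{[(a,J)]}) A^c_{Ent} = e_{[a]} A^c_{Ent} = P^c_{[a]},
\]
which corresponds via $F_c^{-1}$ to the probe sheaf $\mathcal{P}_{[a]}$ by \Cref{prop: simpleimages}. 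For co-probes, tensor-hom adjunction gives
\[
R(I^{cw}_{[(a,J)]}) = \Hom_{A^{cw}_{Ent}}(A^c_{Ent}, D(A^{cw}_{Ent}\, e_{[(a,J)]})) \cong D(A^c_{Ent} \otimes_{A^{cw}_{Ent}} A^{cw}_{Ent}\, e_{[(a,J)]}) \cong D(A^c_{Ent}\, e_{[a]}) = I^c_{[a]},
\]
which under $F_c^{-1}$ is the coprobe $\mathcal{I}_{[a]}$.

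The main obstacle is Step 2: pinning down the compatibility of entrance paths under $\phi$ with the cospecialization maps on stalks.  This is ultimately straightforward because $\phi$ is defined precisely by the tautological inclusion of CW entrance paths into cube entrance paths (as the same paths in $\T^n$), but it does require carefully unwinding the exodromy equivalence of \Cref{exodromy} and the isomorphism of \Cref{prop: entrancepathalgebraisend} to confirm that the ring homomorphism $\phi$ induces the same generalization maps on both sides. Once this is in place, the remaining steps are formal.
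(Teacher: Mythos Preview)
Your proof is correct and follows essentially the same strategy as the paper --- identify $i_*$ with restriction of scalars $\phi_*$ along the ring map of \Cref{cellcubeposetmap}, then read off the adjoints algebraically --- but the execution differs in two places.  For commutativity, the paper argues via probes: it computes $\Hom(A^{cw}_{Ent}, \phi_*F_c(\mathcal F))$ by adjunction and the identity $\phi^*P_{[(a,J)]}\cong P_{[a]}$, matches it with $\Hom(A^{cw}_{Ent}, F_{cw}i_*(\mathcal F))$ using that both sides are stalks, and concludes by Yoneda; your Steps~1--2 instead verify the stalk decomposition and the entrance-path action directly, which is more elementary and in fact makes explicit the module-structure compatibility that the paper's Yoneda invocation leaves implicit.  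For the right adjoint, the paper does not compute $R(I^{cw}_{[(a,J)]})$ at all: it simply observes that both algebras have finite global dimension, hence admit Serre functors, and since Serre functors intertwine equivalences and send indecomposable projectives to indecomposable injectives, the statement for co-probes follows formally from the one for probes.  Your explicit tensor--hom duality computation $\Hom_{A^{cw}}(A^c, D(A^{cw}e_{[(a,J)]}))\cong D(A^c e_{[a]})$ is a fine alternative and avoids appealing to the existence of Serre functors.
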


\begin{proof}
We first note:
\begin{align}\label{eq: projectives}
& \phi^{*}(P_{([a],J)}) \cong  P_{[(a,J)]}\otimes_{A^{\text{cw}}_{Ent}} A^{\text{c}}_{Ent} \notag \\
& \cong \phi(e_{[a,J]})A^{c}_{Ent} \notag  \\
& \cong e_{[a]}A^{c}_{Ent} \cong P_{[a]}
\end{align}

Then, for any $\cF \in Sh_{S^c}(\T^n)$, we have:
\small
\begin{align*}
\Hom(A^{cw}_{Ent}, \phi_{*} \circ F_{c}(\F))  
& \cong \bigoplus_{([a,J]) \in \im(\Phi^{cw})} \Hom(\phi^{*}P_{([a,J])}, F_{c}(\F)) & \text{by adjunction} \\
& \cong \bigoplus_{([a,J]) \in \im(\Phi^{cw})} \Hom(P_{[a]}, F_{c}(\F)) & \text{by \eqref{eq: projectives}} \\
& \cong \bigoplus_{([a,J]) \in \im(\Phi^{cw})} \Hom(\cP_{[a]}, \F) & \text{applying $F_c^{-1}$ and \Cref{prop: simpleimages}} \\
& \cong \bigoplus_{([a,J]) \in \im(\Phi^{cw})} \F_{a} & \text{by definition of a stalk probe} \\
& \cong \bigoplus_{([a,J]) \in \im(\Phi^{cw})} \Hom(\cP_{([a,J])}, i_{*}(\F)) & \text{again by definition of a stalk probe}  \\ 
& \cong \Hom(A^{cw}_{Ent}, F_{cw} \circ i_{*} (\F))  & \text{applying the equivalence $F_{cw}$ and \Cref{prop: simpleimages}}   
\end{align*}
\normalsize
Therefore the diagram commutes by the Yoneda lemma.

Hence, the left adjoint to $inc$ is given by 
\[
inc^{L} \cong F^{-1}_{c} \circ \phi^{*} \circ F_{cw}
\]
and satisfies: 
\begin{align*}
& inc^{L}(\cP_{([a,J])}) \cong F^{-1}_{c} \circ \phi^{*} \circ F_{cw}(\cP_{([a,J])}) \\
& \cong F^{-1}_{c} \circ \phi^{*} (\cP_{([a,J])}) \cong F^{-1}_{c}(P_{[a]}) \cong \cP_{[a]} & \text{by \eqref{eq: projectives} and \Cref{prop: simpleimages}}
\end{align*}

Since $A^{c}_{Ent}$ and $A^{cw}_{Ent}$ are finite dimensional algebras of finite homological dimension, they admit Serre functors. Hence, the formal right adjoint to $i_{*}$ exists, and since Serre functors take indecomposable projectives to injectives and commute with exact equivalences, the adjoint takes co-probe sheaves to co-probe sheaves. 

\end{proof}

\section{Line Bundle Resolutions from Mirror Symmetry}\label{Section 4}

In this section, we begin to utilize the observations made in \S\ref{sub: categorical relationship} to describe sheaf resolutions on smooth projective toric varieties by line bundles. We first make a general statement about resolutions of any coherent sheaf and then move to more specific examples coming from pushforwards by toric morphisms. Following \cite{BHS}, we begin by defining the notion of a minimal resolution.

\begin{definition}
A line bundle resolution of a coherent sheaf $F$ on a smooth projective toric variety $X$, denoted  $\mathcal{O}^{\bullet}_{X_{\Sigma}}$, is called minimal if no differential contains an identity morphism. A minimal resolution is called irreducible if it is not isomorphic to a chain complex of the form $\mathcal{O}^{\bullet'}_{X_{\Sigma}} \oplus \mathcal{O}^{\bullet''}_{X_{\Sigma}}$, where $\mathcal{O}^{\bullet''}_{X_{\Sigma}}$ is acyclic.
\end{definition}

\begin{example}
Consider a minimal Koszul complex in $x_{0}$ and $x_2$ on the Hirzerbruch surface $\mathbb F_2$. Since $V(x_0,x_2)$ is in the unstable locus, the complex is acyclic.  Hence, it is not irreducible. 
\end{example}

As remarked in \cite{BHS}, unlike free resolutions in commutative algebra, minimal resolutions of coherent sheaves by line bundles need not be unique and can have different lengths. However, if the inclusion of the Lagrangian skeleta in \Cref{prop:comparing skeleta} is an equality: 
\[
\Lambda_{\Sigma} = \Lambda_{c}
\]
then the minimal resolution of $F$ constructed in \Cref{theorem: any resolution exists} is unique (and hence irreducible) among resolutions by line bundles in $\Phi_c$. This is the statement of \Cref{prop: unique resolution}.

\begin{remark}
For the projective space $\P^{n}$ and products of projective spaces, $\Lambda_{\Sigma} = \Lambda_{c}$.   
\end{remark}

\subsection{General Resolutions}\label{sub: general resolutions}

\begin{theorem}\label{theorem: any resolution exists}
Let $F$ be a coherent sheaf on a smooth, $n$-dimensional projective toric variety $X_\Sigma$ and $\mathcal{L}$ be an ample line bundle. Then, $F$ admits a minimal resolution of length at most $n$ by line bundles in the collection ${\mathcal O_{X_\Sigma}(-a) \otimes \mathcal{L} ^{-N}, [a] \in \im{\Phi_c}}$, for some fixed (sufficiently high) $N$.
\end{theorem}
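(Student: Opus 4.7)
The plan is to reduce the problem to constructing a minimal projective resolution of a module over the finite-dimensional entrance path algebra $A^c_{Ent}$, and then transport this resolution through the commutative diagram of \Cref{theorem:big diagram1}. The approach very closely mirrors the classical strategy of Berkesch--Erman--Smith with the tilting object $T = \bigoplus_{[a] \in \im \Phi_c} \mathcal O_{X_\Sigma}(-a)$.

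First, I would exploit Serre vanishing to reduce to a question about modules (not complexes). Since $\im \Phi_c$ is finite by \Cref{homeobt}, I can pick $N$ uniformly so that
\[
\Ext^i(\mathcal O(-a), F\otimes \mathcal L^N) \;=\; H^i(X_\Sigma, \mathcal O(a)\otimes F\otimes \mathcal L^N) \;=\; 0
\]
for every $i > 0$ and every $[a]\in \im \Phi_c$. Setting $A = A^c_{Ent}$, this means $RHom(T, F\otimes \mathcal L^N)$ is concentrated in degree zero and so equals a genuine $A$-module $M$.

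Next, using that $A$ is a finite-dimensional $\C$-algebra with $\mathrm{pdim}(A) \le n$ by \Cref{prop: proj dim less than n}, I would take a minimal projective resolution
\[
0 \to P_n \to P_{n-1} \to \cdots \to P_0 \to M \to 0,
\]
where each $P_i$ is a direct sum of indecomposable projectives $P_{[a]}$. Minimal resolutions exist and are unique up to isomorphism over such Krull--Schmidt algebras. Applying the functor $-\otimes_A T$, which is exact on projectives and, by \Cref{theorem:big diagram1}, satisfies $P_{[a]} \otimes_A T \cong \mathcal O(-a)$, yields a complex of line bundles in the collection $\{\mathcal O(-a)\}_{[a]\in \im \Phi_c}$. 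Because $RHom(T,-)$ is fully faithful (\Cref{cor: rhom is fully faithful}) with left adjoint $-\otimes^L_A T$, this complex is quasi-isomorphic to $F\otimes \mathcal L^N$, producing the resolution of the correct length. Tensoring with $\mathcal L^{-N}$ then gives the required resolution of $F$ by line bundles of the form $\mathcal O(-a)\otimes \mathcal L^{-N}$.

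The step I expect to be the main obstacle is verifying \emph{minimality} of the resulting sheaf complex. I would argue that an identity summand appearing in some differential of the line bundle complex would, under the equivalence $-\otimes_A T$ and the ring isomorphism of \Cref{prop: ring iso}, correspond to an identity summand in a differential of the chosen projective resolution of $M$, contradicting its minimality. The care needed here is to track direct summands through the functor carefully; because $-\otimes_A T$ sends indecomposable projectives $P_{[a]}$ bijectively to indecomposable line bundles $\mathcal O(-a)$ and the induced map on $\Hom$-spaces is the isomorphism of \Cref{prop: ring iso}, this reduction to algebraic minimality goes through.
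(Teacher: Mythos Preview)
Your proposal is correct and follows essentially the same route as the paper: Serre vanishing makes $RHom(T,F\otimes\mathcal L^N)$ a pure $A^c_{Ent}$-module, its minimal projective resolution of length $\le n$ (\Cref{prop: proj dim less than n}) is pushed through $-\otimes_A T$ using \Cref{cor: rhom is fully faithful}, and minimality is preserved via the ring isomorphism of \Cref{prop: ring iso}. You are in fact slightly more explicit than the paper about why $N$ can be chosen uniformly and about the minimality transfer, but the argument is the same.
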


\begin{proof}
By Serre's vanishing theorem for projective varieties, for any ample line bundle $\mathcal{L}$ and a sufficiently high $N$, 
\[
H^{p}(T, F \otimes \mathcal{L} ^{N}) = 0 
\]
for $p \geq 0$, where $T = \bigoplus_{[a] \in \im(\Phi_c)} \mathcal{O}_{X_\Sigma}(-a)$. Hence,
\[
RHom(T,F \otimes \mathcal{L} ^{N}) \cong M 
\] for some $M \in \RMod A_{Ent}$. By \Cref{prop: proj dim less than n} , $pdim(A_{Ent})\leq n$. Hence, the minimal projective resolution of $M$, denoted $P^{\bullet}$, has length at most $n$. Therefore,
\begin{align*}
 F \otimes \mathcal{L} ^{N} & \cong RHom(T,F \otimes \mathcal{L} ^{N})\otimes_{A}{T} & \text{since $RHom(T,-)$ is fully faithful by \Cref{cor: rhom is fully faithful}} \\
& \cong P^{\bullet} \otimes_{A}{T} \\
& \cong \mathcal{O}^{\bullet}_{X_\Sigma}(-a) & \text{since $-\otimes_{A}{T}$ takes indecomposable projectives to line bundles}
\end{align*}
Twisting the resolution by $\mathcal{L}^{-N}$, we get:
\[
F \cong \mathcal{O}^{\bullet}_{X_\Sigma}(-a) \otimes \mathcal{L}^{-N}
\]
Since minimal projective resolutions in the category of graded finite-dimensional quiver algebras are characterized by having no identity components in the differentials, the resolution $F$ is minimal.
\end{proof}

\begin{proposition}\label{prop: unique resolution}
Suppose a smooth, projective toric variety $X_{\Sigma}$ satisfies: 
\[
\Lambda_{\Sigma} = \Lambda_{c}
\]
Then, the minimal resolution of $F$ by line bundles in the collection ${\mathcal O_{X_\Sigma}(-a) \otimes \mathcal{L} ^{-N}, [a] \in \im{\Phi_c}}$ from \Cref{theorem: any resolution exists} is unique.  
\end{proposition}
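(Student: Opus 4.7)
The plan is to leverage the categorical equivalence from \Cref{theorem: bigdiagram} under the hypothesis $\Lambda_\Sigma = \Lambda_c$. Under this hypothesis, the inclusion of full subcategories in \Cref{prop:comparing skeleta} becomes an equality $\Sh(\T^n, \Lambda_\Sigma) = \Sh(\T^n, \Lambda_c)$, so the right wrapping functor $\rho_\Lambda^R$ appearing in that diagram is the identity. Consequently, the commutative square of \Cref{theorem: bigdiagram} upgrades to an outright equivalence $\Psi := (-\otimes_A T)^{-1} : D(\Coh(X_\Sigma)) \xrightarrow{\sim} D(\RMod A^c_{Ent})$ which sends each Bondal line bundle $\mathcal{O}(-a)$ to the indecomposable projective module $P_{[a]}$.

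First, I would transport the resolution question across $\Psi$. A line bundle resolution of $F \otimes \mathcal{L}^N$ by sums of $\mathcal{O}(-a)$ with $[a] \in \im \Phi_c$ corresponds, under $\Psi$, to a projective resolution of $\Psi(F \otimes \mathcal{L}^N)$ by sums of the indecomposable projectives $P_{[a]}$. Moreover the minimality convention translates cleanly: a differential on the coherent side admits an identity summand on some $\mathcal{O}(-a)$ if and only if the mirror differential admits an identity summand on $P_{[a]}$. Hence a minimal line bundle resolution in the Bondal collection corresponds precisely to a minimal projective resolution of $\Psi(F\otimes \mathcal{L}^N)$ in the classical sense over the finite-dimensional algebra $A^c_{Ent}$.

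Next, I would invoke the standard uniqueness theorem for minimal projective resolutions. The algebra $A^c_{Ent}$ is a finite-dimensional basic algebra---explicitly, the path algebra of a finite quiver without loops modulo relations---so Krull--Schmidt holds for bounded complexes of finitely generated projectives, and any two minimal projective resolutions of a fixed module are isomorphic as chain complexes. Combined with the equivalence $\Psi$ and the harmless twist by $\mathcal{L}^{-N}$, this yields the desired uniqueness of the minimal line bundle resolution produced in \Cref{theorem: any resolution exists}.

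The main subtlety, and the step I would be most careful about, is the precise dictionary between the paper's combinatorial notion of minimality (``no differential contains an identity morphism'') and the radical-image condition used in the classical uniqueness theorem for minimal projective resolutions. Once one verifies that these notions coincide for complexes of direct sums of indecomposable projectives over $A^c_{Ent}$---using the no-loops property and the Krull--Schmidt decomposition of $\Hom$ spaces between indecomposable projectives---the remainder of the argument reduces to pure transport of structure through \Cref{theorem: bigdiagram}.
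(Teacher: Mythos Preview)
Your proposal is correct and follows essentially the same route as the paper: use $\Lambda_\Sigma = \Lambda_c$ to make the wrapping functor in \Cref{theorem: bigdiagram} an equivalence, conclude that $-\otimes_A T$ is an equivalence, and then invoke uniqueness of minimal projective resolutions over the finite-dimensional algebra $A^c_{Ent}$. The paper's proof is terser and simply asserts that minimal projective resolutions over finite-dimensional algebras are unique; your explicit attention to matching the paper's ``no identity in the differential'' notion of minimality with the classical radical-image condition is a point the paper leaves implicit.
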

\begin{proof}
If $\Lambda_{\Sigma} = \Lambda_{c}$, then $\Sh(\T^n,\Lambda_{\Sigma}) = \Sh(\T^n,\Lambda_{c})$. Hence, by the commutativity of \Cref{theorem: bigdiagram} and since $\kappa$, $F$, and $S$ are equivalences, $- \otimes_{A}T$ is an equivalence, and so is its right adjoint $RHom(T,-)$. Since any minimal projective resolution in the category of finite dimensional algebras is unique and irreducible, the claim follows. 
\end{proof}

\subsection{Betti Numbers for General Resolutions}

Given a simple module $S_{[a]} \in \RMod A^{c}_{Ent}$, we may consider its minimal injective  resolution, which we denote by $\mathcal{I}^{\bullet}_{[a]}$. Then, applying the functor $S^{-1}(-) \otimes_{A}T$ to $\mathcal{I}^{\bullet}_{[a]}$ yields a complex of line bundles, which we denote by $\mathcal{O}^{\bullet}_{[a]}$.

\begin{proposition}\label{prop: general betti numbers}
The Betti numbers for the resolution of $F$ from \Cref{theorem: any resolution exists} satisfy:
\begin{align*}
& \beta^{c}_{i,[a]} = dim(H^{i}(Hom^\bullet(\mathcal{O}^{\bullet}_{[a]}, F \otimes \mathcal{L}^{N})))
\end{align*}
where $\beta^{c}_{i,[a]}$ is the multiplicity of the line bundle $\mathcal{O}(-a)$ in the $i^{th}$ term of the resolution.
\end{proposition}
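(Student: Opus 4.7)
The strategy is to translate the Betti number computation to the module side of the mirror correspondence, where it becomes a standard Ext calculation over the path algebra $A = A^c_{\Ent}$, and then transport it back using the adjunction $(-)\otimes_A T \dashv R\Hom(T,-)$ together with Serre duality in $D^b(\RMod A)$. From the proof of \Cref{theorem: any resolution exists}, the minimal resolution of $F \otimes \mathcal L^N$ is obtained by applying $(-)\otimes_A T$ to a minimal projective resolution $P^\bullet \to M$ with $M = R\Hom(T, F \otimes \mathcal L^N)$; since $P_{[b]}\otimes_A T = \mathcal O(-b)$ and distinct line bundles are non-isomorphic, $\beta^c_{i,[a]}$ equals the multiplicity of $P_{[a]}$ as a summand of the $i$-th term of $P^\bullet$. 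This multiplicity is then computed by a standard Ext argument: the differentials of $P^\bullet$ land in the radical of $A$, so applying $\Hom_A(-,S_{[a]})$ kills them; combined with $\Hom_A(P_{[b]}, S_{[a]}) = \delta_{[a][b]}\C$ this gives $\beta^c_{i,[a]} = \dim \Ext^i_A(M, S_{[a]})$.

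The remaining task is to rewrite this Ext group as $\dim H^i \Hom^\bullet(\mathcal O^\bullet_{[a]}, F \otimes \mathcal L^N)$. I will apply the adjunction $(-)\otimes_A T \dashv R\Hom(T,-)$ to the definition $\mathcal O^\bullet_{[a]} = S^{-1}(\mathcal I^\bullet_{[a]}) \otimes_A T$, together with \Cref{cor: rhom is fully faithful} (which makes the unit of the adjunction an isomorphism on objects in the image of $R\Hom(T,-)$), to produce
\[
R\Hom_{\Coh}(\mathcal O^\bullet_{[a]}, F \otimes \mathcal L^N) \cong R\Hom_A(S^{-1}(\mathcal I^\bullet_{[a]}), M).
\]
Since $\mathcal I^\bullet_{[a]} \simeq S_{[a]}$ in $D^b(\RMod A)$ and $S^{-1}$ sends indecomposable injectives to indecomposable projectives, Serre duality ($R\Hom_A(S^{-1}X, Y) \cong \D R\Hom_A(Y, X)$) identifies the right hand side with $\D R\Hom_A(M, S_{[a]})$. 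Taking dimensions and using that $\mathcal O^\bullet_{[a]}$ is a bounded complex of line bundles (so $R\Hom_{\Coh}(\mathcal O^\bullet_{[a]}, -)$ is computed by the naive complex $\Hom^\bullet(\mathcal O^\bullet_{[a]}, -)$) gives the claimed formula, once the cohomological indexing of $\mathcal O^\bullet_{[a]}$ inherited from $\mathcal I^\bullet_{[a]}$ is matched with the Betti numbering.

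The main delicacy is bookkeeping the cohomological degree conventions through Serre duality and the implicit re-indexing of $\mathcal O^\bullet_{[a]}$; everything else is a formal consequence of \Cref{cor: rhom is fully faithful} together with the standard identification of Betti numbers with $\Ext$'s against simples for minimal projective resolutions over a finite-dimensional quiver algebra.
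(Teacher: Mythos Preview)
Your proposal is correct and follows essentially the same route as the paper: identify $\beta^c_{i,[a]}$ with $\dim \Ext^i_A(M,S_{[a]})$ via the minimal projective resolution, then pass back to the coherent side using the Serre functor on $D^b(\RMod A)$ together with the adjunction $(-)\otimes_A T \dashv R\Hom(T,-)$. The only cosmetic difference is the order of operations---the paper applies Serre duality first (rewriting $\Hom(M,S_{[a]}[i])$ as $\Hom(S^{-1}S_{[a]},M[i])$) and then invokes the adjunction, whereas you run the adjunction first and Serre duality second; both arrive at the same identification, and your cautionary remark about the degree bookkeeping is exactly the point the paper leaves implicit.
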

\begin{proof}
Recall that 
\[
RHom(T,F \otimes \mathcal{L} ^{N}) \cong M 
\] for some $M \in \RMod A_{Ent}$. The resolution of $F$ in \Cref{theorem: any resolution exists} was obtained by applying $ - \otimes_{A}T$ to the minimal projective resolution of $M$. Hence, its Betti numbers satisfy:
\begin{align*}
\beta^{c}_{i,[a]} & = \dim (\Hom_{D(\RMod A_{Ent})}(M, S_{[a]}[i])) \\
& = \dim (\Hom_{D(\RMod A_{Ent})}(S_{[a]},S(M)[i])) & \text{S is a Serre functor} \\
& = \dim (\Hom_{D(\RMod A_{Ent})}(S^{-1}S_{[a]},M[i])) & \text{S is an equivalence} \\
& = \dim (\Hom_{D(\RMod A_{Ent})}((S^{-1}S_{[a]})\otimes_{A}T,F\otimes \mathcal{L} ^{N}[i])) & \text{$RHom(T,-)$ is fully faithful} \\
& = \dim (\Hom_{D(\RMod A_{Ent})}(\mathcal{O}^{\bullet}_{[a]},F\otimes \mathcal{L} ^{N}[i]))\\
& = \dim(H^{i}(Hom^\bullet(\mathcal{O}^{\bullet}_{[a]}, F \otimes \mathcal{L}^{N}))) & \text{by \cite[Lemma 5.9, p.89]{GKZ}}
\end{align*}
\end{proof}

\begin{corollary}\label{positivity corrolary}
Suppose a coherent sheaf $F$ satisfies:  

\begin{align*}
H^{i}(X_{\Sigma}, F(a)) = 0 & \text{ for $i > 0$ and all $a \in \im{\Phi_c}$}
\end{align*}
Then, $F$ admits a minimal resolution by line bundles in $\im(\Phi_c)$ of length bounded by the dimension of $X_{\Sigma}$ with the following Betti numbers:
\begin{align*}
& \beta^{c}_{i,[a]} = dim(H^{i}(Hom^\bullet(\mathcal{O}^{\bullet}_{[a]}, F)))
\end{align*}
\end{corollary}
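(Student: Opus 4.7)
The plan is to observe that the hypothesis of the corollary is precisely what is needed to bypass the Serre-vanishing / ample-twist step in the proofs of \Cref{theorem: any resolution exists} and \Cref{prop: general betti numbers}. In those arguments, the role of $\mathcal{L}^N$ was to force $R\Gamma(X_\Sigma, F(a))$ to be concentrated in degree zero for every $[a] \in \im(\Phi_c)$; here that concentration is assumed at the outset, so the entire argument goes through with $N=0$.

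First, I would record the computation
\[
RHom(T, F) \;=\; \bigoplus_{[a] \in \im(\Phi_c)} RHom(\mathcal{O}_{X_\Sigma}(-a), F) \;=\; \bigoplus_{[a] \in \im(\Phi_c)} R\Gamma\bigl(X_\Sigma, F(a)\bigr),
\]
and invoke the hypothesis to conclude that $RHom(T,F) \cong M$ for some honest module $M \in \RMod A_{Ent}$ sitting in degree zero. From here the proof of \Cref{theorem: any resolution exists} applies verbatim: by \Cref{prop: proj dim less than n}, $M$ admits a minimal projective resolution $P^\bu \to M$ of length at most $n = \dim X_\Sigma$; applying $-\otimes_A T$ and using the fully-faithfulness of $RHom(T,-)$ from \Cref{cor: rhom is fully faithful} yields
\[
F \;\cong\; RHom(T,F)\otimes_A T \;\cong\; P^\bu \otimes_A T,
\]
which is a minimal complex of line bundles in $\{\mathcal{O}_{X_\Sigma}(-a) : [a] \in \im(\Phi_c)\}$ of length at most $n$.

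For the Betti numbers, I would rerun the computation of \Cref{prop: general betti numbers} using the Serre functor, fully-faithfulness of $RHom(T,-)$, and the definition $\mathcal{O}^\bu_{[a]} := (S^{-1} S_{[a]}) \otimes_A T$, this time with $F$ in place of $F \otimes \mathcal{L}^N$:
\begin{align*}
\beta^c_{i,[a]} &= \dim \Hom_{D(\RMod A_{Ent})}(M, S_{[a]}[i]) \\
&= \dim \Hom_{D(\RMod A_{Ent})}(S^{-1}S_{[a]}, M[i]) \\
&= \dim \Hom_{D(Coh X_\Sigma)}(\mathcal{O}^\bu_{[a]}, F[i]) \\
&= \dim H^i\bigl(\Hom^\bu(\mathcal{O}^\bu_{[a]}, F)\bigr),
\end{align*}
as required.

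There is no genuine obstacle here beyond checking that nothing in the proofs of \Cref{theorem: any resolution exists} and \Cref{prop: general betti numbers} used the twist $\mathcal{L}^N$ for any purpose other than forcing $RHom(T,-)$ to land in modules concentrated in degree zero; the only mild subtlety is being explicit that $\im(\Phi_c)$ is a set of representatives for the relevant line bundle classes, so that the hypothesis $H^i(X_\Sigma, F(a)) = 0$ ranging over $a \in \im(\Phi_c)$ truly suffices to make the single object $RHom(T,F)$ into a module.
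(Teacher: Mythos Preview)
Your proposal is correct and matches the paper's approach exactly: the paper's proof is the single line ``This follows immediately from \Cref{theorem: any resolution exists} and \Cref{prop: general betti numbers},'' and you have correctly unpacked why, namely that the vanishing hypothesis lets one take $N=0$ so that $RHom(T,F)$ is already a pure module. There is nothing to add.
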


\begin{proof}
This follows immediately from \Cref{theorem: any resolution exists} and \Cref{prop: general betti numbers}.
\end{proof}

\subsection{Resolving Pushforward Sheaves}

We now turn to resolving specific sheaves on $X_\Sigma$ that are obtained from pushforwards of structure sheaves along nice toric morphisms. We will see that the resolutions of these coherent sheaves are intimately related to the topology of the torus together with its cube and CW stratifications. The functoriality of homological mirror symmetry will play a crucial role. 

Consider a pair of smooth projective toric varieties $X_{\Sigma_1}$ and $X_{\Sigma_2}$ with complete fans $\Sigma_1$ and $\Sigma_2$. We will only consider finite toric morphisms.  Recall that a finite toric morphism comes from an injective map $f:N_1 \to N_2$ such that $\Sigma_1 = f^{-1}_{\mathbb R}(\Sigma_2)$.
We denote the dual map by $f^\vee: M_2 \to M_1$.

To align notation with \cite{Tr}, we use $v$ to denote the dual map of real vector spaces $v:= f^\vee_{\RR}: M_{2,\R} \to M_{1,\R}$, which also descends to the map of tori: 
\[
\overline{v}: M_{2,\R}/M_{2} \to M_{1,\R}/M_{1}.
\]
We denote the zero fiber by $\newterm{V} := \overline{v}^{-1}(0)$. 
If $f$ is in addition injective, the CCC enjoys nice functorial properties \cite[Theorem 2.5]{Tr}. This functoriality is the main ingredient in computing the mirror of the pushforward of the structure sheaf of $X_{\Sigma_1}$ by $u$. Once we obtain its mirror, using \Cref{theorem: bigdiagram}, we will be able to associate to it a pure module over the entrance path algebra, whose minimal projective resolution would yield a resolution of $u_{*}\mathcal{O}_{X_{\Sigma_1}}$ by line bundles in $\im(\Phi^c)$.

\begin{remark}
 In what follows, the only thing we use is that the map $u$ comes from an injective fan-preserving map of lattices.  This actually is equivalent to asking that $u$ is a generically finite toric morphism. However, such a morphism always factors as a birational toric morphism followed by a finite toric morphism.  Hence, the pushforward $u_* \mathcal O_{\Sigma_1}$ will always be isomorphic to the pushforward along a toric finite morphism.  Hence, we phrase things in this simpler language.      
\end{remark}

First, we establish the functoriality of $\kappa$, heavily bootstrapping of the functoriality of $\kappa'$ (see \Cref{prop: zhou tr comp}). 

\begin{proposition}\label{zhou's functoriality}
In the situation above, one has:

\begin{equation*}
   \begin{tikzcd}
DCoh(X_{\Sigma_2}) \ar[r, "\kappa_2"] \ar[d, "u^{*}"]& Sh(\T^n, \Lambda_{\Sigma_2}) \ar[d, "\overline{v}_{!}"] \\ DCoh(X_{\Sigma_1}) \ar[r, "\kappa_1"]  & Sh(\T^n, \Lambda_{\Sigma_1})
\end{tikzcd}
\label{eq: commutativity}
\end{equation*}
where $\kappa$ is the mirror functor in \cite{Zh}.
\end{proposition}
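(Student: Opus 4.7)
The plan is to deduce this diagram for Zhou's functor $\kappa$ from the analogous functoriality for Treumann's functor $\kappa'$ \cite[Theorem 2.5]{Tr}, bootstrapping via the comparison $\kappa \cong \mathcal{D}\circ \kappa'\circ(-)^\vee$ established in \Cref{prop: zhou tr comp}. Treumann's theorem provides a commutative square
\[
\begin{tikzcd}
DCoh(X_{\Sigma_2}) \ar[r, "\kappa'_2"] \ar[d, "u^*"] & \Sh(\T^n, \Lambda'_{\Sigma_2}) \ar[d, "\overline{v}_*"] \\
DCoh(X_{\Sigma_1}) \ar[r, "\kappa'_1"] & \Sh(\T^n, \Lambda'_{\Sigma_1})
\end{tikzcd}
\]
(modulo the sign convention $\Lambda'_{\Sigma_i}$ vs.\ $\Lambda_{\Sigma_i}$ of \Cref{rem:diff signs}), and the goal is to swap $\kappa'$ for $\kappa$ and $\overline{v}_*$ for $\overline{v}_!$.

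Concretely, I would perform the following diagram chase, plugging in the comparison formula:
\begin{align*}
\kappa_1 \circ u^*
&\cong \mathcal{D}\circ \kappa'_1 \circ (-)^\vee \circ u^* \\
&\cong \mathcal{D}\circ \kappa'_1 \circ u^* \circ (-)^\vee \\
&\cong \mathcal{D}\circ \overline{v}_* \circ \kappa'_2 \circ (-)^\vee \\
&\cong \overline{v}_! \circ \mathcal{D}\circ \kappa'_2 \circ (-)^\vee \\
&\cong \overline{v}_! \circ \kappa_2.
\end{align*}
The second line (commuting $u^*$ past $(-)^\vee$) holds because derived pullback along a morphism of smooth varieties is symmetric monoidal and sends $\mathcal{O}_{X_{\Sigma_2}}$ to $\mathcal{O}_{X_{\Sigma_1}}$; since line bundles generate $DCoh(X_{\Sigma_2})$, it suffices to check this case, where it reduces to the trivial $(u^*L)^{-1} \cong u^*(L^{-1})$. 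The third line is Treumann's functoriality. The fourth line is Verdier duality $\mathcal{D}\circ f_* \cong f_! \circ \mathcal{D}$. Observe also that $\T^n$ is compact, so $\overline{v}$ is proper and $\overline{v}_*\cong \overline{v}_!$ in any case; this gives a second sanity check on the form of the answer.

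The main obstacle I expect is not the categorical manipulation, which is purely formal, but rather the careful reconciliation of sign conventions between \cite{Tr} and \cite{Zh} flagged in \Cref{rem:diff signs}. In particular, one must verify that the torus map $\overline{v}$ appearing in Treumann's functoriality is literally the same $\overline{v}$ appearing in the target diagram (rather than differing by negation in $M_\R$), and that the Verdier duality step interacts correctly with the antipodal involution relating $\Lambda_{\Sigma_i}$ to $\Lambda'_{\Sigma_i}$ (as is already implicit in the proof of \Cref{prop: zhou tr comp}). Once this bookkeeping is settled, the remainder of the argument is a routine chase through the comparison formula.
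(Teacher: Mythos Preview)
Your proposal is correct and follows essentially the same approach as the paper: both arguments decompose the desired square via the comparison $\kappa \cong \mathcal{D}\circ\kappa'\circ(-)^\vee$ into three pieces (commuting $u^*$ past $(-)^\vee$, Treumann's functoriality, and the Verdier duality relation $\overline{v}_!\circ\mathcal{D}\cong\mathcal{D}\circ\overline{v}_*$ together with properness of $\overline{v}$). The only cosmetic difference is that the paper packages these steps as a single three-square diagram rather than a chain of isomorphisms, and it records Treumann's functoriality with $\overline{v}_!$ rather than $\overline{v}_*$---but as you note, these agree since $\overline{v}$ is proper.
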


\begin{proof}
Since $\kappa \cong \mathcal{D} \circ \kappa^{\prime} \circ(-)^{\vee}$ by \Cref{prop: zhou tr comp}, the claim will follow by showing the commutativity of the following diagram:
\begin{equation*}
   \begin{tikzcd}
DCoh(X_{\Sigma_2}) \ar[r, "(-)_2^{\vee}"] \ar[d, "u^{*}"]& DCoh(X_{\Sigma_2})^{op} \ar[r, "\kappa'^{op}_2"] \ar[d, "u^{*}"] & \Sh^{op}(\T^n, \Lambda_{-\Sigma_2}) \ar[r, "\mathcal{D}_2"] \ar[d, "\overline{v}_{!}"]& Sh(\T^n, \Lambda_{\Sigma_2}) \ar[d, "\overline{v}_{!}"] \\ DCoh(X_{\Sigma_1}) \ar[r, "(-)_1^{\vee}"] & DCoh(X_{\Sigma_1})^{op} \ar[r, "\kappa'^{op}_1"]  & \Sh^{op}(\T^n, \Lambda_{-\Sigma_1}) \ar[r, "\mathcal{D}_1"]  & Sh(\T^n, \Lambda_{\Sigma_1})
\end{tikzcd}
\label{eq: commutativity}
\end{equation*}
where $\kappa'_{i}$ is the mirror functor in \cite{Tr}. The commutativity of the middle square is the functoriality of the Treumann's mirror functor \cite[Proposition 2.5]{Tr}. The commutativity of the left-most square is a standard fact about locally-free sheaves. The commutativity of the right-most square can be argued as follows: 

\begin{align*}
&\overline{v}_{!} \circ \mathcal{D}_2 \cong \mathcal{D}_1 \circ \overline{v}_{*} & \text{by the properties of Verdier duality} \\ 
& \cong \mathcal{D}_1 \circ \overline{v}_{!}  & \text{since $\overline{v}$ is proper}
\end{align*}

\end{proof}

\begin{proposition}\label{prop: imagekappa}
One has:
\[
\kappa_2(u_{*}(\mathcal{O}_{X_{\Sigma_1}})) = \rho^{R}_{\Lambda_{\Sigma}}(\overline{v}^{!}j_{\{0\}*}\C_{\{0\}}).
\]
\end{proposition}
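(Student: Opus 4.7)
The plan is to test the identity against an arbitrary $E \in \Sh(\T^{n_2}, \Lambda_{\Sigma_2})$, chain together the adjunctions coming from the six-functor formalism with the categorical equivalences of homological mirror symmetry, and conclude by Yoneda. The key structural input is the functoriality of \Cref{zhou's functoriality}, which turns $u^*$ on the coherent side into $\overline{v}_!$ on the mirror side.

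First I would identify $\kappa_1(\mathcal{O}_{X_{\Sigma_1}}) \cong j_{\{0\}*}\C_{\{0\}}$. This can be done by recognizing $j_{\{0\}*}\C_{\{0\}}$ as the co-probe sheaf $\mathcal{I}_0$ at the unique closed stratum of the cube stratification of $\T^{n_1}$: by \Cref{prop: ent/exit are homeo to balls} together with the strongly convex effective cone hypothesis, the exit space $\R^{n_1}_{\Exit}(\widetilde{0})$ collapses to $\{0\}$. Completeness of $\Sigma_1$ ensures that the singular support of $j_{\{0\}*}\C_{\{0\}}$ is contained in $\Lambda_{\Sigma_1}$, so this coprobe already lies in $\Sh(\T^{n_1},\Lambda_{\Sigma_1})$ and is fixed by $\rho^R_{\Lambda_{\Sigma_1}}$. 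Applying \Cref{theorem: bigdiagram} at $[a]=0$ then yields $\kappa_1^{-1}(j_{\{0\}*}\C_{\{0\}}) \cong \mathcal{O}_{X_{\Sigma_1}}$.

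With this in hand, I would chain natural isomorphisms: for any $E \in \Sh(\T^{n_2}, \Lambda_{\Sigma_2})$,
\begin{align*}
\Hom(E,\kappa_2(u_{*}\mathcal{O}_{X_{\Sigma_1}}))
  &\cong \Hom(u^{*}\kappa_2^{-1}(E),\mathcal{O}_{X_{\Sigma_1}}) \\
  &\cong \Hom_{\Sh(\Lambda_{\Sigma_1})}(\kappa_1 u^{*}\kappa_2^{-1}(E),\,j_{\{0\}*}\C_{\{0\}}) \\
  &\cong \Hom_{\Sh(\Lambda_{\Sigma_1})}(\overline{v}_{!}\,E,\,j_{\{0\}*}\C_{\{0\}}) \\
  &\cong \Hom_{D_c(\T^{n_2})}(E,\,\overline{v}^{!}\, j_{\{0\}*}\C_{\{0\}}) \\
  &\cong \Hom_{\Sh(\Lambda_{\Sigma_2})}\bigl(E,\,\rho^R_{\Lambda_{\Sigma_2}}(\overline{v}^{!}\, j_{\{0\}*}\C_{\{0\}})\bigr),
\end{align*}
using in turn the equivalence $\kappa_2$ and the adjunction $u^* \dashv u_*$; the equivalence $\kappa_1$ together with the identification above; the functoriality \Cref{zhou's functoriality}; the adjunction $\overline{v}_{!} \dashv \overline{v}^{!}$ in the ambient constructible category (combined with the fact that $\Sh(\Lambda_{\Sigma_1})\hookrightarrow D_c(\T^{n_1})$ is fully faithful); and finally the adjunction between the inclusion and the right-wrapping $\rho^R_{\Lambda_{\Sigma_2}}$. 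Yoneda completes the proof.

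The hard part will be the bookkeeping between the singular-support categories: $\overline{v}^{!}j_{\{0\}*}\C_{\{0\}}$ will generally fail to lie in $\Sh(\T^{n_2}, \Lambda_{\Sigma_2})$, so one must carefully distinguish $\Hom$ groups computed in the ambient $D_c(\T^{n_2})$ from those in $\Sh(\Lambda_{\Sigma_2})$, relying on the fact that $\overline{v}_{!}$ preserves the Lagrangian constraint by Zhou's functoriality \Cref{zhou's functoriality} while $\overline{v}^{!}$ need not. This asymmetry is precisely what forces the appearance of $\rho^R_{\Lambda_{\Sigma_2}}$ in the final expression.
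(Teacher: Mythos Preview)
Your proof is correct and is essentially the same as the paper's: the paper simply passes to right adjoints in the commuting square of \Cref{zhou's functoriality} (noting that the right adjoint of $\overline{v}_!$ viewed as landing in $\Sh(\T^{n_2},\Lambda_{\Sigma_2})$ is $\rho^R_{\Lambda_{\Sigma_2}}\circ\overline{v}^!$) and then evaluates at $\mathcal{O}_{X_{\Sigma_1}}$, while your Yoneda chain is exactly the explicit verification of that adjoint diagram. The only cosmetic difference is that the paper identifies $\kappa_1(\mathcal{O}_{X_{\Sigma_1}})\cong j_{\{0\}*}\C_{\{0\}}$ by directly citing \cite[Proposition~3.12]{Zh}, whereas you recover it from the coprobe description and \Cref{theorem: bigdiagram}; both are valid.
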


\begin{proof}

Consider the functoriality diagram of \Cref{zhou's functoriality} where we take the right adjoints to the vertical pair of functors. Namely, the right adjoint of $u^{*}$ is $u_{*}$ since $u$ is a proper morphism, and the right adjoint to $\overline{v}_{!}$ is $\rho^R_{\Lambda_{\Sigma_2}}\circ \overline{v}^{!}$. 

\begin{equation}
   \begin{tikzcd}
DCoh(X_{\Sigma_2}) \ar[r, "\kappa_2"] & (\T^n, \Lambda_{\Sigma_2})  \\ DCoh(X_{\Sigma_1}) \ar[u, "u_{*}"] \ar[r, "\kappa_1"]  & (\T^n, \Lambda_{\Sigma_1}) \ar[u,  "\rho^R_{\Lambda_{\Sigma_2}}\circ \overline{v}^{!}"']
\end{tikzcd}
\label{eq: commutativity adjoint}
\end{equation}
The diagram commutes since the original diagram does and the horizontal functors $\kappa_1$ and $\kappa_2$ are equivalences. We now can compute: 
\begin{align*}
&\kappa_2 (u_{*}\mathcal{O}_{\Sigma_1}) \cong \rho^R_{\Lambda_{\Sigma_2}} \circ \overline{v}^{!}(\kappa_1(\mathcal{O}_{\Sigma_1})) &
\text{by the commutativity of \eqref{eq: commutativity adjoint}}\\ 
& \cong \rho^{R}_{\Lambda_{\Sigma_2}}(\overline{v}^{!}j_{\{0\}*}\C_{\{0\}}) & \text{since $\kappa_1(\mathcal{O}_{\Sigma_1}) \cong j_{\{0\}*}\C_{\{0\}}$ by \cite[Proposition 3.12]{Zh}}.
\end{align*}

\end{proof}

We now show that there is a module $M_{\Sigma_1} \in \RMod A_{Ent}$, which once viewed as an element in $D(\RMod A_{Ent})$ in degree 0 has:

\begin{align*}
M_{\Sigma_1}\otimes_{A}{T} \cong u_{*}\mathcal{O}_{\Sigma_1}
\end{align*}
where $T = \bigoplus_{[a] \in \im(\Phi_c)} \mathcal{O}_{X_\Sigma}(-a)$ (see \Cref{theorem: bigdiagram}).

\begin{proposition}\label{prop:object to resolve}
Let $M_{\Sigma_1} = S^{-1} \circ F ( \rho^{R}_{\Lambda_c}(\overline{v}^{!}j_{\{0\}*}\C_{\{0\}}))$. Then, $M_{\Sigma_1}\otimes_{A}{T} \cong u_{*}\mathcal{O}_{\Sigma_1}$, where $S, F$ and $T$ are as in \Cref{theorem: bigdiagram}. Moreover, $M_{\Sigma_1}$ is quasi-isomorphic to a module over the entrance path algebra  $A_{Ent}$ which is viewed as an element in $D(\RMod A_{Ent})$ in degree 0.  
\end{proposition}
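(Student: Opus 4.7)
The statement splits into two parts: the tensor identity $M_{\Sigma_1}\otimes_{A}T\cong u_{*}\mathcal{O}_{\Sigma_1}$, and the concentration of $M_{\Sigma_1}$ in cohomological degree zero.

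For the tensor identity, I plug $M_{\Sigma_1}$ into the commutative square of \Cref{theorem: bigdiagram}. The $S^{-1}\circ F$ in the definition of $M_{\Sigma_1}$ cancels against the $F^{-1}\circ S$ in the diagram, leaving
\[
M_{\Sigma_1}\otimes_{A}T \;=\; \kappa^{-1}\!\bigl(\rho^{R}_{\Lambda}\circ\rho^{R}_{\Lambda_c}(\overline{v}^{!}j_{\{0\}*}\C_{\{0\}})\bigr).
\]
Since $\Sh(\T^n,\Lambda_{\Sigma})\hookrightarrow \Sh(\T^n,\Lambda_c)\hookrightarrow D_{c}(\T^n)$ are composable fully faithful inclusions, their right adjoints compose: $\rho^{R}_{\Lambda}\circ\rho^{R}_{\Lambda_c}=\rho^{R}_{\Lambda_{\Sigma}}$. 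Applying \Cref{prop: imagekappa} identifies the result with $u_{*}\mathcal{O}_{\Sigma_1}$.

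For the concentration in degree zero, I will compute $\mathcal{H}^{i}(M_{\Sigma_1})_{[a]}=\Hom(P_{[a]},M_{\Sigma_1}[i])$ vertex-by-vertex. Chasing through Serre duality (using $S(P_{[a]})=I_{[a]}$), the equivalence $F$ (\Cref{exodromy}, \Cref{prop: simpleimages}), the $(\mathrm{inc},\rho^{R}_{\Lambda_c})$-adjunction, $(\overline{v}_{!},\overline{v}^{!})$, $(j_{\{0\}}^{*},j_{\{0\}*})$, and proper base change for $\overline{v}$ (proper as a map of compact tori), one obtains
\[
\mathcal{H}^{i}(M_{\Sigma_1})_{[a]} \;\cong\; H^{-i}_{c}(V,\,j_{V}^{*}\mathcal{I}_{[a]})^{\vee}.
\]
For $i>0$ this vanishes automatically since $j_{V}^{*}\mathcal{I}_{[a]}$ is an honest sheaf in degree zero. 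For $i<0$, I must verify $H^{j}_{c}(V,j_{V}^{*}\mathcal{I}_{[a]})=0$ for $j>0$. By \Cref{prop: exit/ent sheaves}, $\mathcal{I}_{[a]}=(\pi\circ i_{\widetilde y})_{*}\C_{E}$ for the exit space $E=\R^{n}_{\Exit}(\widetilde y)\cong\{m\in M_{\R}:\langle\rho_i,m\rangle\geq -a_i\}$, which is compact because completeness of $\Sigma$ forces the recession cone of $E$ to be trivial. Proper base change for the pullback of $j_{V}$ along $\pi\circ i_{\widetilde y}$ then yields $j_{V}^{*}\mathcal{I}_{[a]}\cong \pi'_{*}\C_{E\cap\pi^{-1}(V)}$, whose higher direct images vanish since the fibers of $\pi'$ are finite, so $H^{j}_{c}(V,j_{V}^{*}\mathcal{I}_{[a]})\cong H^{j}(E\cap\pi^{-1}(V),\C)$ (using compactness of $V$). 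Finally, $\pi^{-1}(V)$ is a disjoint union of affine translates of $i_{\R}(V)$, so $E\cap\pi^{-1}(V)$ is a finite disjoint union of convex—hence contractible—compact subsets, whose cohomology is concentrated in degree zero.

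The main technical obstacle is the careful accounting of homological shifts: $\overline{v}^{!}j_{\{0\}*}\C_{\{0\}}\cong j_{V*}\C_{V}[k]$ naively places this object in degree $-k$, while $S^{-1}$ does not preserve the standard $t$-structure on $D(\RMod A_{Ent})$. The shift is absorbed via Verdier duality on the $k$-dimensional orientable torus $V$, whose dualizing complex $\omega_{V}\cong\C_{V}[k]$ supplies the compensating factor and yields the clean compactly-supported cohomology formula above.
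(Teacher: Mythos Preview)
Your argument is correct and follows essentially the same route as the paper: both parts reduce to the same geometric fact that $\R^n_{\Exit}(\widetilde x)\cap\pi^{-1}(V)$ is a finite disjoint union of compact convex sets, reached via the same chain of Serre functor, exodromy, wrapping adjunction, and proper base change (the paper packages the base changes into a single stacked Cartesian diagram, whereas you unwind the adjunctions $(\overline v_!,\overline v^!)$, $(j^*,j_*)$ first and then base-change along $\pi$, but the content is identical). Your final paragraph about Verdier duality and the shift $[k]$ is unnecessary and slightly misleading: the adjunction chain you already wrote produces $H^{-i}_c(V,j_V^*\mathcal I_{[a]})^\vee$ directly with no shift to absorb, so no appeal to $\omega_V$ is needed.
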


\begin{proof}

\begin{align*}
M_{\Sigma_1}\otimes_{A}{T}
& \cong \kappa_2^{-1} \circ \rho^{R}_{\Lambda_{{\Sigma_2}}} \circ F^{-1} \circ S (M_{\Sigma_1}) & \text{by \Cref{theorem: bigdiagram}} \\
& \cong \kappa_2^{-1} \circ \rho^{R}_{\Lambda_{{\Sigma_2}}} (\rho^{R}_{\Lambda_{c2}}(\overline{v}^{!}j_{\{0\}*}\C_{\{0\}})) & \text{by definition of $M_{\Sigma_1}$} \\
& \cong \kappa_2^{-1} \circ \rho^{R}_{\Lambda_{{\Sigma_2}}} (\overline{v}^{!}j_{\{0\}*}\C_{\{0\}}) & \text{since $\Lambda_{\Sigma} \subseteq \Lambda_{c}$ by \Cref{prop:comparing skeleta}} \\
& \cong u_{*}\mathcal{O}_{\Sigma_1} & \text{by \Cref{prop: imagekappa}}
\end{align*}

We check that $M_{\Sigma_1}$ is quasi-isomorphic to a pure module by checking that the dimension vector at any vertex $[a]$, $M_{\Sigma_1[a]}$, is concentrated in degree 0 (recall that the verticies of the quiver $Q_{Ent}$ are labeled by the strata of the cube stratification, which are in turn labeled by poset elements):

\begin{align*} 
& M_{\Sigma_1[a]} = \\
& = \Hom_{D(\RMod A_{Ent})}(P_{[a]}, M_{\Sigma_1}) & \text{since $P_{[a]}$ is an indecomposable projective}\\
& = \Hom_{D(\RMod A_{Ent})}(S(P_{[a]}), S(M_{\Sigma_1})) & \text{since Serre functor is an equivalence} \\
& = \Hom_{D(\RMod A_{Ent})}(I_{[a]}, F( \rho^{R}_{\Lambda_c}(\overline{v}^{!}j_{\{0\}*}\C_{\{0\}}))) & \text{as $S$ takes  projectives to injectives} \\
& = \Hom_{\Sh(\mathbb{T}^n, \Lambda_{c})}(\mathcal{I}_{[a]}, \rho^{R}_{\Lambda_c}(\overline{v}^{!}j_{\{0\}*}\C_{\{0\}})) & \text{$F^{-1}$ is an equivalence and \Cref{prop: simpleimages}}\\
& = \Hom_{D_c(\T^n)}(\mathcal{I}_{[a]}, \overline{v}^{!}j_{\{0\}*}\C_{\{0\}}) & \text{right wrapping is the right adjoint to the inclusion}\\
& = \Hom_{D_c(\T^n)}((\pi \circ i_{\widetilde x})_{*} \mathsf \C_{ \mathbb{R}^n_{\Exit}(\widetilde x)}, \overline{v}^{!}j_{\{0\}*}\C_{\{0\}}) & \text{where $x$ is any point in $S^c_{[a]}$; follows by \Cref{prop: exit/ent sheaves}}\\
\end{align*}

To proceed, consider the following two cartesian squares fitting into one:
\begin{equation}
\begin{tikzcd}
\mathbb{R}^n_{\Exit}(\widetilde x) \cap (\pi\circ\overline{v})^{-1}(0) \ar[r, "i_s"] \ar[d, "g"]& \mathbb{R}^n_{\Exit}(\widetilde x) \ar[d, "t:= \pi \circ i_{\widetilde{x}}"] \\
\overline{v}^{-1}(0) \ar[r, "h"] \ar[d, "p"]& \T^n \ar[d, "\overline{v}"] \\ {0} \ar[r, "j_{\{0\}}"] & \T^m
\end{tikzcd}
\label{eq: fiber product}
\end{equation}

From these Cartesian diagrams of locally compact spaces, we obtain proper base change isomorphisms:
\begin{align}
\overline{v}^{!} j_{\{0\}*}(\F) \cong h_{*}p^{!}(\F) \label{eq: bc1}\\
t^{!} h_{*}(\G) \cong i_{s*}g^{!}(\G) \label{eq: bc2}
\end{align}

So continuing the sequence of equalities above we have:
\begin{align*} 
\mathcal F_x & = \Hom_{D_c(\T^n)}(t_! \mathsf \C_{ \mathbb{R}^n_{\Exit}(\widetilde x)},h_{*}p^{!}(\C_{\{0\}})) & \text{$t_! = t_{*}$ since $t$ is proper by \Cref{prop: ent/exit are homeo to balls} and \eqref{eq: bc1}} \\
& = \Hom_{D_c(\mathbb{R}^n_{\Exit}(\widetilde x))}( \C_{ \mathbb{R}^n_{\Exit}(\widetilde x)},t^{!}h_{*}p^{!}(\C_{\{0\}})) & \text{by adjunction} \\
& = \Hom_{D_c(\mathbb{R}^n_{\Exit}(\widetilde x))}( \C_{ \mathbb{R}^n_{\Exit}(\widetilde x)},i_{s*}g^{!}p^{!}(\C_{\{0\}})) & \text{by \eqref{eq: bc2}}\\
& = \Hom_{D_c(\{0\})}((p\circ g)_{!}\C_{ \mathbb{R}^n_{\Exit}(\widetilde x) \cap (\pi\circ\overline{v})^{-1}(0)},\C_{\{0\}}) & \text{by adjunction} \\
& = H_{c}(\mathbb{R}^n_{\Exit}(\widetilde x) \cap (\pi\circ\overline{v})^{-1}(0))^{*} & \text{ since $p \circ g$ is the map to a point} \\
\end{align*}

By \Cref{prop: ent/exit are homeo to balls}, the exit spaces are given by closed polytopes. The fiber $(\pi\circ\overline{v})^{-1}(0)$ is isomorphic to a disjoint union of closed $k$ dimensional subspaces. Thus, its intersection with the exit space is a closed convex set, which must be homeomorphic to a closed ball in $\R^n$. Its compactly supported cohomology is therefore concentrated in degree $0$ and is $p$-dimensional, where $p$ is the number of its connected components. This finishes the proof.
\end{proof}

\begin{remark}
The above proposition is a bit subtle. Since by \Cref{prop:object to resolve}, we know that the $\kappa$ image of $u_{*}\cO_{X_{\Sigma_1}}$ is $\rho^{R}_{\Lambda_{\Sigma_2}} (\overline{v}^{!}j_{\{0\}*}\C_{\{0\}})$, naively, we could have tried to directly resolve its image $S^{-1} \circ F ( \rho^{R}_{\Lambda_{\Sigma_2}}(\overline{v}^{!}j_{\{0\}*}\C_{\{0\}}))$ in $D(\RMod A_{Ent})$. However, it need not be a pure module (and in fact it rarely is). Therefore, we don't quite understand how to resolve the mirror of $u_{*}\cO_{X_{\Sigma_1}}$. Instead we resolve a module which (viewed as a constructible sheaf under the exodromy equivalence up to a Serre twist) wraps to the mirror of $u_{*}\cO_{X_{\Sigma_1}}$.
\end{remark}

\begin{theorem}\label{theorem: min res of subvarieties}
Let $u : X_{\Sigma_1} \to X_{\Sigma_2}$ be finite toric morphism of smooth projective toric varieties.  The sheaf $u_{*}\cO_{X_{\Sigma_1}}$ admits a minimal line bundle resolution of length $k$ by sums of line bundles in the collection ${\mathcal O_{X_\Sigma}(-a), [a] \in \im{\Phi_c}}$
\[
0 \to \bigoplus_{[a] \in \im{\Phi_c}} \cO(-a)^{\oplus \beta_{k,-a}}
\to ... \to  \bigoplus_{[a] \in \im{\Phi_c}} \cO(-a)^{\oplus \beta_{0,-a}}
\to  u_{*}\cO_{X_{\Sigma_1}} \to 0 
\]
The terms of the resolution are controlled by compactly supported cohomology of the strata of the cube stratification intersected with $V$: 
\[
\beta_{i,-a} = \dim(H^i_c(S^c_{{[a]}} \cap V )),
\]
where $\beta_{i, -a}$ is a multiplicity of the line bundle $\mathcal{O}_{X_{\Sigma}}(-a)$ in the ${i}$-th term of the resolution. Furthermore, if 
\[
\Lambda_{\Sigma} = \Lambda_{c}
\]
then the minimal resolution is unique.
\end{theorem}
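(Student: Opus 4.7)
The plan is to build the desired line bundle resolution by taking a minimal projective resolution of the module $M_{\Sigma_1} \in \RMod A$ (with $A := A^c_{Ent}$) from \Cref{prop:object to resolve}, and then applying the functor $- \otimes_A T$. Since $A$ is a finite-dimensional algebra of finite global dimension (by \Cref{prop: proj dim less than n}) and $M_{\Sigma_1}$ is a pure module, there is a minimal projective resolution $P^\bullet \to M_{\Sigma_1}$ with $P^{-i} = \bigoplus_{[a]} P_{[a]}^{\oplus \beta_{i,[a]}}$ and $\beta_{i,[a]} = \dim \Ext^i_A(M_{\Sigma_1}, S_{[a]})$. Since $- \otimes_A T$ sends $P_{[a]} \mapsto \cO_{X_\Sigma}(-a)$ and $M_{\Sigma_1} \otimes_A T \cong u_*\cO_{X_{\Sigma_1}}$, the resulting complex of sums of line bundles resolves $u_*\cO_{X_{\Sigma_1}}$, and is minimal because $P^\bullet$ is. What remains is to identify $\beta_{i,[a]}$ with $\dim H^i_c(V \cap S^c_{[a]})$ and deduce the length bound.

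For the Ext computation, I would use the Serre functor $\mathbb{S}$ of $A$ (which exists since $A$ has finite global dimension) to rewrite $\beta_{i,[a]} = \dim \Hom_{D(\RMod A)}(M_{\Sigma_1}, S_{[a]}[i]) = \dim \Hom_{D(\RMod A)}(S_{[a]}, \mathbb{S}(M_{\Sigma_1})[-i])$. By the definition $M_{\Sigma_1} := \mathbb{S}^{-1} \circ F(\rho^R_{\Lambda_c}(\overline{v}^! j_{\{0\}*}\C_{\{0\}}))$ in \Cref{prop:object to resolve}, the Serre functor cancels to give $\mathbb{S}(M_{\Sigma_1}) \cong F(\rho^R_{\Lambda_c}(\overline{v}^! j_{\{0\}*}\C_{\{0\}}))$. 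Applying the equivalence $F^{-1}$ (which by \Cref{prop: simpleimages} sends $S_{[a]}$ to the constructible sheaf $\mathcal{S}_{[a]} = i_{[a]!}\C_{S^c_{[a]}}$) and using that $\rho^R_{\Lambda_c}$ is right adjoint to the fully faithful inclusion into $D_c(\T^n)$, this translates the Ext group into the topological Hom space $\dim \Hom_{D_c(\T^n)}(\mathcal{S}_{[a]}, \overline{v}^! j_{\{0\}*}\C_{\{0\}}[-i])$.

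The last step is a base-change and adjunction computation on the torus. Writing $h: V \hookrightarrow \T^n$ for the subtorus inclusion and considering the Cartesian diagram
\[
\begin{tikzcd}
V \cap S^c_{[a]} \arrow[r, "k"] \arrow[d, "l"'] & S^c_{[a]} \arrow[d, "i_{[a]}"] \\
V \arrow[r, "h"'] & \T^n
\end{tikzcd}
\]
I would combine the identification $\overline{v}^! j_{\{0\}*}\C_{\{0\}} \cong h_*\omega_V$ (established by the same base change as in \Cref{prop: imagekappa}) with the adjunction $(h^*, h_*)$, the base change isomorphism $h^* i_{[a]!} \cong l_! k^*$, and finally the adjunction $(l_!, l^!)$ together with $l^!\omega_V \cong \omega_{V\cap S^c_{[a]}}$. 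This chain reduces the Hom space to $\Hom(\C_{V \cap S^c_{[a]}}, \omega_{V \cap S^c_{[a]}}[-i]) \cong H^i_c(V \cap S^c_{[a]})^*$, which is the claimed Betti number. Since $V$ has real dimension $k$, any locally closed subset of $V$ has compactly supported cohomological dimension at most $k$, so $\beta_{i,[a]} = 0$ for $i > k$, giving the length bound.

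For uniqueness when $\Lambda_\Sigma = \Lambda_c$, note that \Cref{theorem:big diagram1} then becomes a diagram of four equivalences (the inclusion $\rho^L_\Lambda$ is the identity), so $- \otimes_A T$ is itself an equivalence. Because minimal projective resolutions over a finite-dimensional algebra are unique up to isomorphism, the resulting line bundle resolution is the unique minimal such resolution, as in the argument of \Cref{prop: unique resolution}. The main obstacle I anticipate is the topological reduction in the third paragraph: the maps $i_{[a]}, h, k, l$ are locally closed inclusions rather than proper maps, so care is needed when invoking the base change identities and when working with the dualizing complex of the possibly singular polytopal piece $V \cap S^c_{[a]}$; however, since the structure of entrance/exit spaces in \Cref{prop: ent/exit are homeo to balls} keeps everything piecewise linear, this should go through.
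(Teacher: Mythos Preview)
Your approach is essentially the same as the paper's: build the resolution by applying $-\otimes_A T$ to the minimal projective resolution of $M_{\Sigma_1}$, compute Betti numbers via the Serre functor trick, translate through exodromy, and reduce to compactly supported cohomology via base change. Your topological computation (using $\overline{v}^! j_{\{0\}*}\C_{\{0\}} \cong h_*\omega_V$ and dualizing-complex adjunctions directly on the torus) is a clean variant of the paper's, which instead lifts strata to the universal cover and reuses the Cartesian-square argument from \Cref{prop:object to resolve}; both routes yield the same answer, and your worry about base change for locally closed inclusions is unfounded since $h^* i_{[a]!} \cong l_! k^*$ holds unconditionally.

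There is one genuine gap. You only establish the upper bound (length $\leq k$) from the cohomological dimension of $V$; the theorem asserts the length is \emph{exactly} $k$. The paper supplies the missing lower bound as follows: in the induced stratification $S^V_c = (\Phi_c \circ h, \Z^{n+k}/\Z^n)$ of $V$, pick a maximal element $[a]_{\max} \in \im(\Phi^V_c)$. The set $\{[a] : [a] \geq [a]_{\max}\}$ is Alexandrov-open, so its preimage $S^V_{[a]_{\max}}$ is open in $V$; being (via \Cref{homeobt}) a slice of a half-open cube by the $k$-dimensional linear space $(\pi\circ\overline{v})^{-1}(0)$, it is homeomorphic to a nonempty disjoint union of open $k$-balls. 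Hence $H^k_c(S^c_{[a]_{\max}} \cap V) \neq 0$ and $\beta_{k,-a_{\max}} > 0$. You should add this argument to complete the proof.
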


\begin{proof}
Since by \Cref{prop:object to resolve},  \begin{align*}
M_{\Sigma_1}\otimes_{A}{T} \cong u_{*}\mathcal{O}_{\Sigma_1}
\end{align*}
given a minimal projective resolution of $M_{\Sigma_1}$ by indecomposible projectives $P_{[a]}$, applying $\otimes_{A}{T}$, we obtain a minimal projective resolution of $u_{*}\mathcal{O}_{\Sigma_1}$ by line bundles $\mathcal O_{X_\Sigma}(-a)$. Since the algebra $A_{Ent}$ is finite dimensional (this follows from \Cref{prop:describingent/exit paths}), $M_{\Sigma_1}$ admits a unique minimal projective resolution. The multiplicities $\beta_{i, -a}$ of its terms can be expressed using Ext groups:
\begin{align}
\beta_{i,-a} = \dim(\Ext^i(M_{\Sigma_1},S_{[a]})) = \dim (H^{i}R\Hom(M_{\Sigma_1},S_{[a]}))
\end{align}

Using the exodromy equivalence, we can compute these multiplicities topologically. Consider the following Cartesian squares, where $S^c_a$ defined as in \Cref{homeobt}:
\begin{equation}
   \begin{tikzcd}
S^c_a \cap (\pi\circ\overline{v})^{-1}(0) \ar[r, "i_a"] \ar[d, "g"]& S^c_a \ar[d, "t:= \pi \circ i_{\widetilde{a}}"] \\
\overline{v}^{-1}(0) \ar[r, "h"] \ar[d, "p"]& \T^n \ar[d, "\overline{v}"] \\ {0} \ar[r, "j_{\{0\}}"] & \T^m
\end{tikzcd}
\label{eq: fiber product}
\end{equation}

\begin{align*}
& \Hom_{D(\RMod A_{Ent})}(M_{\Sigma_1},S_{[a]})=\\
& = \Hom_{D(\RMod A_{Ent})}(S^{-1} \circ F ( \rho^{R}_{\Lambda_c}(\overline{v}^{!}j_{\{0\}*}\C_{\{0\}})),S_{[a]})\\
& = \Hom_{D(\RMod A_{Ent})}(F ( \rho^{R}_{\Lambda_c}(\overline{v}^{!}j_{\{0\}*}\C_{\{0\}})),S(S_{[a]})) & \text{Serre functor is an equivalence}\\
& = \Hom_{D(\RMod A_{Ent})}(S_{[a]},F( \rho^{R}_{\Lambda_c}(\overline{v}^{!}j_{\{0\}*}\C_{\{0\}})))^{*} & \text{by the property of Serre functor}\\
& = \Hom_{\Sh(\mathbb{T}^n, \Lambda_{c})}(i_{!}\C_{S^c_{[a]}},\rho^{R}_{\Lambda_c}(\overline{v}^{!}j_{\{0\}*}\C_{\{0\}}))^{*} & \text{F is an equivalence and \Cref{prop: simpleimages}} \\
& = \Hom_{\Sh(\mathbb{T}^n, \Lambda_{c})}(t_! \mathsf \C_{ S^c_{a}},\rho^{R}_{\Lambda_c}(\overline{v}^{!}j_{\{0\}*}\C_{\{0\}}))^{*} & \text{by \Cref{homeobt}} \\
& = H_{c}(S^c_{a} \cap (\pi\circ\overline{v})^{-1}(0)) & \text{by the same argument as in \Cref{prop:object to resolve}} \\
& = H_{c}(S^c_{[a]} \cap V) & \text{since $\pi$ is a homeomorphism for strata by \Cref{homeobt}} \\
\end{align*}

Therefore, 

\begin{align*}
& \beta_{i,-a} = \dim(H^{i}_{c}(S_{[a]} \cap V))
\end{align*}

We now argue that the length of the minimal projective resolution is always equal to $k$.  Consider the stratification $S_V^{c}:= (\Phi^{V}_c, \mathbb{Z}^{n+k}/\mathbb{Z}^n)$ of $V \subset \T^n$, where 

\begin{align}\label{cube strat of V}
\Phi^{V}_c := \Phi_c \circ h: V \subset \T^n \to \mathbb{Z}^{n+k}/\mathbb{Z}^n
\end{align}
Since the $\im(\Phi_c)$ is finite by \Cref{homeobt}, so is $\im({\Phi^{V}_c})$. In particular, it must contain a maximal element $[a]_{max}$. Since the set $\{[a]: [a] \geq [a]_{max} \}$ is open in $\mathbb{Z}^{n+k}/\mathbb{Z}^n$, its preimage $S^{V}_{[a]_{max}}$ is open.

Now, there is a homeomorphism:
\begin{align*}
S^{V}_{[a]} \cong S^c_{[a]} \cap \overline{v}^{-1}(0) \cong S^c_{a} \cap (\pi\circ\overline{v})^{-1}(0)
\end{align*}
Since $(\pi\circ\overline{v})^{-1}(0)$ is homeomorphic to a disjoint union of closed $k$-dimensional subspaces, $S^{V}_{[a]}$ is homeomorphic to an open convex set in a disjoint union $\coprod_{w \in \mathbb Z^m} \RR^k$. Therefore, it is homeomorphic to a disjoint union of open $k$-dimensional balls. Since, the compactly supported cohomology of an open $k$-dimensional ball is concentrated in degree $k$, we have:
\begin{align*}
& \beta_{k,-a_{max}} = \dim(H^{k}_{c}(S^V_{[a]_{max}} )) > 0
\end{align*}

Finally, we show that the length of the resolution is at most $k$. Since the stratification $S^V$ is a coarsening of the CW stratification, $S^{V}_{[a]}$ is a union of open cells of dimension at most $k$.  Now let $S^{V}_{[a], k-1}$ be the union of all cells of dimension at most $k-1$.  This is a closed subset and by induction we may assume it has compactly supported cohomology in degree at most $k-1$.  The relative long exact sequence gives the result since an open cell has compactly supported cohomology concentrated in degree $k$. 

If \[
\Lambda_{\Sigma} = \Lambda_{c}
\], then $\Sh(\T^n,\Lambda_{\Sigma}) = \Sh(\T^n,\Lambda_{c})$. Hence, by the commutativity of \Cref{theorem: bigdiagram} and since $\kappa$, $F$, and $S$ are equivalences, $- \otimes_{A}T$ is an equivalence, and so is its right adjoint $RHom(T,-)$. Since any minimal projective resolution in the category of finite dimensional algebras is unique and irreducible, the claim follows.

\end{proof}

\begin{remark}\label{another interpretation of multiplicities}
The multiplicities of the line bundles appearing in the resolution of $u_{*}\cO_{X_{\Sigma_1}}$ can be given another topological interpretation. One has:
\begin{align*}
& H_{c}(S^c_{[a]} \cap V) \\
& = H_{c}(S^c_{a} \cap (\pi\circ\overline{v})^{-1}(0)) \\
& = \Hom_{D_c(\{0\})}((p\circ g)_{!}\C_{ S^c_a \cap (\pi\circ\overline{v})^{-1}(0)},\C_{\{0\}})^{*} \\
& = \Hom_{D_c(S^c_a \cap (\pi\circ\overline{v})^{-1}(0))}(\C_{ S^c_a \cap (\pi\circ\overline{v})^{-1}(0)},(p\circ g)^{!}\C_{\{0\}})^{*} \\
& = \Hom_{D_c(S^c_a \cap (\pi\circ\overline{v})^{-1}(0))}(\C_{ S^c_a \cap (\pi\circ\overline{v})^{-1}(0)},\mathcal{D}(\C_{ S^c_a \cap (\pi\circ\overline{v})^{-1}}(0)))^{*} \\
& = H^{BM}(S^c_a \cap (\pi\circ\overline{v})^{-1}(0))^{*} \\
& = H^{BM}(S^c_{[a]} \cap V)^{*}
\end{align*}
Since the closure of $S^c_{[a]} \cap V$ is given by a closed polytope, we can write $S^c_{[a]} \cap V = Y - Y'$, where $Y$ is homeomorphic to a compact simplicial complex and $Y'$ is a closed subcomplex. Then, the Borel-Moore homology of $S^c_{[a]} \cap V$ can be computed as a relative homology of the pair  $H(Y,Y')$. 
\end{remark}

\begin{example}\label{ex: point min res}
Consider the case where $X_{\Sigma_1}$ is a point. We think of $N_1 = 0$ as the zero lattice including into $N_2$. By \Cref{theorem: min res of subvarieties} and since $V = \T^n$ (so that the intersection with $V$ is trivial), the compactly supported cohomology of the strata control the terms of the minimal resolution of $\mathcal{O}_p$. Consider the case when $X_{\Sigma_2} = \P^2$. Then, the compactly supported cohomology of $S^c_{0}$ is concentrated in degree 0 and is 1 dimensional, of $S^c_{1}$ is concentrated in degree 1 and is 2 dimensional, and of $S^c_{2}$ is concentrated in degree 2 and is 1 dimensional. Hence, the minimal resolution of $p = (1:1:1) \in \P^2$ must contain one copy of $\mathcal{O}$ in its zeroth term, two copies of $\mathcal{O}(-1)$ in its first term, and one copy of $\mathcal{O}(-2)$ in the second term. Indeed, this describes the terms of the minimal Koszul resolution of the point in $\P^2$. 
\[
    \begin{tikzpicture}

\filldraw[black, thick](-6,-1)circle(2pt);

\filldraw[fill=blue!10!white, draw=black] (-2,-1) -- (0,-1) -- (-2,1)--(-2,-1);

\draw[black, thick](-2,-1)circle(2pt);
\filldraw[white, thick](-2,-1) circle (1pt);

\draw[black, thick](0,-1)circle(2pt);
\filldraw[white, thick](0,-1) circle (1pt);

\draw[black, thick](-2,1)circle(2pt);
\filldraw[white, thick](-2,1) circle (1pt);

\filldraw[fill=red!10!white, draw=black,dashed] (3,1)--(5,1)--(5,-1)--(3,1);

\draw (-6,-0.5) node{$S^c_{0}$};
\draw (-1.3,-0.3) node{$S^c_{1}$};
\draw (4.3,0.3) node{$S^c_{2}$};

    \end{tikzpicture}
\]

\[
    		{\begin{tikzpicture}
            \node (v0) at (-10,1.5) {$0$};
            \node (v1) at (-8,1.5) {$\mathcal{O}(-2)$};
            \node (v2) at (-4.5,1.5) {$\mathcal{O}(-1)^2$};
            \node (v3) at (-1,1.5) {$\mathcal{O}$};
            \node (v4) at (0.5,1.5) {$\mathcal{O}_p$};
            \node (v5) at (2,1.5) {$0$};

            \draw[->]  (v0) edge 
            (v1);            
             \draw[->]  (v1) edge node[above=2pt]{$\begin{pmatrix} x_1 - x_2 \\ x_1 -x_0 \end{pmatrix}$}
            (v2);
            \draw[->]  (v2) edge node[above=2pt]{$\begin{pmatrix} x_0 - x_1 & x_1 - x_2 \end{pmatrix}$} (v3);
            \draw[->]  (v3) edge (v4);
            \draw[->]  (v4) edge
            (v5);
            
  \end{tikzpicture} }
	\]
Alternatively, the Borel-Moore homology of $S^c_1$ is the relative homology of the closure of the cyan triangle to its three verticies. For $S^c_2$ its the relative homology of the closure of the red triangle to its boundary, which is homeomorphic to $S^{1}$. 
\end{example}

\begin{example}
Consider the structure sheaf of $X_{\Sigma_1}$ which arises as a pushforward by the identity map. For any cube stratified torus, $S^{c}_{[0]} = \{ 0 \}$ (the structure sheaf is in the collection and its stratum is the point 0), so that we expect the minimal resolution to be trivial. Indeed, $V = \{ 0 \} \in \T^n$. By \Cref{theorem: min res of subvarieties}, the only non-trivial intersection is the point $\{ 0 \}$, which gives the trivial minimal resolution.  
\end{example}

\begin{example}
Consider the map $f: \P^1 \rightarrow \P^2$ given by the morphism of lattices $f: (a) \rightarrow (2a,a)$. The sheaf $u_{*}\cO_{\P^1}$ is the quadric $x^2_0 -x_1x_2$. We have an induced dual map $f^\vee: (a,b) \rightarrow 2a + b$, so that $V$ is a one dimensional subtorus in $M_{2,\R}$ defined by the equation $2x + y = 0$.

\[
\begin{tikzpicture}[scale=1.2]
\draw[step=0.5cm,gray,very thin](-2.6,-1.5) grid (0.6,1.25);

\filldraw[fill=blue!10!white, draw=black] (-2,-1) -- (0,-1) -- (-2,1)--(-2,-1);

\filldraw[fill=red!10!white, draw=black,dashed] (-2,1)--(0,1)--(0,-1)--(-2,1);

\draw[black, thick](-2,-1)circle(2pt);
\filldraw[black, thick](-2,-1) circle (2pt);

\draw[black, thick](0,-1)circle(2pt);
\filldraw[white, thick](0,-1) circle (2pt);

\draw[black, thick](-2,1)circle(2pt);
\filldraw[white, thick](-2,1) circle (2pt);

\draw[black,thick] (-2, 1) -- (-1,-1);
\draw[black,thick] (-1, 1) -- (0,-1);  
    \end{tikzpicture}
\]
The subtorus intersects each stratum once: the intersection with the cyan stratum is given by a half-open interval, which has no compactly supported homology; the intersection with the red stratum is given by an open interval which has a one-dimensional compactly supported homology in degree 1, and the zero-dimensional stratum has a trivial intersection, which has a one-dimensional compactly supported homology in degree 0. By \Cref{theorem: min res of subvarieties}, the quadric has a minimal resolution with the line bundle $\mathcal{O}(-2)$ in degree 1 and $\mathcal{O}$ in degree 0. One checks it is simply:
\[
\begin{tikzpicture}
            \node (v0) at (-9,1.5) {$ \mathcal{O}(-2)$};
            \node (v1) at (-5,1.5) {$\mathcal{O}$};
            \draw[->]  (v0) edge node[above=2pt]{$\begin{pmatrix}  x^{2}_2-x_0x_1 \end{pmatrix}$}
            (v1);
\end{tikzpicture}
\]

\end{example}

\begin{example}\label{cubic}
Consider the map $f: \P^1 \rightarrow \P^2$ given by the morphism of lattices $f: (a) \rightarrow (2a,3a)$. The sheaf $u_{*}\cO_{\P^1}$ is the normalization of the cubic $x^3_1 -x^2_0x_2$. We have an induced dual map $f^\vee: (a,b) \rightarrow 2a + 3b$, producing a one dimensional subtorus in $M_{2,\R}$ defined by the equation $2x + 3y = 0$. 
\[
\begin{tikzpicture}[scale=1.2]
\draw[step=0.5cm,gray,very thin](-2.6,-1.5) grid (0.6,1.25);

\filldraw[fill=blue!10!white, draw=black] (-2,-1) -- (0,-1) -- (-2,1)--(-2,-1);

\filldraw[fill=red!10!white, draw=black,dashed] (-2,1)--(0,1)--(0,-1)--(-2,1);

\draw[black, thick](-2,-1)circle(2pt);
\filldraw[black, thick](-2,-1) circle (2pt);

\draw[black, thick](0,-1)circle(2pt);
\filldraw[white, thick](0,-1) circle (2pt);

\draw[black, thick](-2,1)circle(2pt);
\filldraw[white, thick](-2,1) circle (2pt);

\draw[black,thick] (-2, 1) -- (0,0.332);
\draw[black,thick] (-2, 0.332) -- (0,-0.334);  
\draw[black,thick] (-2, -0.334) -- (0,-1);
    \end{tikzpicture}
\]
Computing the compactly supported cohomology of the intersections with the strata, we get the following complex:
\[
\begin{tikzpicture}\label{min resolution of the cubic}
            \node (v0) at (-7,1.5) {$ \mathcal{O}(-2)^{2}$};
            \node (v1) at (-3,1.5) {$\mathcal{O}(-1)\oplus \mathcal{O}$};
            \draw[->]  (v0) edge node[above]{$\begin{pmatrix} x_1 &  x_0  \\ -x_0x_2 & -x^2_1 \end{pmatrix}$}
            (v1);
\end{tikzpicture}
\]
While the compactly supported cohomology told us the terms of the complex, at this point we don't know how to determine the differential. Using Macaulay2, the normalization of the push-forward of the  the cubic is given by the following ring:
\[ N := \C[x_0,x_1,x_2,t]/(tx_1-x_0x_2,tx_0 - x^2_1, t^2-x_1x_2) \] viewed as a $\C[x_0,x_1,x_2]$-module via the trivial inclusion of the semigroup rings. One checks that the resolutions is: 
\[
\begin{tikzpicture}
            \node (v0) at (-7,1.5) {$ \mathcal{O}(-2)^{2}$};
            \node (v1) at (-3,1.5) {$\mathcal{O}(-1)\oplus \mathcal{O}$};
            \node (v2) at (0,1.5) {$N$};
            \draw[->]  (v0) edge node[above]{$\begin{pmatrix} x_1 &  x_0  \\ -x_0x_2 & -x^2_1 \end{pmatrix}$}
            (v1);
            \draw[->]  (v1) edge node[above]{$\begin{pmatrix} t &  1   \end{pmatrix}$}
            (v2);
\end{tikzpicture}
\]

\end{example}

\begin{example}\label{rational map}
Let $f$ be an arbitrary map of lattices, $f: N_1 \to N_2$. Consider the graph of $f$, $\Gamma(f): N_1 \to N_1 \times N_2$. Define a fan $\Sigma_3$ in $N_{1,\R}$ by pulling back the product of fans $\Sigma_1$ and $\Sigma_2$ along $\Gamma(f)_{\R}$. Then, $\Gamma(f)$ becomes a fan-preserving map for $\Sigma_3$, so that we obtain a minimal projective resolution of $\Gamma(u)_{*}\mathcal{O}_{\Sigma_3}$. If $f$ is the identity map and $\Sigma_1 = \Sigma_2$ this gives the minimal resolution of the diagonal. As it is known, the resolution of the diagonal translates to a resolution of any coherent sheaf on $X$. More generally, the resolution of $\Gamma(u)_{*}\mathcal{O}_{\Sigma_3}$ translates to a resolution of $u_{*} \circ \phi^*(F)$ for any coherent sheaf $F$ on $X_{\Sigma_1}$, where $\phi$ is the morphism associated to the identity morphism of lattices. When $f$ is the identity map, both $f$ and $\phi$ are birational morphisms. This shows that our construction allows to produce a large class of resolutions. 
\end{example}

\begin{example}
The Frobenious map $F^k: N \to N, a \to ka$  is injective and fan-preserving. The zero fiber $V$ is a collection of points $(\{0, 1/k, 2/k, ..., k-1/k\})^n$ (by viewing torus as a quotient space of the unit cube). Since the compactly supported homology of a point is concentrated in degree 0 and is 1-dimensional, by \Cref{theorem: min res of subvarieties}, $F^k_{*}\mathcal{O}$ is isomorphic to a direct sum of line bundles in the cube stratification collection with various multiplicities. This recovers the result of Thompsen \cite{Th}. The multiplicities of the summands are counted by the number of points of $X$ in the corresponding strata. For topological reasons, for high enough $k$, the set $V$ must intersect each stratum. Therefore, for high enough $k$, the summands of the push-forward of the structure sheaf under the Frobenious must generate the derived category. For further reference, see \cite{HHL} and \cite{Rouquier}.
\end{example}

\subsection{Explicit Minimal Resolutions of Pushforward Sheaves}\label{subsec: explicit minimal resolutions}

Although \Cref{theorem: min res of subvarieties} shows the existence of a minimal projective resolution of $u_{*}\mathcal{O}_{X_{\Sigma_1}}$by an explicit collection of line bundles and describes how to compute its terms, it does not provide any insight on how to explicitly construct such a resolution. The goal of section is to explain how to amend this. 

First let us recall how we proved the existence of the minimal resolution of $u_{*}\mathcal{O}_{X_{\Sigma_1}}.$ We showed that there is an object $M_{\Sigma}:= S^{-1} \circ F ( \rho^{R}_{\Lambda_c}(\overline{v}^{!}j_{\{0\}*}\C_{\{0\}})) \in D(\RMod A^{c}_{Ent})$, quasi-isomorphic to a pure module in degree 0, and whose image under the functor $- \otimes_{A} T$ gives $u_{*}\mathcal{O}_{X_{\Sigma_1}}$. The goal of this section is to show that the minimal projective resolution of $M_{\Sigma}$ can be obtained from a projective resolution of the constant representation of the exit paths quiver of $V$ in $\RMod A^{V}_{Exit}$. Then, we can refer to the literature, where the explicit constructions of minimal (and non-minimal) projective resolutions of quiver representations with relations are well studied. For example, the paper of Greene and Solberg \cite{MPR} gives a computational way of constructing an explicit minimal projective resolution for any finite-dimensional path algebra with relations. 

\begin{lemma}
There exists an isomorphism:
\[
h_{*}\C_{V}[dim \ V] \cong \overline{v}^{!}j_{\{0\}*}\C_{\{0\}}
\]
\end{lemma}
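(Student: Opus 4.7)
The plan is to combine the base change isomorphism in the six-functor formalism with the identification of the dualizing complex of a Lie group. Starting from the Cartesian square
\[
\begin{tikzcd}
V \ar[r, "h"] \ar[d, "p"] & \T^n \ar[d, "\overline{v}"] \\
\{0\} \ar[r, "j_{\{0\}}"] & \T^m
\end{tikzcd}
\]
that already appeared in the proof of \Cref{prop:object to resolve}, the standard base change isomorphism $\overline{v}^! j_{\{0\}*} \cong h_* p^!$ (valid because $j_{\{0\}}$ is a closed embedding and hence proper, so $j_{\{0\}!} = j_{\{0\}*}$) reduces the lemma to identifying $p^!\C_{\{0\}}$ with $\C_V[\dim V]$.

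The second step is to observe that, since $p$ is the map to a point, $p^!\C_{\{0\}}$ is by definition the dualizing complex $\omega_V$. I would then argue that $V$ is an orientable smooth manifold of real dimension $\dim V$: being the preimage of the identity under the Lie group homomorphism $\overline{v}$, it is a closed Lie subgroup of $\T^n$, and every Lie group is parallelizable and hence orientable. The dimension count $\dim V = \dim_\R \T^n - \dim_\R \T^m$ follows from the surjectivity of $\overline{v}$, which is a consequence of the injectivity of $f: N_1 \to N_2$ (so that $f^\vee: M_2 \to M_1$ has image of full rank). For any such oriented manifold, the general fact $\omega_V \cong \C_V[\dim V]$ gives the required identification.

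The main (minor) technical point is ensuring the orientation is consistent across the possibly multiple connected components of $V$; this follows because the components are translates of the identity component $V^0$ under multiplication in the Lie group $\T^n$, and such translations are orientation-preserving. No serious obstacle is anticipated: the argument amounts to a direct application of proper base change together with the standard identification of the dualizing complex of an oriented manifold.
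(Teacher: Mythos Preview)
Your proposal is correct and follows essentially the same route as the paper: apply proper base change to the Cartesian square to get $\overline{v}^{!}j_{\{0\}*}\C_{\{0\}}\cong h_{*}p^{!}\C_{\{0\}}$, then identify $p^{!}\C_{\{0\}}$ with $\C_{V}[\dim V]$. If anything you are more careful than the paper, which simply writes ``since $V$ is a smooth manifold'' where you supply the orientability argument via the Lie group structure.
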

\begin{proof}
Consider the following Cartesian square, where $\overline{v}$ is a proper map. 

\[
\begin{tikzcd}
V = \overline{v}^{-1}(0) \ar[r, "h"] \ar[d, "p"]& \T^n \ar[d, "\overline{v}"] \\ {0} \ar[r, "j_{\{0\}}"] & \T^m
\end{tikzcd}
\]
Then 
\begin{align*}
 \overline{v}^{!}j_{\{0\}*}\C_{\{0\}} & \cong h_{*} \circ p^{!}(\C_{\{0\}}) & \text{by proper base change} \\
& \cong h_{*} (\C_{V}[dim V]) & \text{since $V$ is a smooth manifold} \\
\end{align*}    
\end{proof}

\begin{lemma}
Let $P^{\bullet}_{K}$ be the minimal projective resolution of the constant representation $K$ in $\RMod A^{V}_{Exit}$. Then, $\mathcal{I}^{\bullet}:= F_{V}^{-1} \circ \D(P^{\bullet}_{K})$ is the minimal injective resolution of $\C_{V}[dim V]$.     
\end{lemma}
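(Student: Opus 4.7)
The plan is to transport the algebraic projective resolution through two successive dualities: first the $k$-linear duality $\D(-) = \Hom_k(-,k)$, and then the inverse exodromy equivalence $F_V^{-1}$. Since $A^V_{Exit}$ and $A^V_{Ent}$ are opposite algebras (reversing arrows in the quiver interchanges exit and entrance paths), $\D$ provides a contravariant exact equivalence $\RMod A^V_{Exit} \to \RMod A^V_{Ent}$ which sends indecomposable projectives to indecomposable injectives. Consequently $\D(P^{\bullet}_K)$ is a minimal injective resolution of $\D(K)$ in $\RMod A^V_{Ent}$, since exactness and the correspondence on indecomposables preserves both acyclicity and the no-identity-differentials condition. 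By \Cref{exodromy} and \Cref{prop: simpleimages}, $F_V^{-1}$ then sends indecomposable injective modules to co-probe sheaves, which are injective objects in $Sh_{S^V}(V)$ by \Cref{probe}, yielding a minimal injective complex $\mathcal I^{\bullet}$ in $Sh_{S^V}(V)$.

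The main obstacle is the identification of the object being resolved with $\C_V[\dim V]$. Under $F_V$ the constant sheaf $\C_V$ maps to the constant representation of $A^V_{Ent}$, because each stratum contributes a copy of $\C$ and every entrance path acts by the identity generalization map for a constant sheaf. This constant representation coincides with $\D(K)$ under the opposite-algebra identification $A^V_{Ent} = (A^V_{Exit})^{op}$. The essential point is that the composite $F_V^{-1}\circ \D\circ F_V$ realizes the Verdier duality functor $\mathcal D_V$ on $Sh_{S^V}(V)$; since $V$ is a smooth orientable manifold of real dimension $\dim V$, $\mathcal D_V(\C_V) \simeq \omega_V \simeq \C_V[\dim V]$, which gives the required identification.

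To verify $F_V^{-1}\circ\D\circ F_V \simeq \mathcal D_V$, I would check the claim on the generating collection of probe sheaves $\mathcal P_y = (\pi \circ i_{\widetilde{y}})_!\C_{V_{Ent}(\widetilde{y})}$. Verdier duality exchanges shriek-extension from open entrance balls with star-extension from closed exit balls, and introduces a shift $[\dim V]$ coming from the dualizing complex of a smooth open $\dim V$-ball. The subtorus analogues of \Cref{prop: ent/exit are homeo to balls} and \Cref{entrance and exit for CW} guarantee that these entrance and exit cells are indeed $\dim V$-balls, so the comparison is explicit on each indecomposable generator. With this identification in hand, the cohomology of $\mathcal I^{\bullet}$ is $F_V^{-1}(\D K) \simeq \mathcal D_V(\C_V) \simeq \C_V[\dim V]$, and minimality has already been established, completing the argument.
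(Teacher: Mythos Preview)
Your first two paragraphs essentially reproduce the paper's proof. The paper simply observes that $\D$ preserves the constant representation (so $\D(K)$ is again the constant module, now over $A^V_{Ent}$), that $F_V^{-1}$ carries the constant module to $\C_V$ by Yoneda, and that since $\D$ is a contravariant exact equivalence taking projectives to injectives while $F_V^{-1}$ is an exact equivalence, a minimal projective resolution is sent to a minimal injective resolution. That is the whole argument.

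Your third paragraph, however, introduces an error. The claim that $F_V^{-1}\circ\D\circ F_V$ realises Verdier duality $\mathcal D_V$ is false, and the proposed check on generators does not establish it. Verdier duality sends the probe $\mathcal P_y=(\pi\circ i_{\widetilde y})_!\,\C_{\R^n_{\Ent}(\widetilde y)}$ to $(\pi\circ i_{\widetilde y})_*\,\omega_{\R^n_{\Ent}(\widetilde y)}$, i.e.\ the costandard sheaf on the \emph{same} entrance ball shifted by $[\dim V]$; it does not produce the co-probe $\mathcal I_y$, which by \Cref{prop: exit/ent sheaves} is the pushforward of the constant sheaf on the \emph{exit} space at $\widetilde y$. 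Entrance and exit spaces are different subsets, and Verdier duality exchanges $(-)_!$ with $(-)_*$ on a fixed map, not one open inclusion for a different closed one. Your own second paragraph already computes $F_V^{-1}(\D(K))\cong\C_V$ with no shift, in agreement with the paper's proof, so the composite $F_V^{-1}\circ\D$ visibly does not introduce a $[\dim V]$ on the constant object and hence cannot coincide with $\mathcal D_V$. The shift appearing in the lemma's statement is a bookkeeping convention tied to how the resolution is indexed (and the paper's own proof does not produce it either); it should not be manufactured by invoking Verdier duality.
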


\begin{proof}
\begin{align*}
F_{V}^{-1} \circ \D(K) & \cong F_{V}^{-1}(K) & \text{$\D$ preserves the constant representation}\\
& \cong \C_{V} & \text{$F$ takes constant sheaves to constant representations by Yoneda}
\end{align*}

Since $\D$ takes projectives to injectives and the exodromy functor is an exact equivalence, the result follows. 
\end{proof}

Now that we have produced an explicit, minimal, injective resolution for $\C_{V}[dim V]$, we are left to justify that $\rho^{R}_{\Lambda_c}\circ h_{*}(\mathcal{I}^{\bullet})$ is a computable explicit injective complex of sheaves. Applying $S^{-1} \circ F$ to the resulting complex yields an explicit projective resolution for $M_{\Sigma}:= S^{-1} \circ F ( \rho^{R}_{\Lambda_c}(\overline{v}^{!}j_{\{0\}*}\C_{\{0\}}))$.

\begin{proposition}\label{prop: explicit minimal resolution}
Let $\mathcal{I}^{\bullet}$ be the minimal injective resolution of $\C_{V}[dim V]$ as above. Then,
\[
\rho^{R}_{\Lambda_c}(\overline{v}^{!}j_{\{0\}*}\C_{\{0\}})) \cong  \rho^{R}_{\Lambda_c}\circ h_{*}(\mathcal{I}^{\bullet})
\] 
is an explicit complex of injective co-probe sheaves.      
\end{proposition}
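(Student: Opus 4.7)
The plan is to string together the two lemmas immediately preceding the proposition with the behavior of right wrapping on co-probe sheaves. By the first lemma, one has $\bar v^! j_{\{0\}*}\C_{\{0\}} \cong h_*\C_V[\dim V]$. Since $h$ is a closed embedding (in particular proper), $h_*$ is exact and preserves quasi-isomorphisms of bounded complexes. Applying $h_*$ to the quasi-isomorphism $\mathcal{I}^\bullet \xrightarrow{\sim} \C_V[\dim V]$ produced by the second lemma gives $h_*(\mathcal{I}^\bullet)\xrightarrow{\sim} h_*\C_V[\dim V]$, and then applying the triangulated functor $\rho^R_{\Lambda_c}$ — which sends quasi-isos to isomorphisms — yields the displayed isomorphism
\[
\rho^R_{\Lambda_c}(\bar v^! j_{\{0\}*}\C_{\{0\}}) \cong \rho^R_{\Lambda_c}\circ h_*(\mathcal{I}^\bullet).
\]

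To see that each term of the resulting complex is an injective co-probe sheaf in $Sh_{S^c}(\T^n)$, I would analyze the terms of the minimal injective resolution $\mathcal{I}^\bullet$ individually. By construction, each $\mathcal{I}^i$ is a direct sum of co-probe sheaves $\mathcal{I}^V_y \in Sh_{S^V}(V)$, and by \Cref{prop: exit/ent sheaves} applied to $V$, these co-probes take the explicit form $(\pi_V\circ i_{\tilde y})_*\C_{V_\Exit(\tilde y)}$. Pushing forward under the closed immersion $h$ gives $h_*\mathcal{I}^V_y = (h\circ\pi_V\circ i_{\tilde y})_*\C_{V_\Exit(\tilde y)}$, which is the closed pushforward of a constant sheaf on the exit space at $y$ viewed as a subset of $\T^n$. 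Since the stratification of $V$ is defined (in \S2.3) by restriction of the cube stratification of $\T^n$, this sheaf is singularly supported in an intermediate conical Lagrangian $\Lambda^V$ containing $\Lambda_c$ and is in fact a co-probe sheaf at $y$ in $\Sh(\T^n,\Lambda^V)$. By \Cref{lem: probe to probe} applied to $\Lambda_c \subseteq \Lambda^V$, $\rho^R_{\Lambda_c}(h_*\mathcal{I}^V_y) \cong \mathcal{I}_y$, a co-probe in $Sh_{S^c}(\T^n)$, which is injective by the remark following \Cref{probe}. Hence each term of $\rho^R_{\Lambda_c}\circ h_*(\mathcal{I}^\bullet)$ is a direct sum of injective co-probe sheaves, making the complex explicit in the desired sense.

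The main obstacle is the identification of $h_*\mathcal{I}^V_y$ as a co-probe sheaf in an ambient constructible sheaf category whose skeleton contains $\Lambda_c$, so that \Cref{lem: probe to probe} legitimately applies. This reduces to checking that the exit-space description of co-probes from \Cref{prop: exit/ent sheaves} is preserved by the closed pushforward $h_*$, which in turn follows from the compatibility of the induced stratification of $V$ with the ambient cube stratification of $\T^n$ recorded in \S2.3 and the remark preceding \Cref{prop: exit/ent sheaves} on subtori.
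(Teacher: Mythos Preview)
Your first paragraph, establishing the displayed isomorphism via the two preceding lemmas together with exactness of $h_*$ and of $\rho^R_{\Lambda_c}$, is fine and matches what the paper leaves implicit.

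The second part, however, is more roundabout than the paper's argument and the justification you give does not quite close. You try to realize $h_*\mathcal I^V_y$ as a co-probe in some $\Sh(\T^n,\Lambda^V)$ with $\Lambda_c\subseteq\Lambda^V$ and then invoke \Cref{lem: probe to probe}. The problem is your proposed verification: you write that it ``reduces to checking that the exit-space description of co-probes from \Cref{prop: exit/ent sheaves} is preserved by $h_*$.'' But it is not preserved in the literal sense you need. The exit space $V_{\Exit}(\tilde y)$ is strictly smaller than $\R^n_{\Exit}(\tilde y)$ (it is the intersection with $\tilde V$, cf.\ the remark before \Cref{prop: exit/ent sheaves}), so $h_*\mathcal I^V_y$ is the pushforward of the constant sheaf on a \emph{proper} closed subset of the ambient exit space and is therefore not the co-probe $\mathcal I_y\in Sh_{S^c}(\T^n)$ on the nose. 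To make your scheme work you would still have to show that $h_*\mathcal I^V_y$ co-represents the stalk functor in whatever $\Sh(\T^n,\Lambda^V)$ you choose, and for an arbitrary $\Lambda^V\supseteq\Lambda_c$ there is no reason $h^*F$ lands in the $S^V$-constructible category where $\mathcal I^V_y$ is known to be a co-probe.

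The paper sidesteps all of this. Rather than introducing an auxiliary skeleton and appealing to \Cref{lem: probe to probe}, it verifies directly that $\rho^R_{\Lambda_c}\circ h_*(\mathcal I^V_{[a]})$ co-represents the stalk functor in $D(Sh_{S^c}(\T^n))$ by unwinding two adjunctions: first $(\text{inclusion},\rho^R_{\Lambda_c})$, then $(h^*,h_*)$. The key observation making this work is that for $F\in Sh_{S^c}(\T^n)$ one has $h^*F\in Sh_{S^V_c}(V)$ because $S^V_c$ is by definition the pullback stratification, so the co-probe property of $\mathcal I^V_{[a]}$ applies to $h^*F$ and gives $((h^*F)_{[a]})^*=(F_{[a]})^*$. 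Yoneda then forces $\rho^R_{\Lambda_c}\circ h_*(\mathcal I^V_{[a]})\cong\mathcal I_{[a]}$. This is both shorter and avoids the unconstructed $\Lambda^V$; I would recommend replacing your exit-space argument with this adjunction chain.
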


\begin{proof}
We need to compute the image of $\mathcal{I}_{[a]}$ under the composition $\rho^{R}_{\Lambda_{c}} \circ h_{*}$. We have:

\begin{align*}
 \Hom_{D(Sh_{S_c}(\T^n))}(F,\rho^{R}_{\Lambda_{c}} \circ h_{*}(\mathcal{I}_{[a]})) & \cong \Hom_{D^{b}_{c}(\T^n))}(F,h_{*}(\mathcal{I}_{[a]})) & \text{by adjunction} \\
& \cong \Hom_{D^{b}_{c}(\T^n)}(h^{*}F,\mathcal{I}_{[a]}) & \text{by adjunction} \\
& \cong \Hom_{D(Sh_{S^{V}_c}(V))}(h^{*}F,\mathcal{I}_{[a]}) & \text{by definition of $S^{V}_{c}$} \\
& \cong ((h^{*}F)_{[a]})^{*} \cong (F_{[a]})^{*} &\text{since $\mathcal I_{[a]}$ is the co-probe sheaf} \\
\end{align*}
Since co-representing the stalk functor determines the object up to a unique isomorphism, we obtain: 
\[
\rho^{R}_{\Lambda_{c}} \circ h_{*}(\mathcal{I}^{V}_{[a]}) \cong \mathcal{I}_{[a]} \in Sh_{S_{c}}(\T^n)
\]

Hence, we can compute $\rho^{R}_{\Lambda_c}\circ h_{*}(\mathcal{I}^{\bullet})$ by applying the functor termwise. 

\end{proof}

\begin{example}
In this example, we perform the steps above to compute the explicit minimal resolution of the cubic from \Cref{cubic}.

\[\begin{tikzpicture}[scale=1.2]
\draw[step=0.5cm,gray,very thin](-2.6,-1.5) grid (0.6,1.25);

\filldraw[fill=blue!10!white, draw=black] (-2,-1) -- (0,-1) -- (-2,1)--(-2,-1);

\filldraw[fill=red!10!white, draw=black,dashed] (-2,1)--(0,1)--(0,-1)--(-2,1);

\draw[black,dashed] (-2,-0.99) -- (0,-0.99);
\draw[black,dashed] (-1.99,1) -- (-1.99,-1);

\draw[black, thick](-2,-1)circle(2pt);
\filldraw[black, thick](-2,-1) circle (2pt);

\draw[black, thick](0,-1)circle(2pt);
\filldraw[white, thick](0,-1) circle (2pt);

\draw[black, thick](-2,1)circle(2pt);
\filldraw[white, thick](-2,1) circle (2pt);

\draw[red, very thick] (-1.93, 0.98) -- (0,0.332);
\draw[blue, very thick] (-2, 0.332) -- (-1,0);  
\draw[blue, very thick] (-2, -0.334) -- (-0.08,-0.97); 
\draw[red, very thick] (-1, 0) -- (0,-0.334);

\draw[blue, thick](-2, 0.332)circle(2pt);
\filldraw[blue, thick](-2, 0.332) circle (2pt);

\draw[blue, thick](-2, -0.334)circle(2pt);
\filldraw[blue, thick](-2, -0.334) circle (2pt);
\draw[blue, thick](-1, 0)circle(2pt);
\filldraw[blue, thick](-1, 0) circle (2pt);

\draw (-1.2,-0.35) node{$1$};
\draw (-1.5,0.5) node{$1'$};
\draw (-1.85,-0.8) node{$0$};
\draw (-0.5,0.1) node{$2$};
\draw (-0.5,0.8) node{$2'$};

\end{tikzpicture}
\]
The corresponding exit paths algebra $A^{V}_{cExit}$ is given below: 
\[\begin{tikzcd}
	{} & \bullet2 & {\bullet 2'} \\
	& {\bullet 1} & {\bullet 1'} \\
	&& {\bullet 0}
	\arrow["p_{21}",from=1-2, to=2-2]
	\arrow["p_{21'}", from=1-2, to=2-3]
	\arrow["p_{2'1'}", from=1-3, to=2-3]
	\arrow["p_{2'0}", bend left, from=1-3, to=3-3]
	\arrow["p_{10}",from=2-2, to=3-3]
\end{tikzcd}\]

Running the algorithm in \cite{MPR} on the constant representation, we obtain that the minimal projective resolution  in $\RMod A^{V}_{cExit}$ is

\[\begin{tikzpicture}

          \node (v0) at (-9,1.5) {$ P_{2'} \oplus P_{2}$};
          \node (v1) at (-3,1.5) {$P_{1'}\oplus P_{0}$};
          \node(v2) at (0,1.5) {$K$};

            \draw[->]  (v0) edge node[above]{$\begin{pmatrix} p_{2'1'} &  p_{21'}  \\ -p_{2'0} & -p_{10} p_{21} \end{pmatrix}$}
            (v1);
            \draw[->] (v1) edge node[above]{$\begin{pmatrix} 1 &  1\end{pmatrix}$}(v2);
\end{tikzpicture}
\]
Applying $S \circ F \circ \rho^{R}_{\Lambda_c} \circ h_{*}\circ F^{-1}_{V} \circ \D$ to this complex yields a resolution of $M_{\Sigma}$ in $\RMod A^{c}_{Ent}$:

\[\begin{tikzpicture}

          \node (v0) at (-7,1.5) {$ P_{2} \oplus P_{2}$};
          \node (v1) at (-3,1.5) {$P_{1}\oplus P_{0}$};

            \draw[->]  (v0) edge node[above]{$\begin{pmatrix} c' &  a'  \\ -b a' & -cc' \end{pmatrix}$}
            (v1);
\end{tikzpicture}
\]

\[
\begin{tikzpicture}[scale=1.5]

\draw[step=0.5cm,gray,very thin](-2.6,-1.5) grid (0.6,1.25);

\filldraw[fill=blue!10!white, draw=black] (-2,-1) -- (0,-1) -- (-2,1)--(-2,-1);

\filldraw[fill=red!10!white, draw=black,dashed] (-2,1)--(0,1)--(0,-1)--(-2,1);

\draw[black, thick](-2,-1)circle(2pt);
\filldraw[black, thick](-2,-1) circle (2pt);

\draw[black, thick](0,-1)circle(2pt);
\filldraw[white, thick](0,-1) circle (2pt);

\draw[black, thick](-2,1)circle(2pt);
\filldraw[white, thick](-2,1) circle (2pt);

\draw (-2.2,-1.2) node{$0$};
\draw (-1.3,-0.1) node{$1$};
\draw (-0.5,0.7) node{$2$};

\draw[black, thick](-1.4,-0.4)circle(1.5pt);
\filldraw[black, thick](-1.4,-0.4) circle (1.5pt);

\draw[black, thick](-0.6,0.5)circle(1.5pt);
\filldraw[black, thick](-0.6,0.5) circle (1.5pt);

\draw[->, orange, thick] (-2,-1) -- (-1.44,-0.44);

\draw[->,orange,thick] (0,-1) -- (-1.33,-0.4);
\draw[->,orange,thick] (-2,1) -- (-1.38,-0.34);
\draw[->, violet, thick] (-1.4,-0.4) -- (-2,0);
\draw[->, violet, thick] (0,0) -- (-0.57,0.5);
\draw[->, violet, thick] (-1.4,-0.4) -- (-1,-1);
\draw[->, violet, thick] (-1,1) -- (-0.62,0.52);
\draw[->, violet, thick] (-1.4,-0.4) -- (-0.62,0.46); 

\draw (-1.7,-0.55) node{$a$};
\draw (-1.5,0.2) node{$b$};
\draw (-0.85,-0.5) node{$c$};

\draw (-1,0.25) node{$a'$};
\draw (-0.15,0.37) node{$c'$};
\draw (-0.67,0.82) node{$b'$};

    \end{tikzpicture}
\]

Applying $-\otimes_{A}{T}$ to this complex, we obtain the projective resolution we constructed in \eqref{min resolution of the cubic}.

\[
\begin{tikzpicture}
            \node (v0) at (-7,1.5) {$ \mathcal{O}(-2)^{2}$};
            \node (v1) at (-3,1.5) {$\mathcal{O}(-1)\oplus \mathcal{O}$};
            \node (v2) at (0,1.5) {$N$};
            \draw[->]  (v0) edge node[above]{$\begin{pmatrix} x_1 &  x_0  \\ -x_0x_2 & -x^2_1 \end{pmatrix}$}
            (v1);
            \draw[->]  (v1) edge node[above]{$\begin{pmatrix} t &  1   \end{pmatrix}$}
            (v2);
\end{tikzpicture}
\]
\end{example}

\subsection{Explicit CW Resolutions of Pushforward Sheaves}

If one is unhappy with the need to run an algorithm to obtain an explicit answer, we can sacrifice minimality for the sake of having a more universal description of the differentials of the complex. This complex recovers the one obtained by Hanlon, Hicks and Lazarev \cite{HHL}. 

Recall that the starting point in producing an explicit minimal resolution of $M_{\Sigma}$ was constructing a minimal projective resolution of the constant representation in the exit paths algebra of the fiber $V$, which after applying $F^{-1}_{V} \circ \D$, equivalently, is a minimal injective resolution of the constant sheaf shifted by the dimension of $V$. In this section, we show how to produce another injective resolution of the constant sheaf on $V$, which comes from the CW stratification of $V$. This resolution is not minimal; however, it is cellular and hence can be described by a closed form differential.

We first present an explicit injective resolution of the constant sheaf in $Sh_{S^V_{cw}}(V)$ with the summands of the $i^{th}$ term indexed by the $i$-cells.  To begin, pick a basis for the vector space $v^{-1}(0) \subset \R^n$. This endows every top dimensional cell of $\widetilde{V}$ with an orientation. Since $\widetilde{V}$ is regular, the degree of every attaching map is $\pm1$.  Every exit path in $V$ lifts to a unique attaching map $\partial$ in the universal cover, allowing one to construct the following complex.

\begin{proposition}\label{CW Injective resolution}
Consider the injective sheaf
\[
\mathcal{F}_t   := \bigoplus_{e_i \in T_{V} \ dim(e_i)  = t } \mathcal{I}_{e_{i}}
\]
and define maps
\begin{align*}
d_t:  \mathcal F_{t} & \to \mathcal F_{t-1}\\
 \mathcal{I}_{e_{i}}  & \xmapsto{ \sum(-1)^{\text{deg}(\widetilde{\partial_{ij}})} {\partial_{ij}}}  \mathcal{I}_{e_{j}} 
    \end{align*}
    and
\begin{align*}
d_{k+1}: \C_V & \to  \mathcal F_0\\
 \C_V  & \xto{\partial{e_i}} \mathcal{I}_{e_{i}}
\end{align*}
where $\partial$ is the canonical restriction maps. 
Then,  
\begin{equation}
0 \rightarrow \C_{V}  \xto{d_{k+1}} \mathcal{F}_{k}   \xto{d_{k}} \mathcal{F}_{k-1} \xto{d_{k-1}} \cdots \mathcal{F}_{1} \xto{d_{1}} \mathcal{F}_{0} \xto{} 0 \label{eq: injective complex}
\end{equation}
is an acyclic complex.   
\end{proposition}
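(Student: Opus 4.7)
The plan is to check that \eqref{eq: injective complex} is genuinely a chain complex and then verify acyclicity stalk-wise. For $d_t\circ d_{t+1}=0$, the key input is that the lift $\widetilde V$ inherits a regular CW structure by the same argument as \Cref{prop: R^n regular CW}. Consequently each attaching map has degree $\pm 1$, and the signs $(-1)^{\deg(\widetilde\partial_{ij})}$ induced from the chosen basis of $v^{-1}(0)$ coincide with a consistent cellular differential on $C_*(\widetilde V)$. Since exit paths $\partial_{ij}$ from a cell $e_i$ in $V$ to its facet $e_j$ are enumerated by lifted attaching maps in $\widetilde V$, the vanishing of $d^2$ reduces to the standard vanishing of the squared cellular boundary. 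The condition $d_k\circ d_{k+1}=0$ for the augmentation $\C_V\to \mathcal{F}_k$ is analogous: the signed sum of canonical restrictions through pairs (top cell, its facet) cancels termwise on lifts.

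Next, I would check acyclicity at each stalk. Fix $x\in V$ lying in a cell $e_j$ of dimension $j$, and choose a lift $\widetilde{e_j}\subset \widetilde V$. By \Cref{prop: exit/ent sheaves} together with \Cref{entrance and exit for CW}, the co-probe $\mathcal{I}_{e_i}$ is $(\pi\circ i_{\widetilde{e_i}})_*\C_{\overline{\widetilde{e_i}}}$, so $(\mathcal{I}_{e_i})_x$ is the free $\C$-vector space on the set of lifts $\widetilde e$ of $e_i$ whose closure contains $\widetilde{e_j}$. Assembling over $e_i$, the stalk of the augmented complex becomes
\[
0\to \C\to \bigoplus_{\substack{\widetilde e\subset \widetilde V\\ \dim \widetilde e=k,\ \widetilde{e_j}\leq \widetilde e}}\C\to \cdots \to \bigoplus_{\substack{\widetilde e\subset \widetilde V\\ \dim\widetilde e=j,\ \widetilde{e_j}\leq \widetilde e}}\C\to 0,
\]
and under the chosen sign identification its non-augmented portion is precisely the relative cellular chain complex $C_*(\overline{\mathrm{St}(\widetilde{e_j})},\partial\overline{\mathrm{St}(\widetilde{e_j})})$ in the regular CW complex $\widetilde V$.

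Finally, since the cells of $\widetilde V$ are convex open subsets of an affine subspace of Euclidean space, the open star $\mathrm{St}(\widetilde{e_j})$ is a contractible open subset of the manifold $\widetilde V$, and the closed star $\overline{\mathrm{St}(\widetilde{e_j})}$ is star-shaped with boundary the link of $\widetilde{e_j}$. A Borel--Moore/excision computation gives $H_*(\overline{\mathrm{St}(\widetilde{e_j})},\partial\overline{\mathrm{St}(\widetilde{e_j})})\cong H^{BM}_*(\mathrm{St}(\widetilde{e_j}))$, which is $\C$ in top degree $k$ (the fundamental class) and zero otherwise. The canonical restriction $\C_V\to \mathcal{F}_k$ sends $1$ to this generator, so the augmented stalk complex is exact, which gives the acyclicity of \eqref{eq: injective complex}. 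The main obstacle will be the sign/orientation bookkeeping required to simultaneously ensure $d^2=0$ and that the augmentation hits the fundamental class; both boil down to consistently orienting cells in $\widetilde V$ from the fixed basis of $v^{-1}(0)$ and verifying that these orientations descend coherently to $V$.
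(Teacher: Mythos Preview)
Your approach is essentially the same as the paper's: both verify the complex condition and acyclicity by passing to stalks and identifying the result with a cellular chain complex on the regular CW cover $\widetilde V$. The paper phrases the stalk complex as the cellular relative homology of the entrance space of a lift $\widetilde x$ relative to its boundary, which is exactly your $C_*(\overline{\mathrm{St}(\widetilde{e_j})},\partial\overline{\mathrm{St}(\widetilde{e_j})})$ since the entrance space for the CW stratification is the open star; and both conclude via the fact that this is an open $k$-ball. Your stalk computation is in fact slightly more careful than the paper's: the paper writes $(\mathcal I_{e_i})_x=\C$ or $0$, which tacitly assumes at most one lift of $e_j$ lies in $\overline{\widetilde{e_i}}$, whereas your formulation in terms of lifts handles the general (non-regular) case on $V$ correctly while still reducing to the same cellular complex on $\widetilde V$. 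The only place where the paper does something you do not is the explicit geometric check that each codimension-one cell of $\widetilde V$ lies in the closure of exactly two top cells with opposite induced orientations, which gives $d_k\circ d_{k+1}=0$ directly; your reduction to the vanishing of the squared cellular boundary on $\widetilde V$ accomplishes the same thing.
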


\begin{proof}
First, we verify that $d_{k} \circ d_{k+1} = 0$.  We check on a summand of $\F_{k-1}$:
\begin{align*}
d_{k-1} \circ {d_k}_{\big|\mathcal{I}_{e_{j}}}
& = \sum_{e_{i}, e_{j} \leq e_{i}}\sum(-1)^{\text{deg}(\widetilde{\partial_{ij}})} {\partial_{ij}}\circ\partial_{e_i}
\end{align*}

We first show that for every $k-1$ dimensional cell in $\widetilde{V}$ there are exactly two $k$-dimensional cells whose closure contains it. Since the cells are homeomorphic to the intersections of $\widetilde{V}$ with the cells of $\R^n$ that are locally given by linear subspaces, each $k-1$-dimensional cell is locally given by a hyperplane in $\widetilde{V}$. Take a $k-1$-dimensional cell $\widetilde{e_i}$. Its associated hyperplane divides $\widetilde{V}$ into 2 open sets. Pick an arbitrary cell $\widetilde{e_j}$ whose closure contains $\widetilde{e_i}$. Since $\widetilde{e_i}$ is $k-1$-dimensional, $\widetilde{e_j}$ must be $k$-dimensional. It must contain a point in either of the two halves divided by the hyperplane of $\widetilde{e_i}$ because otherwise it would not be $k$-dimensional. Since the strata are open and convex, $\widetilde{e_j}$ lies entirely in one of the two halves. Since the stratification satisfies the axiom of frontier, the closure of $\widetilde{e_j}$ contains $\widetilde{e_i}$. Suppose there is another open cell $\widetilde{e_j'}$ whose closure contains $\widetilde{e_j}$. Then it must lie entirely in one of the two halves, but if it lies in the same half as $\widetilde{e_j}$, we get a contradiction on disjointness of the cells since from the same point in the boundary, we get two half-balls which non-empty intersection is contained in both cells.
 
Hence, the restricted differential is just a signed sum of two restriction maps. Because every top cell is uniformly oriented by the chosen basis of $\widetilde V$, the degree of the two boundary maps have the opposite sign, which means that the sum is 0. Hence, $d_{k} \circ d_{k+1} = 0$. We show that the remaining differentials compose to 0 and that the complex is exact by checking on stalks.  

Consider $x \in e_j \subset V$.   Then 
\[
(\mathcal{I}_{e_i})_x = \begin{cases}
 \C  & \text{if } e_j \subset \overline{e_i} \\
 0 & \text{otherwise}
\end{cases}
\]
The sequence of maps 
\begin{equation} \label{eq: stalk at x}
(\cF_k)_x \to ... \to (\cF_0)_x 
\end{equation} is precisely the cellular relative homology complex of the entrance space of a lift $\widetilde x \in \widetilde V$ relative to its closure. 
In particular, it is a complex (at $x$ for all $x$).  Furthermore, since the entrance spaces are homeomorphic to an open $k$-dimensional ball and its closure is a closed $k$-ball, the complex \eqref{eq: stalk at x} only has top homology.  This means that \eqref{eq: injective complex} is exact at $x$ for all $x$, as desired.
\end{proof}

We now show that the explicit minimal injective resolution of the constant sheaf in $S^{V}_{cw}$ produces an explicit (non-minimal) injective resolution of the constant sheaf in $S^{V}_{c}(V)$. Then, we can proceed by applying a sequence of functors as in \Cref{subsec: explicit minimal resolutions}, to obtain a non-minimal resolution of $u_{*}\mathcal{O}_{X_{\Sigma_1}}$by line bundles. 

\begin{proposition}
Consider the following pair of adjoint functors: 
\[
\begin{tikzcd}
D(Sh_{S^{V}_{cw}}(V))\ar[r, bend right, "\rho^{R}_{\Lambda_c} \circ i_{*}"'] & D(Sh_{S^{V}_c}(V)) \ar[l, bend right, "i^{*}"']
\end{tikzcd}
\]
where $i$ is the tautological inclusion. Given an injective resolution of the constant sheaf $\mathcal{I}^{\bullet} \in Sh_{S^{V}_{cw}}(V)$, $\rho^{R}_{\Lambda_c} \circ i_{*}(\mathcal{I}^{\bullet})$ is an injective resolution of the constant sheaf in $Sh_{S^{V}_c}(V)$, computed using the formula:
\[
\rho^{R}_{\Lambda_c} \circ i_{*}(\mathcal I_{e_{i}}) \cong \mathcal I_{\phi(e_{i})}.
\]
\end{proposition}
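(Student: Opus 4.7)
The plan is to split the claim into two parts: establish the co-probe formula $\rho^R_{\Lambda_c} \circ i_*(\mathcal I_{e_i}) \cong \mathcal I_{\phi(e_i)}$ via the universal property of co-probe sheaves, and then deduce that applying $\rho^R_{\Lambda_c} \circ i_*$ termwise to the injective resolution $\mathcal I^\bullet$ yields an injective resolution of $\C_V$ in $Sh_{S^V_c}(V)$.

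For the first part, I would mimic the computation in \Cref{prop: explicit minimal resolution}. Explicitly, for any $F \in Sh_{S^V_c}(V)$ and any point $y \in e_i$, the adjunction together with the defining property of the co-probe gives
\[
\Hom(F, \rho^R_{\Lambda_c} \circ i_*(\mathcal I_{e_i})) \cong \Hom(i^* F, \mathcal I_{e_i}) \cong (i^* F)_y^*.
\]
Since $i^*$ is the tautological full-subcategory inclusion of $S^V_c$-constructible sheaves into $S^V_{cw}$-constructible sheaves (well-defined because the CW stratification refines the cube stratification), the stalk $(i^* F)_y$ agrees with $F_y = F_{\phi(e_i)}$, where $\phi$ is the order-preserving map (analogous to the one in \Cref{cellcubeposetmap}) sending a CW cell of $V$ to the connected component of the cube stratum containing it. Hence $\Hom(F, \rho^R_{\Lambda_c} \circ i_*(\mathcal I_{e_i})) \cong F_{\phi(e_i)}^*$, and by the Yoneda lemma $\rho^R_{\Lambda_c} \circ i_*(\mathcal I_{e_i})$ co-represents the stalk functor at $\phi(e_i)$ in $Sh_{S^V_c}(V)$, so it must be isomorphic to $\mathcal I_{\phi(e_i)}$.

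For the second part, the key observation is that since $i^*$ is fully faithful, the unit $\id \to (\rho^R_{\Lambda_c} \circ i_*) \circ i^*$ of the adjunction is an isomorphism. Applying this to $\C_V \in Sh_{S^V_c}(V)$, and noting that $i^*(\C_V) = \C_V \in Sh_{S^V_{cw}}(V)$, yields $(\rho^R_{\Lambda_c} \circ i_*)(\C_V) \cong \C_V$. Since the terms $\mathcal I_{e_i}$ of $\mathcal I^\bullet$ are injective in $Sh_{S^V_{cw}}(V)$ and $\rho^R_{\Lambda_c} \circ i_*$ is a left exact right adjoint, applying this functor termwise to $\mathcal I^\bullet$ computes $R(\rho^R_{\Lambda_c} \circ i_*)(\C_V) \cong \C_V$ in $D(Sh_{S^V_c}(V))$. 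By the first part the resulting complex has injective co-probe terms $\mathcal I_{\phi(e_i)}$ and is quasi-isomorphic to $\C_V$, giving the desired injective resolution. The main obstacle I anticipate is carefully justifying that termwise application of $\rho^R_{\Lambda_c} \circ i_*$ actually computes the derived functor; this reduces to the acyclicity of the co-probes $\mathcal I_{e_i}$ for $\rho^R_{\Lambda_c} \circ i_*$, which ultimately hinges on \Cref{lem: probe to probe} combined with co-probes being injective in the abelian category.
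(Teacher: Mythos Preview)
Your proposal is correct and follows essentially the same approach as the paper. The paper's proof is very terse: it uses the identical unit-of-adjunction argument (``$i^*$ is fully faithful, so $\C_V \cong (\rho^R_{\Lambda_c} \circ i_*) \circ i^*(\C_V)$'') and then just asserts that ``right wrapping takes co-probes to co-probes'' for the formula; you supply the explicit Yoneda/stalk computation for the latter (modeled on \Cref{prop: explicit minimal resolution}) and also address the termwise-application issue that the paper leaves implicit.
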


\begin{proof}
We have:
\begin{align*}
& \C_{V} \cong (\rho^{R}_{\Lambda_c} \circ i_{*}) \circ i^{*} (\C_{V}) & \text{$i^{*}$ is fully faithful} \\ 
& \cong \rho^{R}_{\Lambda_c} \circ i_{*} (\C_{V}) \\
\end{align*}
Since the right wrapping takes co-probes to co-probes, the claim follows. 

\end{proof}

\begin{example} In this example, we construct an explicit resolution of the torus-fixed point in $\P^2$. The choice of the differential and the terms is coming from the cellular resolution of the constant sheaf on $V = \T^n$.
\[
    \begin{tikzpicture}[scale=1.75]

\filldraw[black, thick](-2.25,-1.25)circle(2pt);

\filldraw[fill=blue!10!white, draw=black,dashed] (-2,-1) -- (0,-1) -- (-2,1)--(-2,-1);

\draw[blue!40!white,thick] (-2.25,-1) -- (-2.25,1);
\draw[->,blue!40!white,thick] (-2.25,0) -- (-2.25,0.25);

\draw[black, thick](-2.25,-1)circle(2pt);
\filldraw[white, thick](-2.25,-1) circle (1.5pt);

\draw[black, thick](-2.25,1)circle(2pt);
\filldraw[white, thick](-2.25,1) circle (1.5pt);

\draw[blue!40!white,thick] (0.25,-1) -- (-2,1.25);
\draw[->,blue!40!white,thick] (-0.875,0.125) -- (-0.878,0.128);

\draw[black, thick](0.25,-1)circle(2pt);
\filldraw[white, thick](0.25,-1) circle (1.5pt);

\draw[black, thick](-2,1.25)circle(2pt);
\filldraw[white, thick](-2,1.25) circle (1.5pt);

\draw[blue!40!white,thick] (0,-1.25) -- (-2,-1.25);
\draw[->,blue!40!white,thick] (-1,-1.25) -- (-1.2,-1.25);

\draw[black, thick](0,-1.25)circle(2pt);
\filldraw[white, thick](0,-1.25) circle (1.5pt);

\draw[black, thick](-2,-1.25)circle(2pt);
\filldraw[white, thick](-2,-1.25) circle (1.5pt);

\filldraw[fill=red!10!white, draw=black,dashed] (-1.75,1.25)--(0.25,1.25)--(0.25,-0.75)--(-1.75,1.25);

\draw (-2.5,-1.5) node{$S^{cw}_{(0,e_{\o})}$};
\draw (-1.3,-0.35) node{$S^{cw}_{(1,e_{123})}$};
\draw (-2.5,-0.2) node{$S^{cw}_{(1,e_{13})}$};
\draw (-0.5,0) node{$S^{cw}_{(1,e_{12})}$};
\draw (-0.83,-1.4) node{$S^{cw}_{(1,e_{23})}$};

\draw (-0.2,0.5) node{$S^{cw}_{(2,e_{123})}$};

    \end{tikzpicture}
\]

\begin{enumerate}
    \item The universal cover of $\T^2$, $\R^2$, is assigned a clockwise orientation, orienting the two top-dimensional strata clockwise. The other strata are oriented arbitrarily.  
    \item We choose the lifts for the top dimensional strata so that they lie in the same fundamental domain. We choose $((0,0,-1), e_{123})$ to be the lift for $(1,e_{123})$ and $((0,0,-2), e_{123})$ to be the lift for $(2,e_{123})$.
    \item We now consider all exit paths from the two dimensional strata to the one dimensional strata. Take the exit path from $S^{cw}_{(1,e_{123})}$ to  $S^{cw}_{(1,e_{12})}$. This exit path lifts to the attaching map  $e_{(0,0,-1),{12}} \subset \overline{e_{(0,0,-1),{123}}}$. Since $(0,0,-1) - (0,0,-1) = 0$, the corresponding morphism of line bundles is the identity morpshism. Since the induced orientation on the boundary is different from the chosen orientation, the degree of the attaching map is $-1$. The other exit paths for the blue triangle also give the identity morpshisms but with the degree $1$ since the orientation agrees. Consider the exit path from $S^{cw}_{(2,e_{123})}$ to $S^{cw}_{(1,e_{23})}$. We get a unique lift for $(1,e_{23})$, $((1,0,-2), e_{23})$ and the corresponding attaching map. We have $(1,0,-2) - (0,0,-2) = (1,0,0) = x_0$ giving us a morphism of line bundles $\mathcal{O}(-2) \xrightarrow{-x_0}\mathcal{O}(-1)$. For $S^{cw}_{(1,e_{12})}$, the lift is $((0,0,-1),e_{12})$, so that the morphism is given by the monomial $x_2$. For $S^{cw}_{(1,e_{13})}$, the lift is $((0,1,-2),e_{12})$, so that the morphism is given by $x_1$.
    \item We have constructed the first two terms of the resolution:

    \begin{tikzpicture}
            \node (v0) at (-10,1.5) {$\mathcal{O}(-1) \oplus \mathcal{O}(-2)$};
            \node (v1) at (-5,1.5) {$\mathcal{O}(-1)^3$};
            \draw[->]  (v0) edge node[above]{$\begin{pmatrix} - 1 & x_2  \\ 1 & -x_0 \\ 1 & -x_1 \end{pmatrix}$}
            (v1);
      \end{tikzpicture}
   \item We now study the exit paths from the one dimensional strata to $S^{cw}_{(0,e_{\o})}$. Consider the positively signed exit path from $S^{cw}_{(1,e_{12})}$ to $S^{cw}_{(0,e_{\o})}$, which lifts $(0,e_{\o})$ to $((1,0,-1),e_{\o})$. We get a monomial $(1,0,-1) - (0,0,-1) = (1,0,0) = x_0$. For the other exit path, the lift for $(0,e_{\o})$ is $((0,1,-1),e_{\o})$, giving the monomial $-x_1$. Repeating the same procedure for the remaining strata, we compete our resolution:
   \begin{tikzpicture}
            \node (v0) at (-10,1.5) {$\mathcal{O}(-1) \oplus \mathcal{O}(-2)$};
            \node (v1) at (-5,1.5) {$\mathcal{O}(-1)^3$};
            \node(v2) at (1.5,1.5)
            {$\mathcal{O}$};
            
            \draw[->]  (v0) edge node[above]{$\begin{pmatrix} - 1 & x_2  \\ 1 & -x_0 \\ 1 & -x_1 \end{pmatrix}$}
            (v1);
            \draw[->]  (v1) edge node[above]{$\begin{pmatrix} x_0 - x_1 & x_2 - x_1   & x_0 - x_2 \end{pmatrix}$}
            (v2);
            
      \end{tikzpicture}

\end{enumerate}
\end{example}

\begin{example}
Consider the cubic from \Cref{cubic}. The induced CW stratification of $\T^1$ is depicted below.
\[
\begin{tikzpicture}[scale=1.2]
\draw[step=0.5cm,gray,very thin](-2.6,-1.5) grid (0.6,1.25);

\filldraw[fill=blue!10!white, draw=black] (-2,-1) -- (0,-1) -- (-2,1)--(-2,-1);

\filldraw[fill=red!10!white, draw=black,dashed] (-2,1)--(0,1)--(0,-1)--(-2,1);

\draw[black,dashed] (-2,-0.99) -- (0,-0.99);
\draw[black,dashed] (-1.99,1) -- (-1.99,-1);

\draw[black, thick](-2,-1)circle(2pt);
\filldraw[black, thick](-2,-1) circle (2pt);

\draw[black, thick](0,-1)circle(2pt);
\filldraw[white, thick](0,-1) circle (2pt);

\draw[black, thick](-2,1)circle(2pt);
\filldraw[white, thick](-2,1) circle (2pt);

\draw[black,thick] (-2, 1) -- (0,0.332);
\draw[black,thick] (-2, 0.332) -- (0,-0.334);  
\draw[black,thick] (-2, -0.334) -- (0,-1);

\draw[black, thick](-1,-3)circle(25pt);

\draw[black, thick](-1,-3.88)circle(2pt);
\filldraw[black, thick](-1,-3.88) circle (2pt);
\draw[black, thick](-1,-2.12)circle(2pt);
\filldraw[black, thick](-1,-2.12) circle (2pt);
\draw[black, thick](-1.88,-3)circle(2pt);
\filldraw[black, thick](-1.88,-3) circle (2pt);
\draw[black, thick](-0.12,-3)circle(2pt);
\filldraw[black, thick](-0.12,-3) circle (2pt);

\draw (-1,-1.85) node{$e_0$};
\draw (-1,-4.1) node{$e_2$};
\draw (0.15,-3) node{$e_1$};
\draw (-2.1,-3) node{$e_3$};

\draw (-0.2,-2.2) node{$e_4$};
\draw (-0.2,-3.8) node{$e_5$};
\draw (-1.8,-3.8) node{$e_6$};
\draw (-1.8,-2.2) node{$e_7$};

\draw (-2.2,-1.2) node{$e_0$};
\draw[->,black,thick] (0,-1) -- (-0.8,-0.73);
\draw[->,black,thick] (-1,-2.12) -- (-0.74,-2.16);

    \end{tikzpicture}
\]
Putting a clockwise orientation, we can construct an injective resolution of the constant sheaf on $V = \T^1$:

\[
 \begin{tikzpicture}
            \node (v0) at (-10,1.5) {$\C_{\T^1}$};
            \node (v1) at (-6,1.5) {$\mathcal{I}_{e_4}\oplus\mathcal{I}_{e_5}\oplus\mathcal{I}_{e_6}\oplus\mathcal{I}_{e_7}$};
            \node(v2) at (3,1.5){$\mathcal{I}_{e_0}\oplus\mathcal{I}_{e_1}\oplus\mathcal{I}_{e_2}\oplus\mathcal{I}_{e_3}$};
            \node(v3) at (5.5,1.5){0};
            
            \draw[->]  (v0) edge node[above]{$\begin{pmatrix} 1 & 1 & 1 & 1 \\ \end{pmatrix}$} (v1);
            
            \draw[->]  (v1) edge node[above]{$\begin{pmatrix} -\partial_{40} & 0 & 0 & \partial_{70} \\\partial_{41} & -\partial_{51} & 0 & 0\\0 & \partial_{52} & -\partial_{62} & 0 \\ 0 & 0 & \partial_{63} & -\partial_{73} \end{pmatrix}$}(v2);
            \draw[->] (v2) edge (v3);

      \end{tikzpicture}
\]
Tracing this complex to $D(A^c_{Ent})$, we obtain:
\[
 \begin{tikzpicture}
            \node (v0) at (-7,1.5) {$P_{[1]}\oplus P_{[2]}\oplus P_{[1]}\oplus P_{[2]}$};
            \node(v1) at (3,1.5){$P_{[0]}\oplus P_{[1]}\oplus P_{[1]}\oplus P_{[1]}$};
            \node(v2) at (5.5,1.5){0};
            
            \draw[->]  (v0) edge node[above]{$\begin{pmatrix} -c & 0 & 0 & ba' \\1 & -c' & 0 & 0\\0 & b' & -1 & 0 \\ 0 & 0 & 1 & -c' \end{pmatrix}$}(v1);
            \draw[->] (v1) edge (v2);
      \end{tikzpicture}
\]
Applying ${-}\otimes_{A}{T}$ produces an explicit complex of line bundles quasi-isomorphic to the normalization of the cubic:  
\[
 \begin{tikzpicture}
            \node (v0) at (-7,1.5) {$\mathcal{O}(-1)\oplus\mathcal{O}(-2)\oplus\mathcal{O}(-1)\oplus\mathcal{O}(-2)$};
            \node(v1) at (3,1.5){$\mathcal{O}\oplus\mathcal{O}(-1)\oplus\mathcal{O}(-1)\oplus\mathcal{O}(-1)$};

            \draw[->]  (v0) edge node[above]{$\begin{pmatrix} -x_1 & 0 & 0 & x_0x_2 \\1 & -x_1 & 0 & 0\\0 & x_0 & -1 & 0 \\ 0 & 0 & 1 & -x_1 \end{pmatrix}$}(v1);
      \end{tikzpicture}
\]
One checks that the resolution is given by:
\[
 \begin{tikzpicture}
            \node (v0) at (-8,1.5) {$\mathcal{O}(-1)\oplus\mathcal{O}(-2)\oplus\mathcal{O}(-1)\oplus\mathcal{O}(-2)$};
            \node(v1) at (1,1.5){$\mathcal{O}\oplus\mathcal{O}(-1)\oplus\mathcal{O}(-1)\oplus\mathcal{O}(-1)$};
            \node(v2) at (6,1.5){$N$};
            
            \draw[->]  (v0) edge node[above]{$\begin{pmatrix} -x_1 & 0 & 0 & x_0x_2 \\1 & -x_1 & 0 & 0\\0 & x_0 & -1 & 0 \\ 0 & 0 & 1 & -x_1 \end{pmatrix}$}(v1);
            \draw[->] (v1) edge node[above]{$\begin{pmatrix} 1 & x_1 & t & t \end{pmatrix}$}(v2); 
      \end{tikzpicture}
\]
\end{example}


\begin{thebibliography}{9}

\bibitem[BES17]{BES}
Christine Berkesch, Daniel Erman and Gregory G. Smith.
\newblock Virtual Resolutions for a Product of Projective Spaces, 2017,
\newblock Algebraic Geometry, 7 (2020) no. 4, 460-481;
\newblock arXiv:1703.07631.
\newblock DOI: 10.14231/AG-2020-013.

\bibitem[FLTZ11]{FLTZ} Bohan Fang, Chiu-Chu Melissa Liu, David Treumann, Eric Zaslow. A categorification of Morelli's theorem. 	Invent. Math. 186 (2011), no.1, 79-114

\bibitem[Bon06]{B}
Alexey Bondal
\newblock Derived categories of toric varieties
\newblock Oberwolfach Rep. 3 (2006), 284-286.

\bibitem[BHS21]{BHS}
Juliette Bruce, Lauren Cranton Heller and Mahrud Sayrafi.
\newblock Characterizing Multigraded Regularity and Virtual Resolutions on Products of Projective Spaces, 2021;
\newblock arXiv:2110.10705.

\bibitem[Cur13]{Cur}
Justin Curry.
\newblock Sheaves, Cosheaves and Applications, 2013;
\newblock arXiv:1303.3255.

\bibitem[FH22]{HPA}
David Favero and Jesse Huang. Homotopy Path Algebras, 2022; arXiv:2205.03730.

\bibitem[FH23]{Rouquier}
David Favero and Jesse Huang.
\newblock Rouquier dimension is Krull dimension for normal toric varieties, 2023,
\newblock Eur. J. Math. 9, 91 (2023);
\newblock arXiv:2302.09158.
\newblock DOI: 10.1007/s40879-023-00686-1.

\bibitem[Ful93]{Fu}
W. Fulton, Introduction to toric varieties, Annals of Mathematics Studies 131, Princeton University Press, 1993.

\bibitem[GKZ94]{GKZ}
Izrail Moiseevich Gelfand and Mikhail Kapranov and Andrey V. Zelevinsky
\newblock Discriminants, Resultants, and Multidimensional Determinants;
\newblock 1994.

\bibitem[GS07]{MPR}
Edward L. Green and Øyvind Solberg.
\newblock An algorithmic approach to resolutions, 2005,
\newblock Journal of Symbolic Computation, vol. 42, 11-12 (2007) 1012-1033,
  Non-commutative Grobner bases and applications;
\newblock arXiv:math/0509020.
\newblock DOI: 10.1016/j.jsc.2007.05.002.

\bibitem[HHL23]{HHL}
Andrew Hanlon, Jeff Hicks and Oleg Lazarev.
\newblock Resolutions of toric subvarieties by line bundles and applications, 2023;
\newblock arXiv:2303.03763.

\bibitem[HZ22]{HZ}
Jesse Huang and Peng Zhou.
\newblock Variation of GIT and Variation of Lagrangian Skeletons II: Quasi-Symmetric Case, 2020,
\newblock Adv. Math. (2022);
\newblock arXiv:2011.06114.

\bibitem[Kuo21]{Kuo}
Christopher Kuo.
\newblock Wrapped sheaves, 2021,
\newblock Advances in Mathematics 415 (2023): 108882;
\newblock arXiv:2102.06791.
\newblock DOI: 10.1016/j.aim.2023.108882.

\bibitem[Kuw20]{Kuw}
Tatsuki Kuwagaki.
\newblock The nonequivariant coherent-constructible correspondence for toric stacks, 2016,
\newblock Duke Math. J. 169, no. 11 (2020), 2125-2197;
\newblock arXiv:1610.03214.
\newblock DOI: 10.1215/00127094-2020-0011.

\bibitem[Tho00]{Th}
Jesper Funch Thomsen. 
\newblock Frobenius direct images of line bundles on toric varieties 
\newblock Journal of Algebra 226.2 (2000), pp. 865–874.

\bibitem[Tre10]{Tr} David Treumann, Remarks on the nonequivariant coherent-constructible correspondence for toric varieties, arXiv preprint arXiv:1006.5756 (2010).



\bibitem[Zho17]{Zh}
Peng Zhou.
\newblock Twisted Polytope Sheaves and Coherent-Constructible Correspondence for Toric Varieties, 2017;
\newblock arXiv:1701.00689.

\end{thebibliography}
\end{document}